\theoremstyle{plain}
\newtheorem{lemma}{Lemma}
\newtheorem{definition}{Definition}
\newtheorem{theorem}{Theorem}
\newtheorem{question}{Question}
\newtheorem{remark}{Remark}
\newtheoremstyle{derp}
{3pt}
{3pt}
{}
{}
{\upshape}
{:}
{.5em}
{}
\theoremstyle{derp}
\newtheorem{example}{Example}
\newcommand{\R}{\mathbb{R}}
\newcommand{\Z}{\mathbb{Z}}
\newcommand{\C}{\mathcal{C}}
\newcommand{\D}{\mathcal{D}}
\newcommand{\N}{\mathbb{N}}
\newcommand{\dist}{\mathrm{d}}
\newcommand{\PP}{\mathcal{P}}
\newcommand{\CHull}{\mathrm{CHull}}
\newcommand\xqed[1]{%
  \leavevmode\unskip\penalty9999 \hbox{}\nobreak\hfill
  \quad\hbox{#1}}
\newcommand\qee{\xqed{$\fullmoon$}}
\newcommand{\pow}{\mathcal{P}}
\newcommand{\follow}{\mathcal{F}}
\newcommand{\GL}{\mathrm{GL}}
\newcommand{\SSS}{\mathcal{S}}
\title{Avoshifts, Unishifts and Nondeterministic Cellular Automata}
\author{
Ville Salo \\
vosalo@utu.fi
}
\begin{document}
\maketitle

\begin{abstract}
In this paper, we study avoshifts and unishifts on $\Z^d$. Avoshifts are subshifts where for each convex set $C$, and each vector $v$ such that $C \cup  \{\vec v\}$ is also convex, the set of valid extensions of globally valid patterns on $C$ to ones on $C \cup \{v\}$ is determined by a bounded subpattern of $C$. Unishifts are the subshifts where for such $C, \vec v$, every $C$-pattern has the same number of $\vec v$-extensions. Cellwise quasigroup shifts (including group shifts) and TEP subshifts are examples of unishifts, while unishifts and subshifts with topological strong spatial mixing are examples of avoshifts.
We prove that every avoshift is the spacetime subshift of a nondeterministic cellular automaton on an avoshift of lower dimension up to a linear transformation and a convex blocking. From this, we deduce that all avoshifts contain periodic points, and that unishifts have dense periodic points and admit equal entropy full shift factors.
\end{abstract}


\section{Introduction}


It is a common problem in symbolic dynamics to try to understand to what extent we can locally tell which patterns end up being globally valid. In this paper, we study avoshifts and unishifts, where it is taken as a defining property that local validity (in suitable situations) implies global validity. We show that this already imposes a lot of structure on the subshift.

Avoshifts and unishifts were first defined in \cite{Sa24} (the latter under the term ``equal extension counts''), where it was shown that avoshifts (and therefore the subclass of unishifts) have good algorithmic properties. In the present paper, we explore instead their connections to nondeterministic cellular automata, as well as their dynamical properties.

Avoshifts cover several previously-studied classes of subshifts, in particular group shifts, TEP shifts and TSSM shifts, which we discuss in Section~\ref{sec:Motivation}. We state our new contributions in Section~\ref{sec:Results}, mainly the representation of avoshifts in terms of nondeterministic cellular automata, and the density of periodic points and full shift factors obtained as corollaries (for unishifts). In Section~\ref{sec:Examples}, we show specifically how the Ledrappier subshift and the golden mean shift can be seen as spacetimes of nonuniform cellular automata.

\subsection{Motivation and background}
\label{sec:Motivation}

Let $A$ be a finite alphabet. A \emph{subshift} $X \subset A^{\Z^d}$ is a set that is topologically closed for the Cantor topology of $A^{\Z^d}$ and closed under the shift maps $\sigma_{\vec u}(x)_{\vec v} = x_{\vec v - \vec u}$. Its elements are called \emph{points} or \emph{configurations}. A \emph{subshift of finite type} or \emph{SFT} is a subshift defined by removing the configurations of $A^{\Z^d}$ whose shift-orbit intersects a particular clopen set $F$. A clopen set is finite union of cylinders $[P] = \{x \in A^G \;|\; x|D = P\}$ where $P : D \to A$ for finite $D \Subset \Z^d$, and the patterns $P$ corresponding to $F$ are called the \emph{forbidden patterns}.

It is well-known in the field of symbolic dynamics that one-dimensional SFTs (the case $d = 1$) are much more understandable than multidimensional SFTs ($d \geq 2$). For example, questions about multidimensional SFTs are usually undecidable, while for one-dimensional SFTs they are usually decidable (though there are exceptions in both cases). A recurring endeavor in the field of symbolic dynamics is to try to find classes of multidimensional subshifts which capture some of the interesting phenomena that appear in two-dimensional SFTs, but where at least some properties are decidable or otherwise understandable. 

Group shifts -- groups where the elements are points of the subshift, and group operations are continuous and shift-commuting -- are the most successful such a class.

\begin{example}
\label{ex:Ledrappier}
The \emph{Ledrappier subshift} $X$ is the set of two-dimensional binary configurations $x \in \Z_2^{\Z^2}$ satisfying
\[ \forall \vec v \in \Z^2: x_{\vec v} + x_{\vec v + e_1} + x_{\vec v + e_2} = 0_{\Z_2} \]
where $e_1 = (1,0), e_2 = (0,1)$. The set $\Z_2^{\Z^2}$ is a group under cellwise addition, and $X$ is a shift-invariant closed subgroup of it, i.e.\ a group shift. \qee
\end{example}

We list some of the key properties of group shifts:
\begin{enumerate}
\item\label{it:sft} They are SFTs. 
\cite{Ki87,KiSc88}
\item\label{it:lang} Their languages are decidable (uniformly). \cite{KiSc88}
\item\label{it:lang} Their inclusions are decidable. \cite{BeKa24}
\item\label{it:trace} Their lower-dimensional traces (meaning projections to lower dimension) can be computed. \cite{BeKa24}
\item\label{it:dpp} Their periodic points are dense. \cite{KiSc88}
\item\label{it:eef} They factor onto any full shift with lower or equal entropy. \cite{BoSc08}
\item\label{it:meas} They admit a natural measure. \cite{Ha33}
\end{enumerate}
``Uniformly'' means that the computation takes also the subshift as an input (in the form of forbidden patterns). In item~\ref{it:eef}, actually the subshifts always factor onto an equal entropy full shift. The natural measure referred to here is the Haar measure; we do not attempt to give a general definition of natural here.

For one-dimensional SFTs, not all these results hold, but we have a good understanding of their failure. In particular, we mention that for topologically transitive SFTs there is a natural measure called the Parry measure \cite{Pa66a} and periodic points are dense \cite{LiMa95}. As for item~\ref{it:eef}, if $X$ is a one-dimensional SFT and $h(X) \geq \log n$ then it factors onto the $n$-symbol full shift \cite[Theorem 5.5.8]{LiMa95} (furthermore, the factor map has a section given by a finite-state transducer).

In \cite{Sa22d}, the author introduced the so-called TEP subshifts (and more generally $k$-TEP subshifts). The Ledrappier example is also the most important example of a TEP subshift, but TEP subshifts generalize the idea that the defining rule ($x_{\vec v} + x_{\vec v + e_1} + x_{\vec v + e_2} = 0_{\Z_2}$) is ``permutive'' in the coordinates $(\vec v, \vec v + e_1, \vec v + e_2)$, while group shifts instead generalize the idea that the set of its solutions is a group. TEP subshifts share all of the properties listed above for group shifts, but they are essentially orthogonal to group shifts, indeed as shown in \cite[Example~5.14]{Sa22d}, even simple TEP subshifts (of algebraic origin) may not commute with any group structure (or even a unital magma or quasigroup operation).

The natural measure $\mu$ of a TEP subshift $X$, called the TEP measure, is defined by taking the marginal distribution on every finite convex subset of $\Z^d$ to be uniform over globally valid patterns:
\begin{equation}
\label{eq:unimeasure}
\forall C \Subset \Z^d\mbox{ convex}: \forall p \in X|C: \mu(p) = 1/\#X|C.
\end{equation}

Gluing/mixing properties can also have similar implications. 
We mention in particular the topological strong spatial mixing or \emph{TSSM} \cite{Br18}, and \emph{safe symbols} \cite{Br18}. 
We say $0 \in A$ is a safe symbol for an SFT $X$ if turning a non-$0$ symbol to $0$ in a valid configuration can never introduce a forbidden pattern; 
and $X$ has TSSM with gap $n \in \N$, if for any $U, V, S \subset \Z^d$ such that $\dist(U, V) \geq n$, and for every $u \in A^U$, $v \in A^V$ and $s \in A^S$, we have
\[ u \cup s \sqsubset X, v \sqcup s \sqsubset X \implies u \cup v \sqsubset s \sqsubset X, \]
where $\cup$ denotes union of patterns and $\sqsubset X$ denotes global validity in $X$. The TTSM property is implied by having a safe symbol. TSSM subshifts have all the properties listed above for group shifts, apart from having a (unique) natural measure. 

\begin{example}
\label{ex:GoldenMean}
A simple example of a subshift with a safe symbol is the two-dimensional golden mean shift $Y$, a.k.a.\ the hard-core model. It is the set of two-dimensional binary configurations $x \in \{0, 1\}^{\Z^2}$ satisfying
\[ \forall \vec v \in \Z^2: x_{\vec v} = 1 \implies x_{\vec v + e_1} = x_{\vec v + e_2} = 0. \]
Equivalently, two $1$s cannot appear consecutively in the horizontal or vertical direction. The symbol $0$ is a safe symbol, since changing a $1$ to a $0$ cannot result in a new adjacent pair of $1$s. \qee
\end{example}

The first three columns of Table~\ref{tab:Properties} summarize these properties of group shifts, TEP subshifts and TSSM shifts.

\begin{table}
\begin{tabular}{|l|l|l|l|l|l|l|}
\hline
			& $\Z$-SFT & group shift	& $k$-TEP		& TSSM				& unishift 		& avoshift \\
\hline
SFT			& yes & yes	\cite{KiSc88}& yes			& yes \cite{Br18}		& yes \cite{Sa24}		& yes \cite{Sa24} \\
\hline 
comp.\ language	& yes & yes	\cite{KiSc88}& yes 			& yes	\cite{Br18}		& yes \cite{Sa24}		& yes \cite{Sa24} \\
\hline
comp.\ inclusion & yes & yes	\cite{BeKa24}& yes			& yes				& yes	\cite{Sa24}		& yes \cite{Sa24} \\
\hline
comp.\ traces & yes & yes	\cite{BeKa24}& yes			& yes				& yes	\cite{Sa24}		& yes \cite{Sa24} \\
\hline 
natural measure	& ok & yes	\cite{Ha33}	& yes			& no \cite{Br18,BuSt94}	& \textbf{yes}	 		& no \cite{BrMcPa18,BuSt94}	 \\
\hline
periodic points	& ok & dense \cite{KiSc88}& dense		& dense \cite{Br18}		& \textbf{dense}		& \textbf{exist} \\
\hline
full shift factors	& $\leq h$ & equal $h$ \cite{BoSc08}	& equal $h$	& $< h$ \cite{BrMcPa18}& \textbf{equal $h$}	& ? \\
\hline
\end{tabular}
\caption{A summary of properties of various known examples of avoshifts (and the class of all avoshifts), highlighting the new contributions. ``Comp.'' means computable; $\leq h$ means a factor map exists to any full shift with less or equal entropy (and similarly for equal $h$ and $< h$); ``ok'' means that the issue is completely understood.}
\label{tab:Properties}
\end{table}

\subsection{New results}
\label{sec:Results}

\emph{Avoshifts} are \emph{subshifts} (shift-invariant closed subsets of $A^{\Z^d}$ for finite alphabet $A$) such that whenever $D$ and $D \cup \{\vec 0\}$ are convex subsets of $\Z^d$ (intersections of Euclidean convex sets with $\Z^d$), there exists a finite set $C \Subset D$ such that for all restrictions $x \in X|D$, the possible extensions of $x$ to $\vec 0$ (giving an element of $X|D \cup \{\vec 0\}$) are determined by the restriction $x|C$. Avoshifts generalize the group shifts,\footnote{This is true up to conjugacy -- the group operation must be defined cellwise for the avoshift property to hold, but all group shifts admit a conjugate form where this is true.} TEP subshifts and TSSM subshifts discussed in the previous section, as shown in \cite{Sa24}.

The following theorem is the main new result of the present paper for avoshifts. 
It was shown in \cite{Sa24} that in dimension $d = 1$, avoshifts are precisely the subshifts of finite type. Our theorem gives information on their structure in higher dimensions.

\begin{theorem}
\label{thm:AvoNS}
Let $X \subset A^{\Z^d}$ be an avoshift in dimension $d \geq 2$. Then there is an avoshift $Z \subset B^{\Z^d}$ obtained from $X$ by a change of basis of $\Z^d$ and applying a convex block coding, such that $Z$ is the two-sided spacetime subshift of a nondeterministic cellular automaton on a $(d-1)$-dimensional avoshift.
\end{theorem}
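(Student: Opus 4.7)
The plan is to designate a primitive integer direction $\vec w \in \Z^d$ as ``time,'' apply an integer change of basis so that $\vec w = e_d$, and then use a convex block coding that concatenates $k$ consecutive time-layers into a single thick layer. Here $k$ is chosen large enough to capture both the temporal depth of the avoshift dependency and the temporal extent of any forbidden pattern of $X$. After these transformations, each thick layer is a local function of the preceding one, which is the defining structure of a nondeterministic cellular automaton (NCA) spacetime.

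After the change of basis, write $H_t = \Z^{d-1} \times \{t\}$ and $H_{<t} = \Z^{d-1} \times \Z_{<t}$. Applying the avoshift property of $X$ to the convex half-space $H_{<0}$ with extension point $\vec 0 \in H_0$ yields a finite determining set $C \Subset H_{<0}$. Let $k \geq 1$ be minimal with $C \subset \Z^{d-1} \times \{-k, \ldots, -1\}$, enlarged if necessary so that every forbidden pattern of $X$ (which is an SFT by \cite{Sa24}) has temporal extent less than $k$. By shift-invariance, the $X$-value at $(\vec u, t+1)$ is then a local function of the $X$-values in a bounded spatial window across the $k$ preceding time-layers.

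Next I would verify that the trace of $X$ on any hyperplane of $\Z^d$ is a $(d-1)$-dimensional avoshift: convex subsets of a hyperplane are convex in $\Z^d$, and valid extensions within the trace coincide with those in $X$, so the avoshift property transfers directly. Let $B = A^k$ and define the convex block coding $\phi: X \to Z \subseteq B^{\Z^d}$ by $\phi(x)_{\vec u, t} = (x_{\vec u, t-k+1}, \ldots, x_{\vec u, t})$, with convex neighborhood $\{\vec 0\}^{d-1} \times \{-k+1, \ldots, 0\}$. Then $\phi$ is a shift-commuting conjugacy, so $Z$ is an avoshift, and $Y := Z|H_0$ is a $(d-1)$-dimensional avoshift: up to relabeling, it is the block-coded trace of $X$ on the thick slab $\Z^{d-1} \times \{-k+1, \ldots, 0\}$. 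Define $R \subseteq Y \times Y$ by $(y, y') \in R$ iff some $z \in Z$ satisfies $z|H_0 = y$ and $z|H_1 = y'$; then $R$ is closed, shift-invariant, and locally determined via $C$, so $(Y, R)$ is an NCA.

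Finally, I would check $Z = \mathrm{spacetime}(Y, R)$. The inclusion $\subseteq$ is immediate. For the reverse, given $(y_t)_{t \in \Z}$ with each $(y_t, y_{t+1}) \in R$, the successive thick slabs overlap consistently on $k-1$ layers (via the common $z \in Z$ in the definition of $R$), so they assemble into a candidate $x \in A^{\Z^d}$; and $x \in X$ because any forbidden pattern fits within a single thick slab, which corresponds to some $y_t \in Y$ and is therefore forbidden-pattern-free. The main obstacle is this reverse inclusion: arguing that the \emph{local} NCA-compatibility between consecutive slabs propagates to a globally valid $X$-configuration, which is where the choice of $k$ (capturing both the avoshift memory and the temporal extent of forbidden patterns) becomes essential.
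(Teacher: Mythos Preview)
Your argument correctly shows that $Z$ equals the spacetime of the \emph{relation} $R$ you define (consecutive thick slabs of $Z$), but it does not establish that $R$ is a nondeterministic cellular automaton. The avoshift property applied to $(H_{<0},\vec 0)$ tells you that the follower set at the \emph{single} cell $\vec 0$ is determined by a bounded window $C$ in the past slabs. It does \emph{not} tell you that the cells of the new slab can be filled \emph{independently} by such local choices, which is exactly what the NCA definition requires: once you place a symbol at $\vec 0$, the domain becomes $H_{<0}\cup\{\vec 0\}$, and the avoshift property applied there to the next cell $e_1$ may well produce a determining set containing $\vec 0$. Your sentence ``locally determined via $C$, so $(Y,R)$ is an NCA'' is precisely the step that fails.

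This is not a hypothetical worry. In the paper's Ledrappier example with the standard basis, the relation ``row $i$ is a valid predecessor of row $i+1$'' is \emph{not} an NCA: one free choice in row $i$ determines the rest of that row non-locally. So for an arbitrary primitive direction $\vec w$ your construction simply does not produce an NCA, and you never say how $\vec w$ is chosen (nor do you address the two-sided requirement at all). The paper's proof supplies exactly the missing ingredients: it shows that the avomanual is \emph{permissive}, and from permissivity deduces (Theorem~\ref{thm:MainTechnical}) that for any \emph{sufficiently slanted} direction $\vec w$ there is a single rule $(C,D)$ with $C\Subset H_{\vec w,0}^\circ$ and $\bar H_{\vec w,0}\setminus\{\vec 0\}\subset D$ --- i.e.\ the same local rule applies at every boundary cell with guard covering all the other boundary cells, which is precisely the independence needed for an NCA. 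Applying this to both $\vec w$ and $-\vec w$ then gives the two-sided structure. Your blocking-and-SFT argument for the inclusion $\mathrm{spacetime}(Y,R)\subset Z$ is fine once $R$ is known to be an NCA, but that is the entire content of the theorem.
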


Convex block codings are a specific type of topological conjugacy where we record a convex-shaped context into each cell. A nondeterministic cellular automaton is a generalization of a classical cellular automaton, where there can be multiple (always at least one) choice for the image of a letter. A key point is that these choices are independent in different cells, which is the main improvement of the theorem over the structure theory in \cite{Sa24} (though the latter applies more generally to strongly polycyclic groups). By ``two-sided'' we mean that the possible extensions of a $(d-1)$-dimensional configuration are determined in both directions ($e_d$ and $-e_d$) by nondeterministic cellular automata.

The theorem seems to be ``new'' for all of the three examples we mentioned, i.e.\ group shifts, $k$-TEP and TSSM subshifts. For $k$-TEP, it is almost immediate (it follows from the fact that hyperplanes are convex sets and the main result of \cite{Sa22d}), and for TSSM it is straightforward to show (because TSSM are in fact \emph{uniformly} avo as shown in \cite{Sa24}). For group shifts, we do not know an essentially simpler proof.


In the case of $\Z^d$, all of the main results of \cite{Sa24} are immediate corollaries of this theorem. It also gives the following result which was not obtained in \cite{Sa24}:

\begin{theorem}
Every avoshift has a periodic point.
\end{theorem}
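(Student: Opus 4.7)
The plan is to prove the result by induction on the dimension $d$, with Theorem~\ref{thm:AvoNS} supplying the induction step. For the base case $d = 1$, one-dimensional avoshifts are exactly subshifts of finite type \cite{Sa24}, and every nonempty one-dimensional SFT contains a periodic point by the standard pigeonhole argument on allowed words of increasing length.

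For $d \geq 2$, let $X$ be an avoshift. Theorem~\ref{thm:AvoNS} produces an avoshift $Z \subset B^{\Z^d}$, obtained from $X$ by a change of basis of $\Z^d$ and a convex block coding, which is the two-sided spacetime subshift of a nondeterministic cellular automaton on a $(d-1)$-dimensional avoshift $Y$. Both operations preserve the existence of periodic points (a change of basis is an automorphism of $\Z^d$, transforming period lattices linearly, while a convex block coding is a topological conjugacy), so it suffices to find a periodic point of $Z$. Applying the induction hypothesis to $Y$ yields a periodic point $y_0 \in Y$ with some full-rank period lattice $L \subset \Z^{d-1}$.

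The main step is to construct an element of $Z$ whose rows are all $L$-periodic. Because the nondeterministic CA is shift-equivariant and, crucially, its choices are independent across cells, when the input row is $L$-periodic the set of allowed values at each cell of the output row depends only on the $L$-orbit of that cell. One may therefore choose successor values cell-by-cell on a fundamental domain of $L$ and extend $L$-periodically to obtain a valid $L$-periodic successor row; the independence of choices ensures this assembled row is still a legitimate output. Iterating this procedure forward and backward using the two-sided structure produces a configuration $z \in Z$ every row of which is an $L$-periodic element of $Y$.

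Since there are only finitely many $L$-periodic elements of $Y$, the $L$-periodic elements of $Z$ form a nonempty one-dimensional SFT on this finite alphabet, with local rule inherited from the NCA. The base case then supplies a point of period $q$ in the remaining direction, so the corresponding element of $Z$ is periodic under the full-rank sublattice $L \oplus q\Z \subset \Z^d$. The principal obstacle I anticipate is confirming the independent-choices step: one must check that the paper's definition of a nondeterministic cellular automaton on an avoshift really allows the $L$-periodic selection to land in $Y$, rather than merely in the ambient full shift. If the definition imposes extra global constraints, one should be able to fall back on the avoshift property of $Z$ directly, extending an $L$-periodic pattern one convex layer at a time, but the inductive skeleton above would be unchanged.
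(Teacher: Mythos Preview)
Your proposal is correct and follows essentially the same route as the paper, which packages the induction as Lemma~\ref{lem:IBSPeriodic} (every INS has a totally periodic point) after first establishing the full inductive binondeterministic spacetime structure. Your anticipated obstacle is a non-issue because the paper's definition of a nondeterministic cellular automaton requires the relation $R$ to lie in $Z \times Z$, so any cellwise selection automatically lands back in $Y$; indeed the paper streamlines your fundamental-domain step by simply choosing a single-valued section $f$ of $F$, which is then an ordinary cellular automaton and hence preserves $L$-periodicity automatically, so only the forward iteration and a pigeonhole on the finitely many $L$-periodic points of $Y$ are needed.
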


\emph{Unishifts} are the subshifts where whenever $D$ and $D \cup \{\vec 0\}$ are convex subsets of $\Z^d$, the number of extensions of a $D$-shaped pattern $p$ to $D \cup \{\vec v\}$ is independent of $p$. It was shown in \cite{Sa24} that unishifts are always avoshifts.

The proof of the existence of a natural measure for $k$-TEP subshifts in \cite{Sa22d} consists of showing that $k$-TEP subshifts have this defining property of unishifts, after which it is almost immediate that Equation~\ref{eq:unimeasure} defines a shift-invariant measure. Accordingly, the following theorem is almost immediate:

\begin{theorem}
Equation~\ref{eq:unimeasure} defines a measure of maximal entropy for unishifts.
\end{theorem}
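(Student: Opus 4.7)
The plan is to verify that Equation~\ref{eq:unimeasure} consistently extends to a shift-invariant Borel probability measure on $X$, and that this measure achieves the topological entropy.

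First I would extend the one-cell unishift property to arbitrary convex pairs: for any finite convex $C' \subsetneq C \Subset \Z^d$, the number of extensions of any $p \in X|C'$ to a pattern of $X|C$ depends only on $C$ and $C'$, not on $p$. This is proved by enumerating $C \setminus C' = \{\vec v_1, \ldots, \vec v_k\}$ so that $C' \cup \{\vec v_1, \ldots, \vec v_i\}$ is convex for every $i \leq k$---for example by greedily adjoining a point of $C$ outside the current set that minimizes Euclidean distance to its convex hull---and then iterating the one-cell unishift property, which factors the extension count as a product of terms each independent of $p$.

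Second, I would use this to define $\mu$ on an arbitrary cylinder $[p]$ with $p : D \to A$, $D \Subset \Z^d$, and $p \in X|D$, by setting $\mu([p]) = |\{q \in X|C : q|D = p\}|/ \#X|C$ for any finite convex $C \supset D$. Independence of the choice of $C$ follows by passing to a common convex superset and invoking the preceding paragraph; Kolmogorov consistency across $D \subset D' \Subset \Z^d$ is immediate after enclosing $D'$ in a single convex $C$ and summing over patterns on $D'$; shift invariance follows from $\#X|(C + \vec u) = \#X|C$. Kolmogorov's extension theorem then produces the desired shift-invariant $\mu$, which by construction agrees with Equation~\ref{eq:unimeasure} on convex $C$.

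Third, to compute the entropy, fix a F\o lner sequence of convex sets $C_n \uparrow \Z^d$, e.g.\ $C_n = [-n,n]^d \cap \Z^d$. The topological entropy is $h_{\mathrm{top}}(X) = \lim_n |C_n|^{-1} \log \#X|C_n$. Because $\mu$ is uniform over $X|C_n$ by construction, the Shannon entropy of the partition of $A^{\Z^d}$ by $C_n$-cylinders under $\mu$ equals exactly $\log \#X|C_n$, so $h_\mu(X) = \lim_n |C_n|^{-1} \log \#X|C_n = h_{\mathrm{top}}(X)$. Hence $\mu$ is a measure of maximal entropy. The main obstacle is the combinatorial lemma underlying the first step---that any two nested finite convex subsets of $\Z^d$ can be joined by a sequence of one-point convex extensions---once this is in hand, everything else is essentially a consistency and entropy-bookkeeping exercise.
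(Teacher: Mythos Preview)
Your proposal is correct and follows essentially the same route as the paper. The paper defers the construction of the shift-invariant measure to \cite[Theorem~5.5]{Sa22d}, whereas you spell out that argument explicitly; the combinatorial lemma you flag as the main obstacle (joining nested finite convex sets by one-point convex extensions) is precisely \cite[Lemma~3.10]{Sa22d}, which the paper also invokes in the proof of Lemma~\ref{lem:ConvexManuals}, and your greedy distance-minimizing argument for it matches the idea behind Lemma~\ref{lem:ExtensionStuff}. Your entropy computation via uniformity on a convex F{\o}lner sequence is identical in substance to the paper's computation on cubes $[0,n-1]^d$.
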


For unishifts, we obtain the following two results, as rather immediate corollaries of Theorem~\ref{thm:AvoNS}.

\begin{theorem}
Every unishift has dense totally periodic points.
\end{theorem}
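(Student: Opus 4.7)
My plan is to proceed by induction on the dimension $d$. For the base case $d=1$, a unishift is in particular an SFT by \cite{Sa24}, and the uniform extension-count property, applied to both the one-step left and right extensions of arbitrary words in the language, forces any right-resolving presentation to have equal out-degree and (dually) equal in-degree at every accessible vertex. Such a graph has no transient states, so the SFT decomposes as a finite disjoint union of irreducible one-dimensional SFTs, each of which has dense periodic points by classical results; in dimension one these are totally periodic.

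For the inductive step, let $X \subset A^{\Z^d}$ be a unishift with $d \geq 2$. A change of basis of $\Z^d$ and a convex block coding both preserve the unishift property, the latter because such a recoding translates convex extension problems in the recoded subshift to extension problems in the original over Minkowski sums of convex sets, which remain convex and admit a step-by-step convex single-point refinement. Hence by Theorem~\ref{thm:AvoNS} I may assume that $X$ is the two-sided spacetime subshift of a nondeterministic cellular automaton $F$ on a $(d-1)$-dimensional avoshift $Y$. The trace $Y$ then inherits the unishift property from $X$, since any convex $D \subset \Z^{d-1}$ with $D \cup \{\vec v\}$ convex embeds as a convex subset of $\Z^d$ on which the relevant extension counts in $X$ and $Y$ agree. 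By the inductive hypothesis, $Y$ has dense totally periodic points.

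To establish density in $X$, fix a valid pattern $p$ in $X$, extend $p$ arbitrarily to some $x \in X$, and let $y \in Y$ be its restriction to the zero-th hyperplane. Using density in $Y$, pick a totally $L$-periodic $y^* \in Y$ for some finite-index sublattice $L \leq \Z^{d-1}$, agreeing with $y$ on a fundamental domain large enough to cover the horizontal shadow of $p$. The set $X_L \subset X$ of configurations that are $L$-periodic in the first $d-1$ coordinates is naturally a one-dimensional $\Z$-SFT over the finite alphabet $A^{\Z^{d-1}/L}$ (since $X$ is itself an SFT), and a vertically periodic point of $X_L$ is exactly a totally periodic point of $X$. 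Because every nonempty $\Z$-SFT has a periodic point, it suffices to exhibit a configuration in $X_L$ whose zero-th row is $y^*$.

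The main obstacle is this last lifting step, and it is precisely where the unishift hypothesis (as opposed to the bare avoshift hypothesis) enters in an essential way: for a merely avoshift one cannot rule out that every extension of $y^*$ via $F$ breaks $L$-periodicity irrecoverably. Given the $L$-periodic $y^*$, the set $F(y^*)$ of valid next rows is a nonempty, closed, $L$-invariant subset of $Y$; the unishift property forces the number of such extensions over any $L$-fundamental domain to be a fixed positive constant, and exploiting shift-equivariance together with an averaging/pigeonhole argument on an appropriately refined finite-index sublattice $L' \leq L$ produces an $L'$-periodic element of $F(y^*)$. Iterating the nondeterministic cellular automaton both forward and backward in time, the finiteness of the set of $L'$-periodic configurations in $Y$ forces a time-periodic orbit, which lifts to the required totally periodic configuration in $X$ extending $p$.
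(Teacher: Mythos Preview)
Your overall strategy---induction on dimension, pass to the nondeterministic-CA representation, then reduce to a one-dimensional problem on the subshift $X_L$ of horizontally $L$-periodic points---matches the paper. But the step where you actually produce a totally periodic point containing $p$ has a real gap.

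First, the sentence ``because every nonempty $\Z$-SFT has a periodic point, it suffices to exhibit a configuration in $X_L$ whose zeroth row is $y^*$'' does not do what you need: it yields \emph{some} periodic point of $X_L$, not one containing $p$. Your final paragraph tries to repair this by iterating the NCA from $y^*$ and invoking finiteness, but that only shows the forward (or backward) orbit eventually enters a cycle; nothing forces $y^*$ itself, let alone the multi-row pattern $p$, to lie on that cycle. A generic $\Z$-SFT can have words that appear in no periodic point, so mere SFT-ness of $X_L$ is insufficient. Second, you never explain why $p$ (which may span several hyperplanes) appears in $X_L$ at all: you only control the zeroth hyperplane via $y^*$.

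The paper closes both gaps by exploiting that the NCA rule is \emph{uniform}---every local input has the same number $k$ of outputs, which is exactly the unishift property applied to the singleton convex extension from a hyperplane. Uniformity has two consequences. For the second gap: since the rule is local, if the period lattice $L$ is sparse enough one can make $L$-periodic choices for $F$, starting from the bottom slice of $p$ inside $y^*$, that rebuild all of $p$; hence $p \sqsubset X_L$. For the first gap: uniformity makes $X_L$ (as a $\Z$-shift) a vertex shift with constant in- and out-degree $k^{|\Z^{d-1}/L|}$, and such a graph is an edge-disjoint union of cycles by Hall's theorem, so periodic points are \emph{dense} in $X_L$. Density, not mere existence, is what lets you find a periodic point containing $p$. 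Your ``averaging/pigeonhole on a refined $L'$'' is gesturing at the right phenomenon but never delivers the density statement you need.
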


\begin{theorem}
Every unishift has an equal entropy full shift factor.
\end{theorem}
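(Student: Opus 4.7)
The plan is to apply Theorem~\ref{thm:AvoNS} to the unishift $X$ to obtain, up to a change of basis of $\Z^d$ and a convex block coding, a conjugate subshift $Z$ presented as the two-sided spacetime of a nondeterministic cellular automaton (NCA) on a $(d-1)$-dimensional avoshift $Y$. Both operations are topological conjugacies and therefore preserve entropy and the existence of an equal-entropy full-shift factor, so it suffices to produce such a factor for $Z$.

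The central observation is that the unishift property of $X$ upgrades the NCA to have a \emph{uniform local choice count}: there is a constant $k$ such that for every admissible input pattern of the local rule, exactly $k$ output letters are allowed. To see this, fix a target cell $\vec v$ on the ``next'' time slice and let $D$ be a convex past region on which the rule at $\vec v$ depends; one checks that $D \cup \{\vec v\}$ is still a convex subset of $\Z^d$ (its convex hull picks up no new lattice points, since $\vec v$ is the unique integer point at its height), so the unishift hypothesis gives a constant $k$ extensions of any globally valid $D$-pattern to $D \cup \{\vec v\}$. By the avoshift property this count depends only on the finite part of $D$ visible to the NCA rule, and shift-invariance makes $k$ uniform across all admissible inputs.

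Given $k$, I fix an enumeration of the allowed outputs for each admissible input and define $\phi : Z \to \{0, 1, \ldots, k-1\}^{\Z^d}$ as the sliding block code that records the choice index used at every spacetime cell. It is continuous and shift-equivariant by construction. For surjectivity onto the full shift, I pick a globally valid slice of $Y$ (using the existence of periodic points for avoshifts, already established earlier in this section) and, given any $c \in \{0, \ldots, k-1\}^{\Z^d}$, build a preimage time step by time step in both directions: at each spacetime cell, $c$ selects a unique allowed letter, and independence of the NCA's local nondeterministic choices guarantees the resulting configuration lies in $Z$.

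For the entropy match, any pattern of $Z$ on the cube $[0, n-1]^d$ is determined by a single $(d-1)$-dim slice from $Y$ together with a choice index at each remaining spacetime cell, giving at most $e^{O(n^{d-1})} \cdot k^{n^d}$ such patterns; hence $h(Z) \leq \log k$, while $\phi$ being a factor onto a full shift of entropy $\log k$ forces the reverse inequality, and so $\phi$ has equal entropy. The main obstacle I anticipate is the verification that the unishift constant $k$ is genuinely the pointwise NCA choice count across every admissible input; this amounts to carefully aligning the convex extension pair $(D, \vec v)$ of the unishift definition with the particular neighborhoods exhibited in the proof of Theorem~\ref{thm:AvoNS}.
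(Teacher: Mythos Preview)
Your proposal is correct and follows essentially the same approach as the paper: after the structure theorem, the factor map records the choice index at each spacetime cell, with surjectivity and the entropy computation exactly as in Lemma~\ref{lem:UNSFactor} and Lemma~\ref{lem:Entropy}. The only organizational difference is that the paper obtains the uniformity of the local choice count by running the entire structure-theorem machinery through the \emph{unimanual} (so that $Z$ is directly a uniform nondeterministic spacetime), whereas you apply Theorem~\ref{thm:AvoNS} as stated for avoshifts and then recover uniformity afterward via the convex extension $(D, D\cup\{\vec v\})$ with $D$ a lower halfspace---both routes are valid, and yours is arguably more self-contained for this particular corollary.
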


The theorems above are in fact proved for various classes of subshifts coming from nondeterministic cellular automata, and then concluded for avoshifts and unishifts from Theorem~\ref{thm:AvoNS}. Of course, these results may be of independent interest, although we do not list them here separately.

Compared to \cite{Sa24}, the approach in the present paper is more abstract, and we develop an abstract theory of ``manuals'' and ``building'', which abstract away the way we build sets from other sets with local rules in avoshifts, and give rise to a combinatorial theory which could have some independent interest (or other applications).

While this may introduce a psychological hurdle for the reader, it clarifies induction steps, since now stepping to lower dimension is much easier (it simply gives another manual), while in \cite{Sa24} we had to explicitly keep track of a higher-dimensional context. Crucially, it also helps clarify the new key technical idea of ``permissivity'', a type of commutativity that the local rules of avoshifts always satisfy.

\subsection{Nondeterministic cellular automata representation for the examples}
\label{sec:Examples}

We immediately explain what the nondeterministic cellular automata look like for Example~\ref{ex:Ledrappier} and Example~\ref{ex:GoldenMean}, as many readers are likely to be deeply familiar with these examples, and they may be sufficient for understanding what kind of structure the main theorem provides. The examples do not illustrate how the theory constructs the presentation. For basic definitions, the reader should look at Section~\ref{sec:Definitions}.

The precise definitions related to nondetermistic cellular automata are given in Section~\ref{sec:Nondeterministic}, but we give a brief explanation. The idea is that nondeterministic cellular automata are a variant of cellular automata where the local rule may output multiple symbols, and any choice of symbol is valid. Our definition is equivalent to that in \cite{LeMa14}, although we concentrate on the subshifts they define rather than their nondeterministic dynamicals.

More precisely, a configuration in a two-dimensional subshift can be thought of as a sequence of rows, and the nondeterministic cellular automats will take the contents $x$  of row $i$ (resp.\ row $i+1$) as input, and produce any possible contents $y$ on the $(i+1)$th row (resp.\ $i$th row) by applying its local rule to $x$. The rule is $y_j \in F(x|j+N)$ (for all $j \in \Z$) where $N \Subset \Z$ is a finite \emph{neighborhood} and $F : A^N \to \pow(A)$ is a \emph{local rule}. Crucially, $F(w)$ is always nonempty, the choices $y_j \in F(x|j+N)$ are independent for each different $j$, and the rule $F$ is also the same for each $j$ (in $F(x|j+N)$, we identify $x|j+N$ with an $N$-shaped pattern in an obvious way).

\subsubsection{The Ledrappier example}

We start with the Ledrappier example $X$ from Example~\ref{ex:Ledrappier}. For missing definitions, see Section~\ref{sec:Definitions}. Indexing rows from south to north on the second axis and columns from west to east, note that the dynamics of the Ledrappier example is north deterministic (row $i$ determines row $i+1$), to the east (column $i$ determines column $i+1$) and also southwest (diagonal $i+1$ determines diagonal $i$). However, the other direction is not given by a nondeterministic cellular automaton in any of these cases: a single arbitrary choice can first be made, and the rest of the row is determined (in a highly non-local fashion).

Consider now the basis $\bar e_1 = (1, 1), \bar e_2 = (0, 1)$ for $\Z^2$. Let $M$ be the matrix with $\bar e_1, \bar e_2$ as rows, and define $X'$ as the subshift with configurations $x' \in X'$ such that for some $x \in X$ we have $x'_{\vec v} = 1 \iff x_{\vec v M} = 1$, equivalently $x'_{\vec v M^{-1}} = 1 \iff x_{\vec v} = 1$ where $M^{-1} = \left(\begin{smallmatrix} 1 & -1 \\ 0 & 1 \end{smallmatrix}\right)$. The rule determining $x \in X'$ becomes
\[ \forall \vec v \in \Z^2: x_{\vec v} + x_{\vec v + (1, -1)} + x_{\vec v + (0, 1)} = 0. \]

The subshift is now north and south deterministic. Still, it is not described by nondeterministic cellular automata, because the forbidden patterns look at three consecutive rows (in cellular automata terminology, one might call this a ``second-order'' nondeterministic cellular automaton). So next, we block the subshift $X'$ by the convex set $N = \{(0,0), (0,1)\}$, taking the new content of a cell to be the contents of $N$, to get a new subshift $X''$ over the new alphabet of $X''$ is $B = \Z_2^2$, which for indexing purposes we split as $B^{\Z^2} = \Z_2^{\Z^2} \times \Z_2^{\Z^2}$. Now $(y, z) \in X''$ means that there exists $x \in X'$ such that for all $\vec v \in \Z^2$, $y_{\vec v} = x_{\vec v}$ and $z_{\vec v} = x_{\vec v + e_2}$. Now $(y, z) \in X''$ if and only if
\[ \forall \vec v: y_{\vec v + e_2} + z_{\vec v} \mbox{ and } z_{\vec v} + y_{\vec v + e_1} + z_{\vec v + e_2}. \]
The first condition here checks that $(y, z)$ comes from the $N$-blocking of some binary configuration, and the second checks the forbidden pattern at $\vec v + e_2$.

Now $X''$ is fully determined by deterministic cellular automata (which are special cases of nondeterministic cellular automata, where the local rule always gives a single output symbol): the $\Z$-trace (i.e.\ set of valid rows, a.k.a.\ projective subdynamics) of $X''$ is $(\Z_2^2)^\Z$; to go from row $i$ to row $i+1$ we can apply the cellular automaton rule $f((a, b), (c, d)) = a + d$ (with neighborhood $\{0,1\}$); and to go from row $i+1$ to row $i$ we can apply the cellular automaton rule $f((a, b), (c, d)) = a + b$ (with neighborhood $\{-1,0\}$).

\subsubsection{The golden mean shift}

Now consider the golden mean shift example $Y$ from Example~\ref{ex:GoldenMean}. From the contents of row $i$ we again cannot compute those of row $i+1$ by a nondeterministic local rule. For example, if row $i$ contains only $0$s, then on row $i+1$ we would like to be able to write any word from the golden mean shift in dimension $1$ (any word without consecutive $1$s). Since a nondeterministic cellular automaton must choose the symbols at each coordinate independently, and it sees only $0$s in its input it cannot ever write $1$ in this case, as nothing would prevent writing two $1$s consecutively.

As in the Ledrappier example, use the basis $\bar e_1 = (1, 1), \bar e_2 = (0, 1)$ for $\Z^2$ to get a subshift $Y'$ where the rule for $x \in Y'$ is
\[ \forall \vec v \in \Z^2: x_{\vec v} = 1 \implies x_{\vec v + (1, -1)} = x_{\vec v + (0, 1)} = 0. \]

The subshift $Y'$ is determined by nondeterministic cellular automata: its $\Z$-trace is the one-dimensional golden mean shift; to go from row $i$ to row $i+1$ we can use the nondeterministic cellular automaton rule
\[ F(a) = \begin{cases} \{0\} & \mbox{ if } a = 1 \\
\{0,1\} & \mbox{otherwise} \end{cases} \]
with neighborhood $\{0\}$; and to go from row $i+1$ to row $i$ we can use the same rule with neighborhood $\{-1\}$.

Note that the $\Z$-trace is not a full shift, but is an avoshift (on $\Z$ this is equivalent to being an SFT).

\section{Definitions and basic facts}
\label{sec:Definitions}

\subsection{Numbers}

Our convention is $\N = \{0,1,\ldots\}$. We write $\Z_n$ for the additive group of integers modulo $n$, and identify the elements with the representatives $\{0, 1, \ldots, n-1\}$.

\subsection{Sets}

If $B, C$ are sets, write $B \sqcup C$ for their disjoint union in the sense that $B \sqcup C = B \cup C$ and the notation suggests that the union is disjoint. We write $B \subset C$ for $B \subseteq C$. Write $B \Subset C$ for a finite subset. Write $\pow(B) = \{C \;|\; C \subset B\}$ for the powerset of $B$. Write the cardinality of a set $B$ as $\#B$ or $|B|$.

We say $C$ is \emph{(sandwiched) between} $A$ and $B$ if $A \subset B \subset C$. We also refer to $C$ as an \emph{intermediate} set. A simple but important observation is that if $(O, \leq)$ is an ordered set and $f : \pow(S) \to O$ is monotone in the sense that $A \subset B \implies f(A) \leq f(B)$ or $A \subset B \implies f(A) \geq f(B)$, then whenever $f(A) = f(B)$ we have $f(C) = f(A) = f(B)$ for all $C$ between $A$ and $B$.

Often, to check $A \subset B$ for two sets, it is helpful to \emph{check inclusion separately in $C$ and elsewhere}, by which we mean apply the following (trivial) lemma:

\begin{lemma}
\label{lem:SetInclusionCases}
Let $X$ be a set and $A, B \subset X$. If $C \subset X$, then
\begin{align*}
A \subset B &\iff (A \cap C) \subset (B \cap C) \mbox{ and }  (A \cap (X \setminus C)) \subset (B \cap (X \setminus C)) \\
&\iff (A \cap C) \subset B \mbox{ and } (A \cap (X \setminus C)) \subset B.
\end{align*}
Similar formulas hold for $A = B$.
\end{lemma}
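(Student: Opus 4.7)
The plan is to verify the two equivalences (and the $A=B$ variant) by direct element-chasing; there is no hidden content beyond unfolding the definition of $\subset$, and the argument amounts to splitting an arbitrary element of $A$ according to whether it lies in $C$ or in $X \setminus C$.

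First, for the top equivalence, the forward direction is immediate: intersecting both sides of $A \subset B$ with any fixed set preserves inclusion, which gives both conjuncts on the right. For the converse, I would fix $a \in A$ and case-split on whether $a \in C$ or $a \in X \setminus C$; in each case $a$ lies in $A \cap C$ or $A \cap (X \setminus C)$, and the corresponding hypothesis places it in $B \cap C$ or $B \cap (X \setminus C)$, hence in $B$. Second, for the equivalence of the two right-hand sides, the key observation is that $A \cap C \subset C$ holds automatically, so $(A \cap C) \subset (B \cap C)$ is equivalent to $(A \cap C) \subset B$; symmetrically for $X \setminus C$. Thus replacing the targets $B \cap C$ and $B \cap (X \setminus C)$ by $B$ does not change the content of the right-hand side.

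Finally, for the ``similar formulas for $A = B$'' clause, I would simply apply the already-proved equivalences once in each direction ($A \subset B$ and $B \subset A$) and take the conjunction; intersecting with $C$ or $X \setminus C$ is compatible with equality. The main obstacle, such as it is, is purely typographical: keeping the proof shorter than the statement itself. In the actual write-up I would likely collapse it into a single sentence pointing out that every element of $A$ lies either in $C$ or in $X \setminus C$, so the two cases together are equivalent to membership in $A$.
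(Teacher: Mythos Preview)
Your proposal is correct and matches the paper's treatment: the paper states this lemma without proof, calling it ``trivial,'' and your element-chasing argument is exactly the routine verification one would supply if pressed.
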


\subsection{Symbolic dynamics}

An \emph{alphabet} is a finite set $A$. The \emph{full shift} (on alphabet $A$ and in dimension $d$) is the topological dynamical system with points $A^{\Z^d}$ and $\Z^d$ acting by \emph{shift maps} $\sigma_{\vec v}(x)_{\vec u} = x_{\vec v + \vec u}$. Here $A^{\Z^d}$ has the product topology, so the shift maps are homeomorphisms. A \emph{subshift} is a topologically closed shift-invariant set $X \subset A^{\Z^d}$.

A \emph{pattern} $p$ is an element of $A^D$ for some $D \subset \Z^d$ called its \emph{shape}. Write $\vec v + p$ for the pattern $q \in A^{\vec v + D}$ defined by $q_{\vec v + \vec u} = p_{\vec u}$. If $f : A^D \to B$ and $p \in A^E$, we may write $f(p) = f(q)$ where $q = \vec v + p$ if such $\vec v \in \Z^d$ exists (in which case it is unique). We write the restriction of $p$ to $E \subset D$ as $p|E$. For a subshift $X$, we write $X|D = \{x|D \;|\; x \in X\}$. Note that if $x \in X$ then $x$ can be seen as a pattern and $\vec v + x = \sigma_{-\vec v}(x)$.

If $p \in A^D$ is a pattern and $X$ a subshift, we write $p \sqsubset X$ if $X|D \ni p$ and say $p$ \emph{appears} in $X$. The patterns that appear in $X$ form its \emph{language}. If $p \in A^D, q \in A^E$, write $p \cup q \in A^{D \cup E}$ for the unique pattern $r$ such that $r|D = p, r|E = q$, if it exists. Write this as $p \sqcup q$ if $D \cap E = \emptyset$.
We define the \emph{follower set} of $p$ at $\vec v$ as
\[ \follow_X(p, E) = \{q \in A^E \;|\; p \cup q \cup X\} \]
We may write a vector $\vec v$ in place of $E$ to mean $\{\vec v\}$. IF $p \in A^D$, patterns $\follow_X(p, D \cup E)$ are also called \emph{extensions} of $p$ to $E$.

If $x \in A^{\Z^d}$, write $S_i(x)$ for the configuration $y \in A^{\Z^{d-1}}$ defined by $y_{\vec v} = x_{(\vec v, i)}$. The \emph{$(d-1)$}-trace of a subshift $X \subset A^{\Z^d}$ is $\{S_0(x) \;|\; x \in X\}$.

We need two simple lemmas for follower sets. The first is obvious and the second is proved by a short compactness argument.

\begin{lemma}
\label{lem:FollowMonotone}
If $p \in A^D$ and $D' \subset D$, then $\follow_X(p, E) \subset \follow_X(p|D', E)$.
\end{lemma}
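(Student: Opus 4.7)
The plan is to unfold the definitions directly: an element $q \in \follow_X(p, E)$ is an $E$-shaped pattern whose combined pattern $p \cup q$ (which is well-defined and belongs to the language of $X$) appears in $X$, and restricting the witnessing point to the smaller shape $D' \cup E$ should yield $(p|D') \cup q$, showing $q \in \follow_X(p|D', E)$.

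Concretely, I would start by fixing an arbitrary $q \in \follow_X(p, E)$, which means $p \cup q \sqsubset X$; unpacking this, there is a point $x \in X$ with $x|(D \cup E) = p \cup q$, so $x|D = p$ and $x|E = q$. Since $D' \subset D$, the restriction $x|D' = p|D'$, and combined with $x|E = q$ we have $x|(D' \cup E) = (p|D') \cup q$. In particular this pattern is well-defined (there is nothing to check about compatibility on $D' \cap E$ since both sides come from the single point $x$), and it appears in $X$, so by definition $q \in \follow_X(p|D', E)$.

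The only mild subtlety worth pausing on is confirming that $(p|D') \cup q$ makes sense in the notation of the paper, i.e., that $p|D'$ and $q$ agree on $D' \cap E$. This is automatic from the existence of $p \cup q$: on $D \cap E$ the patterns $p$ and $q$ must agree, and $D' \cap E \subset D \cap E$, so the agreement is inherited. I do not anticipate any genuine obstacle here; the whole argument is essentially a one-line restriction argument, which is why the paper flags it as obvious.
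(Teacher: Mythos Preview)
Your argument is correct and is exactly the obvious unfolding the paper has in mind; the paper does not give a proof, simply calling the lemma obvious, and your restriction argument is the natural one-liner behind that remark.
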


\begin{lemma}
\label{lem:FollowCompact}
If $p \in A^D$ and $E$ is finite, then there exists $D' \Subset D$ such that $\follow_X(p, E) = \follow_X(p|D', E)$.
\end{lemma}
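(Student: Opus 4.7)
The plan is to use a standard compactness-plus-pigeonhole argument, exploiting that $A^E$ is finite (so follower sets over $E$ live in a finite powerset) together with compactness of the ambient full shift.

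First I would observe that Lemma~\ref{lem:FollowMonotone} makes the assignment $D' \mapsto \follow_X(p|D', E)$ antitone on the directed poset of finite subsets of $D$ ordered by inclusion: enlarging $D'$ can only shrink the follower set. Since all these sets lie inside the finite set $A^E$, the intersection
\[ F = \bigcap_{D' \Subset D} \follow_X(p|D', E) \]
is actually attained by a single finite $D^* \Subset D$, i.e., $\follow_X(p|D^*, E) = F$. (If not, for each candidate $D'$ there would be some $D'' \supset D'$ shrinking the follower set strictly, giving an infinite strictly decreasing chain in the finite powerset $\pow(A^E)$, a contradiction.) This $D^*$ is the set I will show works.

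It remains to prove $F = \follow_X(p, E)$. The inclusion $\follow_X(p, E) \subset F$ is immediate from Lemma~\ref{lem:FollowMonotone} applied to each finite $D' \subset D$. For the reverse inclusion, fix $q \in F$. For every finite $D' \subset D$ the set
\[ Y_{D'} = \{x \in X \;|\; x|D' = p|D',\ x|E = q\} \]
is a closed subset of $X$ (intersection of finitely many cylinders with the closed set $X$), and by definition of $F$ it is nonempty. The family $\{Y_{D'}\}_{D' \Subset D}$ is directed downward ($D'' \supset D'$ gives $Y_{D''} \subset Y_{D'}$), so by compactness of $X \subset A^{\Z^d}$ the intersection $\bigcap_{D' \Subset D} Y_{D'}$ is nonempty. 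Any $x^*$ in this intersection satisfies $x^*|D' = p|D'$ for every finite $D' \subset D$, hence $x^*|D = p$, and $x^*|E = q$; thus $p \cup q \sqsubset X$, giving $q \in \follow_X(p, E)$.

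There is not really a hard step here: the only subtlety is making sure one does not confuse the two uses of finiteness, namely finiteness of $A^E$ (which yields stabilization of the decreasing family in $\pow(A^E)$ at some single finite $D^*$) and compactness of $A^{\Z^d}$ (which is what lets one upgrade ``consistent with $p$ on every finite piece'' to ``extends $p$ globally''). I would just make sure the write-up cleanly separates these two uses.
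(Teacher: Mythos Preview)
Your argument is correct and is exactly the ``short compactness argument'' the paper alludes to without spelling out: the paper gives no detailed proof of this lemma, only the remark that it follows from compactness, and your write-up is a clean execution of that. The separation you highlight between finiteness of $A^E$ (for stabilization at some $D^*$) and compactness of $A^{\Z^d}$ (for the finite-intersection-property step) is the right way to organize it.
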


A \emph{subshift of finite type} or \emph{SFT} is $X \subset A^{\Z^d}$ such that for some finite \emph{window} $W \Subset \Z^d$ and set of \emph{forbidden patterns} $P \subset A^W$ we have
\[ X = \{x \in A^{\Z^d} \;|\; \forall p \in P : p \not\sqsubset x \}. \]
Note that while we allow infinite patterns, forbidden patterns are finite.

A \emph{measure} on a subshift $X$ means a Borel probability measure. Equivalently, $\mu$ is a measure on $X$ if for each pattern $p \sqsubset X$, a number $\mu(p) \in [0,1]$ is associated, and if $p \in A^D$ and $\vec v \notin D$, then if we let
\[ B = \{q \in A^{D \cup \{\vec v\}} \;|\; q|D = P, q \sqsubset X\}, \]
then $\sum_{q \in B} \mu(q) = \mu(p)$. The connection to the usual notion of a measure is that $[p] = \{x \in X \;|\; x|D = p\}$ for the \emph{cylinder} defined by $p$, we have $\mu(p) = \mu([p])$.

Two subshifts $X, Y \subset A^{\Z^d}$ are \emph{conjugate} if there is a homeomorphism $\phi : X \to Y$ that intertwines the actions in the sense that $\phi \circ \sigma_{\vec v} = \sigma_{\vec v} \circ \phi$ for all $\vec v \in \Z^d$.

Recall that the \emph{topological entropy} of a subshift $X \subset A^{\Z^d}$ is defined as
\[ h(X) = \lim_n \frac{\log \# X|[0, n-1]^d}{n^d}. \]
More generally, we say that sets $F_i \Subset \Z^d$ form a \emph{F\o{}lner sequence} if for all $S \Subset \Z^d$, we have $|F_i \; \Delta \; F_i+S|/|F_i| \rightarrow 0$ where $F_i + S = \{\vec u + \vec v \;|\; \vec u \in F_i, \vec v \in S\}$. For any F\o{}lner sequence, the topological entropy can be calculated as
\[ h(X) = \lim_i \frac{\log \# X|F_i}{\# F_i}. \]

If $\mu$ is a shift-invariant measure on $X$, then its \emph{measure-theoretic entropy} (there are many equivalent definitions, this one is from \cite{QuTr00}) is 
\[ h_\mu(X) = \lim_n \frac{\sum_{p \in X|[0,n-1]^d} -\mu(p) \log \mu(p)}{n^d}. \]
Recall also the variational principle:
\[ h(X) = \max_{\mu} h_\mu(X) \]
where $\mu$ runs over the shift-invariant measures.

\subsection{Graphs}

A \emph{graph} is always directed, i.e.\ is $(V, E)$ where $E \subset V^2$. Elements of $V$ are \emph{vertices} of $G$, and those of $E$ \emph{edges} of $G$. Specifically, $(u, v) \in E$ is an edge \emph{from $u$ to $v$}. The \emph{in-degree} of $v \in V$ is $\#\{u \in V \;|\; (u, v) \in E\}$ and symmetrically define the \emph{out-degree}. If $(V_1, E_1), (V_2, E_2), \ldots, (V_n, E_n)$ are directed graphs, their \emph{edge-disjoint union} exists if $E_i \cap E_j = \emptyset$ whenever $i \neq j$, and is simply $(\bigcup_i V_i, \bigcup_i E_i)$. A \emph{subgraph} of $(V, E)$ is $(V', E')$ with $V' \subset V$ and $E' \subset E$.

Two directed graphs $(V, E)$ and $(V', E')$ are \emph{isomorphic} if there is a bijection $\phi : V \to V'$ such that $(u, v) \in E \iff (\phi(u), \phi(v)) \in E'$. A directed graph is a \emph{cycle} if it is isomorphic to $(\Z_n, \{(i, i+1) \;|\; i \in \Z_n\})$.

A \emph{bipartite graph} is a graph of the form $(A \sqcup B, E)$ where $A, B \neq \emptyset$ and $E \subset A \times B$. We call $A$ the \emph{left vertices} and $B$ the \emph{right vertices} (the choice may not be unique, but can be thought of as part of the structure).


\begin{lemma}
\label{lem:UnionOfCycles}
Let $G$ be a finite directed graph where for some $k$, every vertex has in- and out-degree $k$. Then $G$ is an edge-disjoint union of cycles.
\end{lemma}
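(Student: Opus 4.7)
The plan is to first note that the $k$-regularity is inessential: the statement holds for any finite directed graph in which every vertex has equal in- and out-degree. I would prove this strengthened statement by induction on the number of edges, since removing a single cycle produces a graph in this larger class even when the starting graph is $k$-regular (the vertices not on the cycle still have in-degree equal to out-degree, but it is no longer the same $k$ across vertices). So the cleanest proof is to generalize once, then induct.

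The base case of zero edges is the empty union of cycles. For the inductive step, I would extract a single cycle as follows. Pick any vertex $v_0$ with positive out-degree and walk along any unused outgoing edge. Extend greedily, producing a walk $v_0, v_1, \ldots, v_m$ in which no edge is traversed twice. The key counting claim is that if the current endpoint $v_m$ satisfies $v_m \neq v_i$ for all $i < m$, then the walk can always be extended. This is because for any vertex $v \neq v_0$ appearing in the walk, the number of times an incoming edge at $v$ has been used equals the number of times an outgoing edge at $v$ has been used, plus one (the step that arrived at $v_m$); so the number of used outgoing edges at $v_m$ is at most $d^-(v_m) - 1 = d^+(v_m) - 1$, leaving an unused outgoing edge. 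Since the graph is finite, the walk must eventually revisit some vertex; take $m$ minimal so that $v_m = v_i$ for some $i < m$, and then $v_i, v_{i+1}, \ldots, v_m$ is a cycle in the sense of the definition (isomorphic to $(\Z_{m-i}, \{(j, j+1)\})$), since $v_i, \ldots, v_{m-1}$ are pairwise distinct.

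Once a cycle $C$ is found, remove its edges. Each vertex on $C$ loses exactly one incoming and one outgoing edge (one at the step entering, one at the step leaving, and these are distinct edges even at a vertex $v_j$ that might coincide with no other $v_\ell$ on $C$, because a cycle visits each of its vertices once), while vertices off $C$ are unaffected. Hence the resulting graph still satisfies $d^- = d^+$ at every vertex, and has strictly fewer edges than $G$. By the inductive hypothesis, it decomposes as an edge-disjoint union of cycles, and adjoining $C$ yields the desired decomposition of $G$.

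The only delicate step is the degree-counting argument that the walk extends until it first repeats a vertex; everything else is bookkeeping. I would be careful there to phrase the count in terms of the signed difference (entered minus left) at each vertex along the walk, so that the case analysis is uniform and the sole obstruction to extension is returning to the start — and even then, returning to the start already gives us a closed walk from which a cycle can be extracted.
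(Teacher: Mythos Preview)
Your proof is correct and takes a genuinely different route from the paper's. The paper constructs the bipartite graph on two copies of $V$ with edges $(u,v)$ of $G$ drawn from the left copy of $u$ to the right copy of $v$, invokes Hall's marriage theorem (the $k$-regularity gives the Hall condition) to extract a perfect matching, observes that such a matching corresponds to a $1$-regular spanning subgraph of $G$ (hence a vertex-disjoint union of cycles), and then inducts on $k$. Your argument instead generalizes to balanced digraphs ($d^+ = d^-$ at every vertex), extracts a single cycle by a greedy walk and the standard in/out counting, and inducts on the number of edges. Your approach is more elementary and self-contained (no Hall), and the generalization to balanced digraphs is exactly what makes the induction close, as you note. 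The paper's approach, on the other hand, yields a slightly stronger structural statement for free: the edge set decomposes into $k$ spanning $1$-factors (permutations of $V$), not merely into cycles of arbitrary sizes. That extra structure is not used anywhere in the paper, so for the lemma as stated your argument is equally good. One minor comment: your degree-counting paragraph is phrased for a general trail, but since you stop at the \emph{first} repeated vertex, $v_m$ has been entered exactly once and left zero times, so the count is simpler than you make it sound.
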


\begin{proof}
Consider the bipartite graph with left vertices $V_L$, which are just a copy of vertices $G$, and right vertices $V_R$, which are also a copy of vertices of $G$, and an edge from $u \in V_L$ to $v \in V_R$ for each edge $(u, v)$ of $G$.

From the degrees we have the condition of Hall's theorem that the open neighborhood of any $A$ has cardinality at least $|A|$. Therefore, there is a perfect matching, i.e.\ a set of edges with exactly one edge from each $x \in V_L$, and with distinct endpoints in $V_R$. Since $|V_L| = |V_R|$, there is also exactly one incoming edge for each $y \in V_R$. This shows that $G$ has a subset of edges giving constant in- and out-degree $1$ to each node. Following these edges shows that this subgraph is a disjoint union of cycles. Removing them from the original graph $G$, we can proceed by induction with the in- and out-degrees decreased by one.
\end{proof}

\subsection{Geometric conventions}

Write $S_k = \Z^{d-1} \times \{k\}$. The \emph{$k$th slice} of a set $E \subset \Z^d$ is $E \cap S_k$. Define $T_{i,k} = S_i \cup S_{i+1} \cup S_{i+2} \cup \cdots \cup S_k$, $T_i = T_{0,i}$ and $T_\infty = \bigcup_i T_i$.

If $E \subset \Z^d$, write $\vec u + E = \{\vec u + \vec v \;|\; \vec v \in E\}$, and similarly for $E + \vec v$, $\vec v - E$, $E - \vec v$ (in particular, $-$ does not denote removal of elements). Write $e_i$ for the $i$th canonical basis vector so $\Z^d = \langle \{e_i \;|\; i = 1, \ldots, n\} \rangle$ as an additive abelian group.

For $\vec v \in \Z^d$ with last coordinate zero, write $\pi\vec v$ for the vector with the last zero coordinate removed, i.e.\ $\pi : S_0 \to \Z^{d-1}$ (where $S_0 \subset \Z^d$). We index coordinates of vectors $\vec v \in \R^d$ by positive numbers, so $\vec v = (\vec v_1, \ldots, \vec v_d)$. For $\vec u \in \R^{d-1}$ and $a \in \R$, write $(a, \vec u)$ for the vector $\vec v \in \R^d$ with $\vec v_1 = a$ and $\vec v_{i+1} = \vec u_i$ for $i \geq 1$. We define $(\vec u, a)$ symmetrically.

We write $\dist(\vec u, \vec v)$ for the Euclidean metric on $\R^d$, and for $A, B \subset \Z^d$ we use the conventions $\dist(A, B) = \inf_{\vec u \in A, \vec v \in B} \dist(\vec u, \vec v)$ and $\dist(\vec v, B) = \dist(\{\vec v\}, B)$. If $\vec v \in \Z^d$, the \emph{ball} of radius $r$ is $B_r(\vec v) = \{\vec u \in \Z^d \;|\; \dist(\vec v, \vec u) \leq r\}$, and $B_r = B_r(\vec 0)$. Note that our balls are always discrete.

Define $\vec 0^d$ as the zero vector of $\R^d$, or just $\vec 0$ with the dimension inferred from context. More generally write $\vec a$ for the all-$a$ vector. For the purpose of matrix multiplication (but for no other purpose), we see vectors are column vectors. (Alternatively, the reader can think of our matrices $M$ as just linear maps instead.) The dot product of two vectors is $\vec u \cdot \vec v = \sum_i \vec u_i \vec v_i$.

\section{More geometry}

In this section, we define a number of geometric terms and study their basic properties. In particular, we introduce convex sets, inductive intervals, half-spaces, and the notion of an extension (meaning a pair of sets).

As a general convention, we mention that in what follows, typically an unqualified adjective/noun will refer to a subset of $\Z^d$, while the $\R^d$ versions are called \emph{real}.

\emph{Real intervals} are connected subsets of $\R$ that are not equal to $\R$, and \emph{intervals} are intersections of geometric intervals with $\Z$. Note that intervals can be finite, or half-infinite in either direction, but $\Z$ is not considered an interval.

A \emph{rational closed halfspace} is $\bar H_{\vec u, r} = \{\vec v \in \Z^d \;|\; \vec v \cdot \vec u \geq r\}$ for some $\vec u \in \Z^d \setminus \{\vec 0\}$ called the \emph{direction}, and it is required that $\gcd(\vec u_1, \ldots, \vec u_d) = 1$. This $\gcd = 1$ requirement on $\vec u$ ensures that the choice of $\vec u, r$ is canonical. A \emph{rational open halfspace} $H_{\vec v, r}^\circ$ is defined similarly, but using instead the formula $\vec v \cdot \vec u > r$.

In fact, rational closed halfspaces and rational open halfspaces are easily seen to be exactly the same sets, but with a different choice of $r$ for the same $\vec u$. This is because $H_{\vec v, r}^\circ = \bar H_{\vec v, r+1}$, which holds since we interpret the vectors in $\Z^d$, and dot products are integer-valued. Thus we also refer to rational closed halfspaces and rational open halfspaces also as just \emph{rational halfspaces} when the representation does not matter. A \emph{hyperplane} is a set of the form $\partial H_{\vec v, r} = \bar H_{\vec v, r} \setminus H_{\vec v, r}^\circ$.

A subset of $\R^d$ is \emph{real convex} if every straight line between points of the set stays in the set, and the \emph{real convex hull} of a set $A \subset \R^d$, denoted $\CHull(A)$, is the intersection of all real convex sets containing $A$. A subset of $\Z^d$ is \emph{convex} if it is the intersection of a real convex set with $\Z^d$. The following is immediate.

\begin{lemma}
A set $C \subset \Z^d$ is convex if and only if $C = \CHull(C) \cap \Z^d$.
\end{lemma}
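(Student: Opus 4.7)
The proof plan is essentially just to unwind the definitions. Both directions are one-liners once one observes that $\CHull(C)$ is, by definition, the smallest real convex set containing $C$.

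For the ``if'' direction, assume $C = \CHull(C) \cap \Z^d$. Then $\CHull(C)$ is a real convex set (by definition of the real convex hull as an intersection of real convex sets, noting that intersections of real convex sets are real convex), so $C$ is exhibited as the intersection of a real convex set with $\Z^d$, hence $C$ is convex in the sense defined.

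For the ``only if'' direction, assume $C$ is convex, so that $C = K \cap \Z^d$ for some real convex $K \subset \R^d$. The inclusion $C \subset \CHull(C) \cap \Z^d$ is immediate since $C \subset \Z^d$ and $C \subset \CHull(C)$. For the reverse inclusion, note that $C \subset K$ and $K$ is real convex, so by the minimality defining property of $\CHull$ we have $\CHull(C) \subset K$; intersecting with $\Z^d$ yields $\CHull(C) \cap \Z^d \subset K \cap \Z^d = C$.

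There is no real obstacle here; the only subtlety worth flagging is that the argument uses two trivial facts about $\CHull$: that it is itself a real convex set (so can serve as the ``witness'' real convex set on the right-hand side), and that it is contained in every real convex set containing $C$ (used to bound it by $K$). Both are immediate from the definition of $\CHull(C)$ as the intersection of all real convex sets containing $C$, which is itself real convex because an arbitrary intersection of real convex sets is real convex.
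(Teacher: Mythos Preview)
Your proof is correct and is exactly the kind of direct unwinding of the definitions that the paper has in mind; the paper itself simply declares the lemma ``immediate'' and gives no further argument. There is nothing to add.
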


A \emph{real polytope} in $\R^d$ is a compact set that is an intersection of finitely many closed halfspaces, or equivalently the convex hull of finitely many points \cite{Gr03}. A \emph{polytope} in $\Z^d$ is an intersection of a real polytope with $\Z^d$. The previous lemma shows that every finite convex set is a polytope.

The \emph{inductive intervals} are defined inductively. In $\Z$, the inductive intervals are the \emph{zero-grazing intervals}, meaning intervals (recall that for us, intervals can be half-infinite) such that $I \not\ni 0$, and $I \cup \{0\}$ is also an interval. In $\Z^d$, they are the sets of the form $(C \times \{\vec 0^{d-1}\}) \cup (\Z \times D)$ where $C$ is an inductive interval in $\Z$ and $D$ is an inductive interval in $\Z^{d-1}$. Note that $\emptyset$ is an inductive interval. Note also that inductive intervals are not a shift-invariant class of sets.

Another equivalent description, the \emph{interval list description}, of inductive intervals is that $E$ is an inductive interval if and only if for some zero-grazing intervals $I_1, \ldots, I_d$ in $\Z$, we have 
\[ E = \{\vec v \in \Z^d \;|\; (v_i \neq 0 \wedge \forall j > i: \vec v_j = 0) \implies \vec v_i \in I_i\}. \]

An \emph{extension} is a pair of sets $(E, E') \in \pow(\Z^d)^2$ where $E \subset E'$. If $\PP$ is a family of subsets of $\Z^d$, a \emph{$\PP$-extension} is an extension $(E, E')$ where both $E$ and $E'$ are in $\PP$ up to a shift. A $\PP$-extension $(E, E')$ is \emph{minimal} if there is no set $D \in \PP$ strictly between $E$ and $E'$. A \emph{singleton $\PP$-extension} is a $\PP$-extension $(E, E')$ where $|E' \setminus E| = 1$.
In particular, this gives meaning to the 12 terms \emph{(minimal/singleton) inductive interval extensions}, \emph{(minimal/singleton) convex extensions} and \emph{(minimal/singleton) rational halfspace extensions}.

\begin{lemma}
If $C$ is an inductive interval, then $C$ and $C \cup \{\vec 0\}$ are convex sets.
\end{lemma}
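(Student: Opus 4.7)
I would prove the lemma by induction on $d$, proving both statements (``$C$ convex'' and ``$C \cup \{\vec 0^d\}$ convex'') simultaneously. The base case $d = 1$ is immediate from the definitions: a zero-grazing interval $C \subseteq \Z$ is an interval (hence convex by the paper's convention), and $C \cup \{0\}$ is an interval by the zero-grazing condition.

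For the inductive step, I would write the inductive interval as $C = (I \times \{\vec 0^{d-1}\}) \cup (\Z \times D)$ with $I \subseteq \Z$ zero-grazing and $D \subseteq \Z^{d-1}$ inductive; by induction, both $D$ and $D \cup \{\vec 0^{d-1}\}$ are convex. Applying the standard formula $\CHull(A \cup B) = \{t\vec a + (1-t)\vec b \mid \vec a \in \CHull(A),\ \vec b \in \CHull(B),\ t \in [0,1]\}$ to $A = I \times \{\vec 0\}$ and $B = \Z \times D$ yields
\[
\CHull(C) = (\CHull(I) \times \{\vec 0^{d-1}\}) \cup (\R \times K^\circ),
\qquad K^\circ := \bigcup_{\mu \in (0,1]} \mu\,\CHull(D).
\]
Intersecting with $\Z^d$ gives $(I \times \{\vec 0^{d-1}\}) \cup (\Z \times (K^\circ \cap \Z^{d-1}))$, so convexity of $C$ reduces to the identity $K^\circ \cap \Z^{d-1} = D$.

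To establish this identity, I would apply the same $\CHull$ formula to $\{\vec 0\} \cup D$ to get $K^\circ \cup \{\vec 0^{d-1}\} = \CHull(D \cup \{\vec 0^{d-1}\})$, whose integer points equal $D \cup \{\vec 0^{d-1}\}$ by the inductive hypothesis. Then I would split on whether $\vec 0^{d-1} \in D$: if yes, then $\vec 0 \in \CHull(D) \subseteq K^\circ$, so $K^\circ \cap \Z^{d-1} = D \cup \{\vec 0\} = D$; if no, then $\vec 0 \notin \CHull(D)$ by convexity of $D$, so $\mu \vec w \neq \vec 0$ for all $\mu \in (0,1]$ and $\vec w \in \CHull(D)$, giving $\vec 0 \notin K^\circ$ and $K^\circ \cap \Z^{d-1} = (D \cup \{\vec 0\}) \setminus \{\vec 0\} = D$. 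Either way the identity holds, proving $C$ convex. For $C \cup \{\vec 0^d\}$, I would note the identity $C \cup \{\vec 0^d\} = ((I \cup \{0\}) \times \{\vec 0^{d-1}\}) \cup (\Z \times D)$, valid in both cases above, with $I \cup \{0\}$ an ordinary interval (hence convex in $\Z$), and then rerun the same argument.

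The step I expect to be the main obstacle is the identity $K^\circ \cap \Z^{d-1} = D$: a priori the cone $K^\circ$ could contain $\vec 0$ as a stray integer point arising from scaling, and it is precisely the stronger inductive statement (convexity of $D \cup \{\vec 0\}$, not merely of $D$) that prevents this. This is the reason the two assertions of the lemma must be proved in tandem rather than sequentially.
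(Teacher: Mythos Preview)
Your proof is correct and follows essentially the same inductive strategy as the paper: both analyze the last $d-1$ coordinates of a convex combination of a point from $I \times \{\vec 0\}$ and one from $\Z \times D$, and both need that an integer vector of the form $(1-t)\vec d$ (with $\vec d \in \CHull(D)$, $t<1$) already lies in $D$. The paper records this step as ``either the last $d-1$ coordinates give a vector in $D$ \ldots or elements of $B$ are not used at all'' without further justification, whereas you make explicit---via the identity $K^\circ \cap \Z^{d-1} = D$---that it rests on the full inductive hypothesis (convexity of $D \cup \{\vec 0\}$, not merely of $D$), a point the paper leaves implicit.
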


\begin{proof}
In dimension $1$, this is clear. Suppose $C \subset \Z, D \subset \Z^{d-1}$ and $A \cup B$ is an inductive interval, where $A = C \times \{\vec 0^{d-1}\}$ and $B = \Z \times D$. By induction, $D$ and $C$ are convex. It suffices to show that any real convex combination of finitely many elements from $A \cup B$ stays in $A \cup B$.

Since $A, B$ are convex, any such combination is a linear combination of an element of $\CHull(A) = \CHull(C) \times \{0\}^{d-1}$ and $\CHull(B) = \R \times \CHull(D)$. But clearly either the last $d-1$ coordinates give a vector in $D$, in which case, since the linear combination is an integer vector, the result is in $B$; or elements of $B$ are not used at all, and the result is in $A$. Thus, in both cases the result is in $A \times B$.
\end{proof}

The following lemma contains some observations about extensions of various types.

\begin{lemma}
\label{lem:ExtensionStuff}
\begin{itemize}
\item If $(E, E')$ is a minimal convex extension, then $|E' \setminus E| = 1$.
\item The minimal rational halfspace extensions are precisely ones of the form $E = H_{\vec v, r}^\circ$ and $E' = \bar H_{\vec v, r}$ for some $\vec v \in \Z^d$ and $r \in \Z$.
\item If $(E, E')$ is a singleton inductive interval extension with $E' \setminus E = \{\vec v\}$, then $-\vec v + E$ is an inductive interval, and either $- e_1 -\vec v + E'$ or $e_1 + -\vec v + E'$ is an inductive interval.
\end{itemize}
\end{lemma}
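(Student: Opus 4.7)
My plan is to prove the three items in turn.

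For item 1, I argue contrapositively. Assuming $|E' \setminus E| \geq 2$, I will exhibit a convex set strictly between $E$ and $E'$. Pick distinct $\vec s, \vec s' \in E' \setminus E$ and set $F = \CHull(E \cup \{\vec s\}) \cap \Z^d$ and $F' = \CHull(E \cup \{\vec s'\}) \cap \Z^d$. Both are convex, both strictly contain $E$, and both sit inside $E'$ because $E'$ is convex. If either is strictly smaller than $E'$, it is the desired intermediate. Otherwise $F = F' = E'$, which forces $\vec s' \in \CHull(E \cup \{\vec s\})$ and $\vec s \in \CHull(E \cup \{\vec s'\})$, i.e.\ $\vec s' = \mu \vec s + (1 - \mu) \vec u$ and $\vec s = \mu' \vec s' + (1 - \mu') \vec u'$ with $\vec u, \vec u' \in \CHull(E)$ and $\mu, \mu' \in (0, 1)$ (the extremes $\mu, \mu' \in \{0, 1\}$ are ruled out by $\vec s \neq \vec s'$ and by $\vec s, \vec s' \notin E = \CHull(E) \cap \Z^d$). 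Substituting the first relation into the second expresses $\vec s$ as a genuine convex combination of $\vec u, \vec u' \in \CHull(E)$, so $\vec s \in \CHull(E) \cap \Z^d = E$, contradicting $\vec s \in E' \setminus E$.

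For item 2, I first observe that two rational halfspaces with non-parallel primitive directions cannot be in a containment relation (if $\vec u, \vec u'$ are linearly independent, or if $\vec u = -\vec u'$, then each halfspace contains points arbitrarily deep on its own side, so neither is a subset of the other). Hence $E$ and $E'$ share a direction $\vec u$; writing $E = \bar H_{\vec u, r_1}$ and $E' = \bar H_{\vec u, r_2}$ the condition $E \subsetneq E'$ gives $r_2 < r_1$. Every intermediate rational halfspace has the same direction, so is $\bar H_{\vec u, r}$ for some $r_2 < r < r_1$, and minimality forces $r_1 = r_2 + 1$. The identity $H_{\vec u, r}^\circ = \bar H_{\vec u, r+1}$ recalled earlier in the paper rewrites this as $E = H_{\vec u, r_2}^\circ$ and $E' = \bar H_{\vec u, r_2}$.

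For item 3 the approach is structural. Since the quantities $-\vec v + E$ and $\pm e_1 - \vec v + E'$ are invariant under simultaneously shifting $E, E', \vec v$ by a common vector, I may assume $E'$ is itself an inductive interval $(C \times \{\vec 0^{d-1}\}) \cup (\Z \times D)$ with $C \subset \Z$ zero-grazing and $D \subset \Z^{d-1}$ inductive. The key step is to pin down $\vec v \in E'$. First, $\vec v$ cannot be of the form $(z, \vec u)$ with $\vec u \in D$: removing such a point would leave $\Z \setminus \{z\}$ as the $\vec u$-row of $E$, which is neither all of $\Z$ nor an interval, so no shift of $E$ admits a decomposition $(C'' \times \{\vec 0^{d-1}\}) \cup (\Z \times D'')$ and $E$ would fail to be inductive up to shift. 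So $\vec v = (z, \vec 0^{d-1})$ with $z \in C$, and the same requirement—namely that $C \setminus \{z\}$, which plays the role of the ``$C$-row'' of $E$, be shift-equivalent to a zero-grazing interval and hence connected—forces $z \in \{\min C, \max C\}$.

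Granted this structural claim, the two conclusions are direct computations using $\Z + s = \Z$. I get $-\vec v + E = (((C - z) \setminus \{0\}) \times \{\vec 0^{d-1}\}) \cup (\Z \times D)$; since $z$ is an endpoint of the interval $C$, the shifted set $C - z$ is an interval with $0$ as an endpoint, so $(C - z) \setminus \{0\}$ is zero-grazing and $-\vec v + E$ is inductive. Similarly $\pm e_1 - \vec v + E' = ((C - z \pm 1) \times \{\vec 0^{d-1}\}) \cup (\Z \times D)$, and choosing the sign opposite to the side of $0$ occupied by $C - z$—that is, $+e_1$ when $z = \min C$ and $-e_1$ when $z = \max C$—keeps $C - z \pm 1$ zero-grazing. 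I anticipate the main obstacle will be the structural claim about the location of $\vec v$: justifying it requires uniqueness of the inductive-interval decomposition and ruling out every alternative shift that might absorb the puncture elsewhere in the structure.
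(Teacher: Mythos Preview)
Your proposal is correct and follows essentially the same route as the paper's proof: for item~2 the arguments are identical, and for item~3 both you and the paper first localize the removed vector to the ``$C$-part'' of $E'$, show it must sit at an endpoint of $C$, and then shift to normalize. The only noteworthy differences are technical. In item~1 the paper derives the contradiction from a distance argument (if $b \in \CHull(E \cup \{a\})$ then $\dist(b,E) < \dist(a,E)$, and symmetry gives $\dist(a,E) < \dist(b,E)$), whereas your algebraic substitution $\vec s = \mu'\mu\,\vec s + \mu'(1-\mu)\vec u + (1-\mu')\vec u'$ directly places $\vec s$ in $\CHull(E)$; your version is arguably cleaner and avoids the unproved ``easy to show'' claim. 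In item~3 the paper invokes convexity of inductive intervals to rule out removing a point from the $\Z \times D$ part, while you use the row structure directly (every fiber of a shift of an inductive interval over $\Z^{d-1}$ is empty, an interval, or $\Z$); this row observation also dispatches the ``obstacle'' you flag at the end, since it is shift-invariant and independent of any choice of decomposition, so no further uniqueness argument is needed.
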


\begin{proof}
\textbf{Minimal convex extensions.} Suppose $(E, E')$ is a minimal convex extension. Let $a \in E' \setminus E$ be arbitrary. Then $\CHull(E \cup \{a\}) \supset E'$ since the extension is minimal, as otherwise we could take $D = \CHull(E \cup \{a\}) \cap \Z^d$ as an intermediate convex set. In particular, if $|E' \setminus E| > 1$, then $b \in (\CHull(E \cup \{a\}) \cap \Z^d) \setminus E$ for some $b \in E' \setminus E$. But it is easy to show that $\dist(b, E) < \dist(a, E)$ in this case. Thus we cannot have $a \notin \CHull(E \cup \{b\}) \cap \Z^d$, because a symmetric argument gives the contradiction $\dist(a, E) < \dist(b, E) < \dist(a, E)$. We conclude that $(E, E')$ must be a singleton extension.

\textbf{Minimal rational halfspace extensions.} Suppose then $(E, E')$ is a minimal rational halfspace extension. Observe that $H_{\vec v, r}^\circ = \bar H_{\vec v, r+1}$, so we may assume $E = H_{\vec v, r}^\circ$ and $E' = \bar H_{\vec u, r'}$ for some $\vec v, \vec u, r, r'$. Clearly $\vec u$ and $\vec v$ must be parallel, so in fact they must be equal (because of the $\gcd = 1$ requirement on the $\vec u_i$). Since the extension is minimal, clearly $r' = r$. It is also clear that $(H_{\vec v, r}^\circ, \bar H_{\vec v, r})$ is always a rational halfspace extension.

\textbf{inductive interval extensions.} Suppose $(E, E')$ is a singleton inductive interval extension. We start by observing that since $E'$ is the shift by some $\vec v = (a, \vec u)$ (where $a \in \Z, \vec u \in \Z^{d-1}$) of an inductive interval, we have $E' = A \sqcup B$ where $A = (a + C) \times \{\vec u\}$ where $C$ is a zero-grazing interval, and $B = \Z \times (\vec u + D)$ for $D$ an inductive interval in dimension $d-1$.

Observe that since $|E' \setminus E| = 1$, there is a vector in $E'$ that can be dropped so that the result is still a shift of an inductive interval. Clearly, this vector must be in $A$. Namely, if it were in $B$, then (because the union $A \cup B$ is disjoint) for some $\vec w \in \vec u + D$, $E$ would contain $((-\infty, b-1] \cup [b+1, \infty)) \times \{\vec w\}$ without containing $(b, \vec w)$, contradicting convexity of inductive intervals.

If the vector is in $A$, then $E = ((a + C') \times \{\vec u\}) \cup B$ where $C'$ is an interval, and $C = C' \cup \{n\}$ for some $n \in \Z$. Now let $\vec w = (-n - a, -\vec u)$ and replace $(E, E')$ by $(\vec w + E, \vec w + E')$ (noting that the original claim is invariant under shifting $E, E'$ simultaneously). Then $E' \setminus E = \{\vec 0\}$,
\[ E' = ((C-n) \times \{\vec 0\}) \cup (\Z \times D) \]
(note that $C-n$ is translation, not element removal) and
\[ E = ((C'-n) \times \{\vec 0\}) \cup (\Z \times D). \]
Since $D$ is an inductive interval and $C-n = (C'-n) \cup \{0\}$ is an interval, $C'-n$ is a zero-grazing interval, and thus $E$ is an inductive interval. If $C'-n \subset (0, \infty)$ then $E' + e_1$ is an inductive interval, and if $C'-n \subset (-\infty, 0)$ then $E' - e_1$ is an inductive interval.
\end{proof}

There are some subtleties here. It is \emph{not} true that to every convex set $C$ in $\Z^2$ you can add a single vector while remaining convex (but in this case, by the previous lemma there is no minimal convex extension of the form $(C, C')$). For finite convex sets, you can always find a singleton extension \cite{Sa22d}. In the case of inductive intervals, every inductive interval has a singleton extension, but one can find minimal inductive interval extensions which are not singleton extensions. (We omit the examples, as we do not strictly need these facts.)

%

The \emph{slantedness} of a vector $\vec u \in \Z^d$ is defined only for vectors with positive coordinates, and is the vector $(\vec u_2/\vec u_1, \vec u_3/\vec u_2, \ldots, \vec u_d/\vec u_{d-1})$. If $\vec v \in \R^{d-1}$, we say $\vec u$ is \emph{$\vec v$-slanted} if $\vec u_{i+1}/\vec u_i \geq \vec v_i$ for all $i$, i.e.\ its slantedness is coordinatewise greater than or equal to $\vec v$. A $\vec v$ used to give such a requirement is called a \emph{slant vector}. We often say a vector is \emph{sufficiently slanted}, to mean that it is $\vec v$-slanted for some finite set of vectors $\vec v$ that arise in a proof. The \emph{slantedness} of a hyperspace is the slantedness of its direction.

\begin{remark}
\label{rem:SlantednessJoin}
Note that any finite set of slantedness requirements $\vec v^1, \vec v^2, \ldots, \vec v^n$ can be joined into a single one by setting $\vec v_i = \max_j v^j_i$. Note also that a $\vec a$-slanted vector for $a > 1$ has $\vec u_j$ larger than $\vec u_i$ whenever $j > i$, so we may always assume that a sufficiently slanted vector $\vec v$ will be increasing when seen as a sequence of positive integers $(\vec v_1, \vec v_2, \ldots, \vec v_d)$. 
\end{remark}

\begin{lemma}
\label{lem:SlantednessOfSlice}
If $\vec u$ is sufficiently slanted, so is the vector $\vec u'$ obtained by removing the last coordinate. In particular, if a halfspace $H \subset \Z^d$ is sufficiently slanted, so is $\pi(S_0 \cap H)$.
\end{lemma}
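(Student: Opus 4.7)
The plan is to handle the two statements in order, with the halfspace claim reducing to the vector claim.

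For the vector claim, I would start from the definition: the slantedness of $\vec u \in \Z^d$ (with positive coordinates) is the length-$(d-1)$ vector $(\vec u_2/\vec u_1, \vec u_3/\vec u_2, \ldots, \vec u_d/\vec u_{d-1})$, and removing the last coordinate of $\vec u$ simply discards the last entry of its slantedness. Thus the slantedness of $\vec u'$ is precisely the first $d-2$ entries of the slantedness of $\vec u$. Given any finite list of slant vectors $\vec v^1, \ldots, \vec v^n \in \R^{d-2}$ that we want $\vec u'$ to satisfy, I would extend each to a slant vector $(\vec v^j, 0) \in \R^{d-1}$ and impose those requirements on $\vec u$; then $\vec u$ being $(\vec v^j, 0)$-slanted immediately yields $\vec u'$ being $\vec v^j$-slanted. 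In view of Remark~\ref{rem:SlantednessJoin}, this is exactly what ``sufficiently slanted'' demands.

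For the halfspace claim, write $H = \bar H_{\vec u, r}$ with $\vec u$ having positive coordinates (as required for slantedness to be defined). A point $(\vec w, 0) \in S_0$ lies in $H$ iff $\sum_{i=1}^{d-1} \vec w_i \vec u_i \geq r$, since the last coordinate drops out of the dot product. Hence
\[ \pi(S_0 \cap H) = \{\vec w \in \Z^{d-1} \;|\; \vec w \cdot \vec u' \geq r\}, \]
where $\vec u' = (\vec u_1, \ldots, \vec u_{d-1})$. To rewrite this in the canonical gcd-one form required by our definition of rational halfspaces, set $g = \gcd(\vec u_1, \ldots, \vec u_{d-1})$; then $\pi(S_0 \cap H) = \bar H_{\vec u'/g, \lceil r/g \rceil}$, and the coordinates of $\vec u'/g$ are coprime positive integers. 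Because $g$ is a positive integer, dividing all coordinates by it preserves every consecutive ratio $\vec u_{i+1}/\vec u_i$, so the slantedness of the direction of $\pi(S_0 \cap H)$ equals the slantedness of $\vec u'$, and the first part completes the reduction.

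I do not anticipate any real obstacle; both claims reduce to the definitional fact that truncating the last coordinate of a positive-integer vector truncates the last entry of its slantedness. The only point worth verifying carefully is the normalization step for the halfspace direction: one must confirm that dividing by the positive scalar $g$ does not perturb the slantedness, which is immediate because slantedness is defined through ratios.
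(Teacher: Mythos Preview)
Your argument is correct and follows the same approach as the paper: extend each target slant vector in $\R^{d-2}$ by appending a coordinate to obtain a requirement in $\R^{d-1}$, and for the halfspace claim observe that $\pi(S_0 \cap \bar H_{\vec u, r})$ is the halfspace with direction $\vec u'$. You are in fact more careful than the paper, which simply writes $\pi(S_0 \cap H_{\vec u, r}) = H_{\vec u', r}$ without addressing the $\gcd = 1$ normalization; your observation that dividing by $g$ leaves all consecutive ratios unchanged is exactly what is needed to close that small gap.
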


More precisely, given any slant vector $\vec v' \in \R^{d-2}$ (describing lower bounds on $\vec u_{i+1}'/\vec u_i'$), we can find a slant vector $\vec v \in \R^{d-1}$ (describing sufficient lower bounds on $\vec u_{i+1}/\vec u_i$) to guarantee that $\vec u'$ is $\vec v'$-slanted. The proof of the first sentence is trivial ($\vec v$ is obtained from $\vec v'$ by adding any coordinate at the end), and for the second we observe $\pi(S_0 \cap H_{\vec v, r}) = H_{\vec v', r}$.


\begin{lemma}
\label{lem:BoundaryTranslation}
For all $\vec u \in \partial H_{\vec v, 0}$, the sets $H_{\vec v, r}^\circ$ and $\bar H_{\vec v, r}$ are closed under translation by $\vec u$ for all $r \in \Z$.
\end{lemma}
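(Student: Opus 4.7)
The plan is to unwind the definitions and use linearity of the dot product; there is no real obstacle here, as the claim is essentially the statement that translation by a vector orthogonal to the normal of a halfspace preserves the defining inequality.

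First, I would identify what $\vec u \in \partial H_{\vec v, 0}$ means. By definition, $\partial H_{\vec v, 0} = \bar H_{\vec v, 0} \setminus H_{\vec v, 0}^\circ$, so $\vec u \cdot \vec v \geq 0$ and $\vec u \cdot \vec v \not> 0$; thus $\vec u \cdot \vec v = 0$.

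Second, I would verify closure under translation directly. For any $\vec w \in \bar H_{\vec v, r}$, linearity gives
\[ (\vec w + \vec u) \cdot \vec v \;=\; \vec w \cdot \vec v + \vec u \cdot \vec v \;=\; \vec w \cdot \vec v + 0 \;\geq\; r, \]
so $\vec w + \vec u \in \bar H_{\vec v, r}$. The identical computation with $\geq$ replaced by $>$ shows that $\vec w + \vec u \in H_{\vec v, r}^\circ$ whenever $\vec w \in H_{\vec v, r}^\circ$. This handles both halfspaces for every $r \in \Z$, completing the proof.
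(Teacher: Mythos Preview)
Your proof is correct and follows exactly the same approach as the paper: identify that $\vec u \in \partial H_{\vec v,0}$ means $\vec u \cdot \vec v = 0$, and then observe that adding such $\vec u$ leaves the dot product with $\vec v$ unchanged. The paper's version is simply terser.
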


\begin{proof}
The assumption $\vec u \in \partial H_{\vec v, 0}$ means precisely $\vec u \cdot \vec v = 0$. Adding such a vector to $\vec w$ does not affect its dot product with $\vec v$.
\end{proof}

We now generalize halfspaces to slants where the last coordinate may be infinite, though we emphasize that ``sufficiently slanted'' will never refer to such generalized slants. If $\vec u \in \Z^{d-1} \times \{\pm\infty\}$, we define $\bar H_{\vec u, r} = \{\vec v \in \Z^d \;|\; \vec v \cdot \vec u \geq r\}$ with the same formula but with the usual extension of addition to infinite values. In other words, if $\vec u_d = (-1)^a \infty$ and $(-1)^b \vec v_d > 0$ then $\vec v \cdot \vec u = (-1)^{a+b} \infty$, and $-\infty < n < \infty$, and $n + \pm\infty = \pm\infty + n = \pm\infty - n = \pm\infty$ for all $n \in \Z$. Note that $\pm\infty$ can appear only once in the sum, so the sum is well-defined. We analogously define the open version $H_{\vec u, r}^\circ$.



\begin{lemma}
\label{lem:Inside}
For all $r$, if $\vec u$ is sufficiently slanted, then for all $\vec v \in \bar H_{\vec u, 0} \cap S_0$ we have $B_r(\vec v) \cap T_{0,\infty} \cap \bar H_{\vec u, 0} = B_r(\vec v) \cap T_{0,\infty} \cap \bar H_{\vec u', 0}$, where $\vec u'$ is obtained from $\vec u$ by replacing the last coordinate with $\infty$. The same holds for the open variants.
\end{lemma}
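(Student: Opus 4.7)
My plan is to unpack what $\bar H_{\vec u',0}$ looks like on $T_{0,\infty}$, then do a simple case split on the last coordinate $\vec w_d$ of a candidate point $\vec w \in B_r(\vec v) \cap T_{0,\infty}$. Since $\vec u' = (\vec u_1,\ldots,\vec u_{d-1},\infty)$, the convention on infinite dot products gives
\[
\bar H_{\vec u',0} \cap T_{0,\infty} \;=\; T_{1,\infty} \;\sqcup\; \bigl\{\vec w \in S_0 : \textstyle\sum_{i<d} \vec w_i \vec u_i \geq 0\bigr\},
\]
while $\bar H_{\vec u,0} \cap T_{0,\infty}$ consists of $\vec w \in T_{0,\infty}$ with $\vec w \cdot \vec u \geq 0$.

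Fix $\vec w \in B_r(\vec v) \cap T_{0,\infty}$. If $\vec w_d = 0$, then $\vec w \cdot \vec u = \sum_{i<d}\vec w_i \vec u_i$ and $\vec w \in \bar H_{\vec u',0}$ iff this same sum is $\geq 0$, so both intersections contain $\vec w$ under the same condition. If $\vec w_d \geq 1$, then $\vec w \in \bar H_{\vec u',0}$ automatically, and the only content is showing $\vec w \cdot \vec u \geq 0$. For this I would write
\[
\vec w \cdot \vec u \;=\; \vec v \cdot \vec u \;+\; \sum_{i<d}(\vec w - \vec v)_i \vec u_i \;+\; \vec w_d\, \vec u_d,
\]
using $\vec v_d = 0$. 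The first term is $\geq 0$ by hypothesis on $\vec v$, the middle term is bounded below by $-r\sum_{i<d}\vec u_i$ (since $\|\vec w - \vec v\|\leq r$), and the last term is at least $\vec u_d$.

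So it suffices to force $\vec u_d > r \sum_{i<d} \vec u_i$. By Remark~\ref{rem:SlantednessJoin}, after slantedness we may assume $\vec u_1 < \vec u_2 < \cdots < \vec u_d$ are positive integers, so $\sum_{i<d} \vec u_i \leq (d-1)\vec u_{d-1}$, and the inequality follows from the single slantedness requirement $\vec u_d / \vec u_{d-1} > r(d-1)$. This is the one slantedness bound we impose in the lemma; combining with any prior requirements via Remark~\ref{rem:SlantednessJoin} handles "sufficiently slanted". The open version $H_{\vec u,0}^\circ$ is identical since, in the $\vec w_d \geq 1$ case, we in fact obtain $\vec w \cdot \vec u > 0$, while the $\vec w_d = 0$ case again reduces to the same inequality on both sides.

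The only real obstacle is careful bookkeeping of signs and the fact that slantedness is used precisely to make the $\vec u_d$-term dominate the $O(r)$ error introduced by moving from $\vec v$ to $\vec w$ inside $B_r(\vec v)$; everything else is definitional unpacking.
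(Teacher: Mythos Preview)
Your proof is correct and follows essentially the same approach as the paper: both case-split on whether the point lies in $S_0$ or in $T_{1,\infty}$, observe that the $S_0$ case is trivial since the last coordinate does not enter the dot product, and in the $T_{1,\infty}$ case bound $(\vec w - \vec v)\cdot \vec u$ using that the offset has coordinates of size at most $r$ and that slantedness makes $\vec u_d$ dominate $\sum_{i<d}\vec u_i$. Your version is slightly more explicit about the precise slantedness constant needed, but otherwise the arguments are the same.
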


\begin{proof}
Suppose $\vec v \in \bar H_{\vec u, 0} \cap S_0$, i.e.\ $\vec v \in S_0$ and $\vec v \cdot \vec v \geq 0$.

We check the equality separately in $S_0$ and elsewhere (i.e.\ apply Lemma~\ref{lem:SetInclusionCases} to the slice $S_0$). In the slice, clearly
\[ S_0 \cap B_r \cap T_{0,\infty} \cap \bar H_{\vec u, 0} = S_0 \cap B_r \cap T_{0,\infty} \cap \bar H_{\vec u', 0}, \]
since the dot products do not involve the last coordinate.

Suppose then $\vec w \in S_0^c \cap B_r \cap T_{0,\infty}$, so $\vec w_d > 0$. Then obviously $\vec v + \vec w$ belongs to $\bar H_{\vec u', 0}$, since $\vec u_d' = \infty$ implies an infinite dot product for $(\vec v + \vec w) \cdot \vec u'$. In the other hand, if $\vec u$ is sufficiently slanted, then
\begin{align*}
(\vec v + \vec w) \cdot \vec u &= (\vec v \cdot \vec u) + (\vec w_d \cdot \vec u_d + \sum_{i < d} \vec w_i \cdot \vec u_i) \\
&\geq 0 + \vec u_d - m \sum_{i < d} |\vec u_i| \geq 0
\end{align*}
where $m = \max_{i < d} |\vec w_i|$. Thus, $\vec v + \vec w$ also belongs to $\bar H_{\vec u, 0}$.
\end{proof}




\begin{lemma}
\label{lem:HalfSpaceThing}
Let $H \subset \Z^d$ be a rational half-space whose direction is not $-e_d$. Then $H$ is a countable increasing union of shifts of $H \cap T_{i,\infty}$.
\end{lemma}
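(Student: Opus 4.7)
The plan is to find a vector $\vec a \in \Z^d$ satisfying $\vec a \cdot \vec u = 0$ and $\vec a_d > 0$, where $\vec u$ is the direction of $H$, and to take the sequence $K_n := (H \cap T_{i, \infty}) - n\vec a$. Because $\vec a$ lies on the boundary hyperplane $\partial H_{\vec u, 0}$, Lemma~\ref{lem:BoundaryTranslation} yields $H - n\vec a = H$, and because $T_{j, \infty}$ depends only on the last coordinate, $T_{i, \infty} - n\vec a = T_{i - n\vec a_d, \infty}$. Thus $K_n = H \cap T_{i - n\vec a_d, \infty}$ is increasing in $n$ with union equal to $H$, since $\bigcup_n T_{i - n\vec a_d, \infty} = \Z^d$ as $\vec a_d > 0$.

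The existence of $\vec a$ amounts to checking that the orthogonal complement $\vec u^\perp \subset \R^d$ is not contained in $\{\vec v \in \R^d : \vec v_d = 0\}$, which holds unless $\vec u$ is parallel to $e_d$; by the $\gcd = 1$ convention on the direction, the exceptions are $\vec u = \pm e_d$. For $\vec u \neq \pm e_d$ I would construct $\vec a$ explicitly: if $\vec u_d = 0$ take $\vec a = e_d$, otherwise pick some $k < d$ with $\vec u_k \neq 0$ and set $\vec a = \pm(\vec u_k e_d - \vec u_d e_k)$ with the sign chosen to make $\vec a_d > 0$.

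The remaining case $\vec u = e_d$ (the case $\vec u = -e_d$ being excluded because the lemma genuinely fails there --- shifts of a finite slab cannot strictly increase) must be treated separately, since no $\vec a$ orthogonal to $e_d$ has $\vec a_d > 0$. But here $H = T_{r, \infty}$ for some $r$, so $H \cap T_{i, \infty} = T_{\max(r, i), \infty}$ is itself a shift of $H$, and the constant sequence at this single shift is a trivial increasing union equal to $H$. I expect the only real obstacle to be recognizing that this case split is needed; once $\vec a$ is produced, the main construction reduces to routine calculations via Lemma~\ref{lem:BoundaryTranslation} and the last-coordinate-only dependence of $T_{j, \infty}$.
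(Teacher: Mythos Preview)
Your proposal is correct and follows essentially the same approach as the paper: both treat the case $\vec u = e_d$ separately (where $H$ is already a $T_{r,\infty}$), and in the remaining case both produce a boundary vector with nonzero last coordinate and invoke Lemma~\ref{lem:BoundaryTranslation} to realize $H$ as an increasing union of the sets $H \cap T_{j,\infty}$. Your explicit construction of $\vec a$ is slightly more detailed than the paper's, which simply asserts that such vectors exist by the $\gcd = 1$ condition; and your phrasing in the $e_d$ case (``the constant sequence at this single shift'') should be read as the constant sequence at $H$ itself, since $H$ is a shift of $H \cap T_{i,\infty}$ --- but the content is correct.
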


\begin{proof}
Let $H = \bar H_{\vec v, r}$. If $H$ has direction $\vec v = e_d$ then $H = T_{r, \infty}$ 
and the claim is clear. Otherwise, by the assumption the direction of $H$ is neither $e_d$ nor $-e_d$, so by the $\gcd = 1$ requirement on the direction we can find $\vec u \in \partial H_{\vec v, 0}$ with an arbitrarily large or small $d$-coordinate.

By Lemma~\ref{lem:BoundaryTranslation} we have for $\vec u \in \partial H_{\vec v, 0}$ the equation $\vec u + H = H$, and thus we have $\vec u + (H \cap T_{i,\infty}) = H \cap T_{i + \vec u_d,\infty} \subset H$, in particular taking $\vec u \in \partial H_{\vec v, 0}$ with $\vec u_d$ tending to $-\infty$, we have
\[ \bigcup_{\vec u \in \partial H_{\vec v, 0}} \vec u + (H \cap T_{i,\infty}) = H. \qedhere \] 
\end{proof}

\section{An abstract construction theory}

\subsection{Rules and manuals}


We describe an abstract construction theory that describes conditions under which new subsets of $\Z^d$ can be built from known subsets. The basic object is the \emph{(construction) rule}, namely a pair $(C, D)$ where $\vec 0 \not\ni C \Subset D \subset G$. Such a rule states that we can add $\vec v$ to a known set, if this set contains all of $\vec v + C$, and is contained in $\vec v + D$. We call $C$ the \emph{support} and $D$ the \emph{guard} of the rule.
 
Next, we define some terminology for sets of rules. A set $\C$ of construction rules is called a \emph{manual}. A manual $\C$ is \emph{monotone up} if 
\[ D = \bigcup_i D_i \wedge (\forall i: (C, D_i) \in \C) \implies (C, D) \in \C. \]
We say it is \emph{down} if
\[ (C, D) \in \C, C \subset D' \subset D \implies (C, D') \in \C. \]
A manual is \emph{finite} if it is finite as a set. The \emph{size} of a manual is its cardinality as a set. A \emph{submanual} is a manual contained in it as a set.

The terms ``down'' and ``monotone up'' refer to analogous properties of set systems. Namely, a family of sets $\C \subset \pow(\Z^d)$ is usually called \emph{down} if $C \in \C$ and $D \subset C$ imply $D \in \C$. On the other hand, a family of sets $\C \subset \pow(\Z^d)$ is sometimes called a \emph{monotone class} in measure theory if it is closed under countable increasing unions and countable decreasing intersections. We drop ``class'' and add ``up'' to clarify that intersections are not allowed. 

\begin{remark}
The intuition behind rules is the following. To each set $E \subset \Z^d$ is associated some kind of object $O(E)$ that we want to build (in our main application, $O(E) = X|E$ for a subshift $X$). A rule $(C, D)$ states that if we know $O(E)$, and $\vec v + C \subset E \subset \vec v + D$, then this particular rule can be used to build $O(E \cup \{\vec v\})$. Although not directly visible in the formalism, in our applications the rule determines a certain ``way'' of performing the extension, and $(C, D)$ is saying that the extension from $E$ to $E \cup \{\vec v\}$ can be done in some uniform way for any set $E$ sandwiched between $\vec v + C$ and $\vec v + D$.
\end{remark}

To (partially) formalize the intuition, if $\C$ is a manual, we say there is a \emph{$\C$-construction step}, or just \emph{$\C$-step}, from $A \subset G$ to $B \subset G$ if $B = A \sqcup \{\vec v\}$ and for some rule $R = (C, D) \in \C$, we have $\vec v + C \subset A \subset \vec v + D$. We write this $A \nearrow_{R, g} B$ or just $A \nearrow_R B$ (since $B \setminus A = \{\vec v\}$ determines $\vec v$), or $A \nearrow_\C B$ if the choice of rule is irrelevant or clear. We also say that we \emph{apply} the rule $(C, D)$ \emph{at} $\vec v$ to $A$ to produce $B$, and that a rule \emph{applies} if it can be applied. We express $\vec v + C \subset A$ in words by saying \emph{the support is valid} and $A \subset \vec v + D$ by saying \emph{the guard is valid}.

A \emph{$\C$-blueprint} is a sequence $((R_1, \vec v_1), (R_2, \vec v_2), \ldots, (R_n, \vec v_n))$ with $R_i = (C_i, D_i) \in \C$ and $\vec v_i \in \Z^d$. We say it \emph{builds} $B$ from $A$ if 
\[ A \nearrow_{R_1,\vec v_1} A_1 \cdots A_{n-1} \nearrow_{R_n,\vec v_n} B. \]
Of course, $B = A \cup \{\vec v_1, \ldots, \vec v_n\}$, and we say the blueprint is \emph{valid} for $A$, if it builds that set $B$ from $A$ (i.e.\ the rules apply at each successive step). In the context of a blueprint, the sets $\vec v_i + C_i, \vec v_i + D_i$ are called \emph{positioned supports} and \emph{positioned guards} respectively.

In this case we also say $B$ is \emph{directly $\C$-buildable} from $A$ if $A \nearrow_\C^{*} B$. If $R_i = (C_i, D_i)$, then we say more specifically say that $B$ is directly $\C$-buildable from $A$ \emph{with support} $E$ if $(\bigcup_i \vec v_i + C_i) \cap A \subset E \subset A$. In other words, $E$ should contain all positioned supports. 
In these terms and later ones, we may drop $\C$ if it is clear from context.

\begin{definition}
We say $B \supset A$ is \emph{$\C$-buildable} from $A$, or $\C$ \emph{builds} $B$ from $A$, and we write $A \rightarrow_\C B$ if for every finite subset $B' \Subset B$, we have $A \nearrow_{\C}^* A \cup B''$ for some $B' \subset B'' \Subset B$. 
\end{definition}

In other words, the definition says that we can directly build arbitrarily large finite extensions of $A$ which stay inside $B$.

The reason our definition of building only requires building finite subsets is that it makes building transitive. If we worked instead with direct building of infinite sets, then we would need to introduce blueprints of infinite length, and discuss their allowed order types, which would be an unnecessary complication for present purposes. 

\begin{lemma}
\label{lem:BuildabilityTransitive}
Buildability is transitive. 
\end{lemma}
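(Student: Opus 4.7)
The plan is to show that if $A \rightarrow_\C B$ and $B \rightarrow_\C D$ (so in particular $A \subset B \subset D$), then $A \rightarrow_\C D$, by concatenating blueprints. The central observation is that guard conditions are \emph{preserved under passing to subsets}: if $E \subset v_i + D_i$ then any $E' \subset E$ also satisfies $E' \subset v_i + D_i$. So a blueprint that is valid starting from $B$ remains valid starting from any subset $A' \subset B$, provided only that we first ensure the finitely many positioned supports that lie in $B$ are actually present in $A'$.

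Concretely, I would start with an arbitrary $D' \Subset D$ and use $B \rightarrow_\C D$ to obtain a blueprint $\beta = ((R_1, \vec v_1), \ldots, (R_n, \vec v_n))$, with $R_i = (C_i, D_i)$, that builds some $B \cup D'''$ with $D' \subset D''' \Subset D$. Set $S = B \cap \bigcup_i (\vec v_i + C_i)$; this is a finite subset of $B$ since each $C_i$ is finite. Using $A \rightarrow_\C B$, pick a blueprint $\alpha$ that builds $A \cup B''$ from $A$ for some $S \subset B'' \Subset B$. Then I would form the concatenation $\alpha\beta$ applied starting from $A$ and check that it is valid.

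The verification is where one must be careful but the calculation is short. At step $i$ of the $\beta$ portion, the current set is $A \cup B'' \cup \{\vec v_1, \ldots, \vec v_{i-1}\}$. For the support, $\vec v_i + C_i \subset B \cup \{\vec v_1, \ldots, \vec v_{i-1}\}$ by validity of $\beta$, and since $(\vec v_i + C_i) \cap B \subset S \subset B''$ by construction, we get $\vec v_i + C_i \subset B'' \cup \{\vec v_1, \ldots, \vec v_{i-1}\} \subset A \cup B'' \cup \{\vec v_1, \ldots, \vec v_{i-1}\}$. For the guard, $A \cup B'' \cup \{\vec v_1, \ldots, \vec v_{i-1}\} \subset B \cup \{\vec v_1, \ldots, \vec v_{i-1}\} \subset \vec v_i + D_i$ using $A, B'' \subset B$ and the validity of $\beta$ from $B$. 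Thus $\alpha\beta$ builds $A \cup D''$ from $A$ where $D'' = B'' \cup D'''$, which is finite, contained in $D$, and contains $D'$, as required.

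The only ``obstacle'' worth flagging is notational: the support condition in rule $(C,D)$ is a containment of finite data, while the guard is potentially an infinite containment, and transitivity hinges exactly on the monotonic behavior of these two in opposite directions (supports grow, guards shrink). Once this is observed, the proof reduces to choosing $B''$ large enough to cover the finitely many supports of $\beta$ that lay in $B$ but not among the $\vec v_i$ themselves, and then passing from $B$ to $A \cup B''$ in the guard checks for free.
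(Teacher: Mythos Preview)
Your proof is correct and follows essentially the same approach as the paper: obtain a blueprint $\beta$ from $B \rightarrow_\C D$, collect the finitely many positioned supports of $\beta$ that lie in $B$ into a finite set, use $A \rightarrow_\C B$ to build past that set, and then run $\beta$ from there, checking that supports are still covered and guards still hold because the current set is sandwiched between the support data and $B \cup \{\vec v_1,\ldots,\vec v_{i-1}\}$. If anything, your version is slightly more careful than the paper's, which writes ``let $B' \Subset B$ be any finite subset containing all of the $\vec v_i + D_i$'' where it really means the intersection with $B$, exactly as you wrote $S = B \cap \bigcup_i(\vec v_i + C_i)$.
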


\begin{proof}
Suppose $A \rightarrow_\C B \rightarrow_\C C$ (in particular $A \subset B \subset C$). Let $C' \Subset C$ be arbitrary. Since $B \rightarrow_\C C$, we have $B \nearrow_\C^* B \cup C''$ where $C' \subset C'' \Subset C$. Thus there is a blueprint $((R_1, \vec v_1), \ldots, (R_k, \vec v_k)$ that builds $B \cup C''$ from $B$ (and we can take $C'' = \{\vec v_1, \ldots, \vec v_k\}$). If $R_i = (D_i, E_i)$, and let $B' \Subset B$ be any finite subset containing all of the $\vec v_i + D_i$. Then the same blueprint is valid for $F$ for all $B' \subset F \Subset B$: the guards are valid because the sets are smaller at each rule application, and supports are valid because $F$ contains $B'$.

Thus for all $B' \subset F \Subset B$, we have $F \nearrow_\C^* F \cup C''$. Since $A \rightarrow_\C B$, we have $A \nearrow_\C^* A \cup B''$ for some $B''$ satisfying $B' \subset B'' \Subset B$. Letting $F = A \cup B''$ we have $A \nearrow_\C^* A \cup B'' = F \nearrow_\C^* F \cup C'' = A \cup B'' \cup C''$, thus $A \nearrow_\C^* A \cup G$ where $G = B'' \cup C''$ is a finite set containing $C'$.
\end{proof}

We will sometimes abuse terminology and say $B$ is buildable from $A$ even if $B \not\supset A$, in which case we mean $A \cup B$ is buildable from $A$. However, we warn the reader that Lemma~\ref{lem:BuildabilityTransitive} above is not true with this convention.

Now we define a key notion which has no parallel in \cite{Sa24}.

\begin{definition}
A manual is \emph{permissive} if 
\[ (C, D), (C', D') \in \C \mbox{ and } \vec v + C' \subset D \mbox{ and } D \cup \{\vec 0\} \subset \vec v + D' \implies (C, D \cup \{\vec v\}) \in \C. \]
\end{definition}

In words, the assumption states that if we can apply $(C', D')$ at $\vec v$ before or after applying $(C, D)$ at $\vec 0$, then we can add $\vec v$ to the guard $D$. 

\begin{remark}
Let us attempt to explain informally how one should think about this property. Recall from the intuition given above for rules that we are building some objects $O(E)$ on subsets $E$ of $\Z^d$, and for any set $E$ sandwiched between $\vec v + C$ and $\vec v + D$, the extension of $O(E)$ to $O(E \cup \{\vec v\})$ is somehow built in a uniform way. If the assumptions of permissivity hold, we can build $O(D \cup \{\vec 0\} \cup \{\vec v\})$ by first applying $(C, D)$ at $\vec 0$ and then $(C', D')$ at $\vec v$ (which is possible since $\vec v + C \subset D \cup \{\vec 0\} \subset \vec v + D'$).

We now observe that since the latter extension happens the same way no matter how we extended $E$ to $\vec v$, actually we could have applied $(C', D')$ at $\vec v$ first (which is possible since $\vec v + C' \subset D \subset \vec v + D'$), i.e.\ we could just as well add $\vec v$ to the guard of $(C, D)$. This informal idea will readily translate to the proofs of permissivity of the avo- and unimanuals in Lemma~\ref{lem:AvomanualPermissive}.
\end{remark}





We say that a manual $\C$ \emph{handles} a family of extensions $\mathcal{E}$ if for all extensions $(E, E') \in \mathcal{E}$ it builds $E'$ from $E$. We say that a manual $\C$ \emph{$k$-handles} such $\mathcal{E}$ if 
there is a submanual $\C' \subset \C$ of size $k$ that handles $\mathcal{E}$. 
We say $\C$ \emph{finitely handles} a family of extensions if it $k$-handles them for some finite $k$. 
We say $\C$ \emph{builds} a family of sets $\mathcal{H}$ if it handles the family of extensions $\{(\emptyset, H) \;|\; H \in \mathcal{H}\}$ (i.e.\ builds each $H \in \mathcal{H}$ from $\emptyset$), and we define the terminology \emph{$k$-builds} and \emph{finitely builds} as with handling. Note that ``handling extensions'' is just alternate terminology for building sets from other sets. Thus for example, the following is immediate from Lemma~\ref{lem:BuildabilityTransitive}:

\begin{lemma}
\label{lem:HandlingTransitive}
If a manual handles the extensions $(E, E')$ and $(E', E'')$ then it handles the extension $(E, E'')$.
\end{lemma}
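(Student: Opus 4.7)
The plan is essentially a one-line unfolding of definitions followed by an invocation of the already-established transitivity of buildability. Recall that by definition, a manual $\C$ handles an extension $(F, F')$ precisely when $F \rightarrow_\C F'$, i.e.\ when $\C$ builds $F'$ from $F$. So the hypothesis ``$\C$ handles $(E, E')$ and $(E', E'')$'' is just the statement $E \rightarrow_\C E'$ and $E' \rightarrow_\C E''$, and the conclusion ``$\C$ handles $(E, E'')$'' is just $E \rightarrow_\C E''$.

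Thus the proof consists of first rewriting the hypothesis and conclusion in the $\rightarrow_\C$ notation, and then applying Lemma~\ref{lem:BuildabilityTransitive}, which states exactly that $A \rightarrow_\C B$ and $B \rightarrow_\C C$ imply $A \rightarrow_\C C$. No new combinatorial work is needed, and there is no real obstacle: the whole point of the preceding definition of buildability (via finite subsets rather than order-typed infinite blueprints) was precisely to make transitivity of $\rightarrow_\C$ hold, and Lemma~\ref{lem:BuildabilityTransitive} has already verified this.

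If one wanted to spell it out without citing the earlier lemma, one could instead argue directly: given any finite $E''_0 \Subset E''$, use $E' \rightarrow_\C E''$ to obtain a blueprint building $E' \cup E''_1$ from $E'$ for some $E''_0 \subset E''_1 \Subset E''$; let $E'_0 \Subset E'$ collect the finitely many positioned guards that appear; then use $E \rightarrow_\C E'$ to reach some $E \cup E'_1$ with $E'_0 \subset E'_1 \Subset E'$, and observe that the second blueprint still applies from $E \cup E'_1$ because its supports are already contained in $E'_0 \subset E'_1$ and its guards are only larger. Concatenating gives a direct construction of $E \cup E'_1 \cup E''_1 \supset E''_0$ from $E$. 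But since this argument is exactly the proof of Lemma~\ref{lem:BuildabilityTransitive}, I would simply cite it.
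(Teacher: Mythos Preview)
Your proposal is correct and matches the paper exactly: the paper simply states that the lemma is immediate from Lemma~\ref{lem:BuildabilityTransitive}, which is precisely what you do. (In your optional spelled-out aside you wrote ``positioned guards'' where you meant ``positioned supports'', but since you rightly opt to just cite the transitivity lemma this slip is moot.)
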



We say that $\C$ \emph{handles in parallel the extension $(E, E')$} 
if there is a rule $(C, D) \in \C$ such that for all $\vec v \in E' \setminus E$, we have $\vec v + C \subset E$ and $E' \setminus \{\vec v\} \subset \vec v + D$. We note that while ``handles in parallel an extension'' is clumsy English, we apply the verb to objects with a long definition and splitting the verb would make some of our statements hard to parse.

\begin{remark}
As an important terminological clarification, when we say a manual $\C$ finitely handles (or $k$-handles) all $\PP$-extensions, we mean that for every $\PP$-extension $(E, E')$, $\C$ finitely handles (or $k$-handles) $(E, E')$. When we want to say it handles the family of all $\PP$-extensions, we say just that. A similar remark holds for building all sets of some type versus building the family of all sets of that type, and also to parallel handling.
\end{remark}


\subsection{Manuals and geometry}

If $\C$ is a manual, then its \emph{trace of width $k$} with $k \in \N \cup \{\infty\}$, written $T_k\C$, is the set containing $(\pi(C \cap S_0), \pi(D \cap S_0))$ whenever $(C, D) \in \C$, $C \Subset T_k, T_{1,k} \subset D$. Of course, it is a set of rules, thus a manual itself. The trace of infinite width is also called the \emph{halfspace trace}. A ``trace'' of a manual can refer to any of these manuals.

We show that traces preserve many properties of a manual.

\begin{lemma}
\label{lem:TraceHandles}
Let $\D$ be a trace of the manual $\C$. Suppose $\C$ is down. Then 
\begin{itemize}
\item $\D$ is down;
\item if $\C$ is monotone up, so is $\D$;
\item if $\C$ is permissive, so is $\D$; and
\item if $\C$ handles singleton inductive interval extensions, so does $\D$.
\end{itemize}
\end{lemma}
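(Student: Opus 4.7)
The overall strategy for all four bullets is the same: lift each trace rule $(\bar C, \bar D) \in \D$ to a witnessing $\C$-rule $(C, D)$ satisfying the trace conditions ($C \Subset T_k$, $T_{1,k} \subset D$, $\pi(C \cap S_0) = \bar C$, $\pi(D \cap S_0) = \bar D$), then invoke the relevant property of $\C$, and finally project back down. A standing normalization, available from $\C$ being down, is to replace any lift's guard $D$ by the smaller $\hat D := \pi^{-1}(\bar D) \cup T_{1,k}$: this contains the support since $C \cap S_0 = \pi^{-1}(\bar C) \subset \pi^{-1}(\bar D)$ and $C \setminus S_0 \subset T_{1,k}$ (using $C \subset T_k$), and sits inside the original $D$. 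With this in hand, the down property of $\D$ is immediate: given $\bar C \subset \bar D' \subset \bar D$, apply down of $\C$ with the intermediate guard $\pi^{-1}(\bar D') \cup T_{1,k}$, which is sandwiched between $C$ and $D$ by the same reasoning, and read off the trace.

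For permissivity: given $(\bar C, \bar D), (\bar C', \bar D') \in \D$ satisfying the hypotheses, I take normalized lifts $(C, \hat D), (C', \hat D') \in \C$ and lift $\vec v \in \Z^{d-1}$ to $\tilde v := (\vec v, 0) \in \Z^d$. The two hypotheses of $\C$'s permissivity, $\tilde v + C' \subset \hat D$ and $\hat D \cup \{\tilde v\} \subset \tilde v + \hat D'$, I check slicewise via Lemma~\ref{lem:SetInclusionCases}: on $S_0$ they are the $\pi^{-1}$-images of the assumed $\Z^{d-1}$-containments, while on $T_{1,k}$ they hold automatically since $\tilde v + T_{1,k} = T_{1,k}$ and this set lies in both $\hat D$ and $\tilde v + \hat D'$. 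Permissivity of $\C$ then produces $(C, \hat D \cup \{\tilde v\}) \in \C$, whose trace is $(\bar C, \bar D \cup \{\vec v\})$.

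The monotone up property is what I expect to be the main obstacle. Given $(\bar C, \bar D_i) \in \D$ with $\bar D = \bigcup_i \bar D_i$, I would lift and normalize to rules $(C^{(i)}, \hat D^{(i)}) \in \C$ with $\hat D^{(i)} = \pi^{-1}(\bar D_i) \cup T_{1,k}$. To invoke monotone up of $\C$ one needs a common support, but the normalized $C^{(i)}$ agree only on $S_0$ and may differ in their $T_{1,k}$-parts $E^{(i)} := C^{(i)} \cap T_{1,k}$. My plan is to argue that one can always arrange a lift with the canonical support $\pi^{-1}(\bar C) \subset S_0$ — the intuition being that each $E^{(i)}$ is already contained in every guard $\hat D^{(j)}$, so it ought to be removable from the support — and then apply monotone up of $\C$ to obtain $(\pi^{-1}(\bar C), \pi^{-1}(\bar D) \cup T_{1,k}) \in \C$, whose trace is $(\bar C, \bar D)$. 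Establishing this support-reduction rigorously from just the down and monotone-up axioms is the delicate part.

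For handling singleton inductive interval extensions: given such $(\bar E, \bar E')$ in $\Z^{d-1}$, by Lemma~\ref{lem:ExtensionStuff} I may assume after shifting that $\bar E' \setminus \bar E = \{\vec 0^{d-1}\}$, that $\bar E$ is inductive and that one of $\pm e_1 + (\bar E \cup \{\vec 0\})$ is inductive. I lift to $E := \pi^{-1}(\bar E) \cup T_{1,k}$ in $\Z^d$ (with $T_{1,\infty}$ when $k = \infty$) and check via the interval-list description that its interval list is that of $\bar E$ augmented by the zero-grazing interval $[1,k]$ (resp.\ $\Z_{\geq 1}$), so $E$ is inductive, and analogously that $E \cup \{\vec 0^d\}$ is inductive up to a shift by $\pm e_1^d$, inherited from the corresponding shift property of $\bar E \cup \{\vec 0\}$. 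Since $\C$ by hypothesis handles $(E, E \cup \{\vec 0^d\})$ and the difference is a single point, the blueprint reduces to one $\C$-rule $(C, D)$ applied at $\vec 0^d$ with $C \subset E \subset D$; then $C \Subset T_k$ (finite, contained in $E \subset T_k$) and $T_{1,k} \subset E \subset D$, so $(\pi(C \cap S_0), \pi(D \cap S_0)) \in \D$, and this trace rule applied at $\vec 0^{d-1}$ handles $(\bar E, \bar E')$.
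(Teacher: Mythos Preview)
Your strategy—lift to $\C$, normalize the guard via down to $\pi^{-1}(\bar D)\cup T_{1,k}$, apply the corresponding property of $\C$, then project—is exactly the paper's, and for the down, permissive, and inductive-interval-extension bullets your arguments are correct and essentially identical to the paper's (your treatment of the last bullet is in fact more carefully justified than the paper's ``it is clear''). One small slip in the permissivity paragraph: the second containment to verify is $\hat D\cup\{\vec 0^d\}\subset \tilde v+\hat D'$, not $\hat D\cup\{\tilde v\}$; with this correction your slicewise check goes through.

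On monotone up you have correctly located a genuine obstacle, and the paper's proof does not overcome it either: it writes the lifts as ``$(\hat C,\hat D_i)\in\C$'' with a single unindexed $\hat C$, tacitly assuming a common lifted support exists, and offers no argument for this. Your proposed fix of reducing each lift's support to the canonical $\pi^{-1}(\bar C)\subset S_0$ cannot be derived from the down and monotone-up axioms, since neither permits modifying a rule's support. In fact the bullet is false at this level of generality: in $\Z^2$ with $k=1$, set $C_n=\{(1,0),(n,1)\}$ and $D_n=(\{1,\dots,n\}\times\{0\})\cup S_1$, and let $\C$ be the down-closure of $\{(C_n,D_n):n\geq 1\}$; then $\C$ is down and monotone up, but $T_1\C=\{(\{1\},E):1\in E\Subset\Z_{\geq 1}\}$ is not monotone up. So this is a gap shared by your proposal and the paper's proof; the statement does hold for the avomanual and unimanual (the only cases the paper actually uses) via a direct follower-set argument, but not from the stated hypotheses alone.
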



\begin{proof}
\textbf{Down.} We consider the $k$-trace (possibly $k = \infty$). Suppose $\C$ is down. If $(C, D) \in \D$, then $(C, D) = (\pi(\hat C \cap S_0), \pi(\hat D \cap S_0))$ where $(\hat C, \hat D) \in \C$, $\hat C \Subset T_k$, and $T_{1,k} \subset \hat D$. If $C \subset E \subset D$, then $\hat C \subset E \cup T_{1,k} \subset \hat D$, so $(\hat C, E \cup T_{1,k}) \in \C$ (by the down property). This implies 
\[ (\pi(\hat C \cap S_0), \pi((E \cup T_{1,k}) \cap S_0)) = (C, E) \in \D. \] 

\textbf{Monotonicity up.} Suppose $\C$ is monotone up (and is also down). Suppose $(C, D_i) \in \D$ where $D_i$ are an increasing sequence of sets. We have $(C_i, D_i) = (\pi(\hat C_i \cap S_0), \pi(\hat D_i \cap S_0))$ where $(\hat C, \hat D_i) \in \C$, $\hat C \Subset T_k$, and $T_{1,k} \subset \hat D$. Since $\C$ is down, we may assume $\hat D_i = \pi^{-1}(D) \sqcup T_{1,k}$ for all $i$. Then $\hat D_i$ is increasing, so $(\hat C, \hat D) \in \C$ for $\hat D = \bigcup_i \hat D_i$. Then 
\[ (\pi(\hat C \cap S_0), \pi(\hat D \cap S_0)) = (C, \bigcup_i D_i) = (C, D) \in \D. \]

 
\textbf{Permissivity.} Suppose $\C$ is permissive (and down). Suppose
\[ (C, D), (C', D') \in \D \mbox{ and } \vec u + C' \subset D \mbox{ and } D \cup \{\vec 0\} \subset \vec u + D' \]
for some $\vec u \in \Z^{d-1}$. We need to show $(C, D \cup \{\vec u\}) \in \D$. Let $\vec v = \pi^{-1} \vec u$. By the definition of $\D$, $(C, D) = (\pi(\hat C \cap S_0), \pi(\hat D \cap S_0))$ where $(\hat C, \hat D) \in \C$, $\hat C \Subset T_k$ and $T_{1,k} \subset \hat D$, and similarly $(C', D') = (\pi(\hat C' \cap S_0), \pi(\hat D' \cap S_0))$ where $(\hat C', \hat D') \in \C$, $\hat C' \Subset T_k$ and $T_{1,k} \subset \hat D'$.

Since $\C$ is down, we may assume $\hat D = \pi^{-1}(D) \sqcup T_{1,k}$ and $\hat D' = \pi^{-1}(D') \sqcup T_{1,k}$. Then $\vec u + C' \subset D$ and $D \cup \{\vec 0\} \subset \vec u + D'$ imply $\vec v + \hat C' \subset \hat D$ and $\hat D \cup \{\vec 0\} \subset \vec v + \hat D'$. From permissivity of $\C$ we obtain $(\hat C, \hat D \cup \{\vec v\}) \in \C$. 
Then
\[ (\pi(\hat C \cap S_0), \pi((\hat D \cup \{\vec v\}) \cap S_0)) = (C, D \cup \{\vec u\}) \in \D. \] 

\textbf{Handling singleton inductive interval extensions.} Suppose $\C$ handles singleton inductive interval extensions (and is down). Let $(E, E')$ be a singleton inductive interval extension in dimension $d-1$. By Lemma~\ref{lem:ExtensionStuff}, after a shift we may suppose that $E' \setminus E = \{\vec 0\}$, where $E$ is an inductive interval, and $E' + e_1$ is an inductive interval (the case $E' - e_1$ being symmetric). Set $\bar E = \pi^{-1}(E) \cup T_{1,k}, \bar E' = \pi^{-1}(E') \cup T_{1,k}$. It is clear that both are inductive intervals, so $(\bar E, \bar E')$ is a singleton inductive interval extension in dimension $d$. Thus $\C$ handles it, i.e.\ there is a rule $(\bar C, \bar D) \in \C$ such that $\bar C \Subset \bar E \subset \bar D$. In particular $\bar C \subset T_k$ and $T_{1,k} \subset \bar D$, so letting $C = \pi(\bar C \cap S_0), D = \pi(\bar D \cap S_0)$ we have $(C, D) \in \D$. This rule handles the extension $(E, E')$, because $C \subset E \subset D$ follows from $\bar C \Subset \bar E \subset \bar D$ by intersecting with $S_0$ and applying $\pi$.
\end{proof}

\begin{lemma}
\label{lem:BuildClosure}
For any manual, the set of buildable sets is closed under shifts and increasing unions.
\end{lemma}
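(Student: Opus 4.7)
The plan is to interpret ``buildable'' as ``buildable from $\emptyset$'' in the sense of the definition just before the lemma (so $\mathcal H = \{B \mid \emptyset \rightarrow_\C B\}$), and verify the two closure properties directly from the definition of $\rightarrow_\C$.

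For closure under shifts, the key observation is that rule application is shift-equivariant: a rule $(C, D)$ applies at $\vec u$ to a set $A$ (i.e.\ $\vec u + C \subset A \subset \vec u + D$) if and only if it applies at $\vec v + \vec u$ to the shifted set $\vec v + A$. Consequently, if a blueprint $((R_1, \vec v_1), \ldots, (R_n, \vec v_n))$ is valid for $A$ and builds $B$, then the shifted blueprint $((R_1, \vec v + \vec v_1), \ldots, (R_n, \vec v + \vec v_n))$ is valid for $\vec v + A$ and builds $\vec v + B$. Given this, suppose $\emptyset \rightarrow_\C B$ and let $B' \Subset \vec v + B$. Then $-\vec v + B' \Subset B$, so by buildability there is some $-\vec v + B' \subset B'' \Subset B$ with $\emptyset \nearrow_\C^* B''$, witnessed by some blueprint. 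Shifting the blueprint by $\vec v$ yields $\emptyset \nearrow_\C^* \vec v + B''$, and $B' \subset \vec v + B'' \Subset \vec v + B$, which is exactly the condition needed for $\emptyset \rightarrow_\C \vec v + B$.

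For closure under increasing unions, let $(B_i)_i$ be an increasing sequence of sets each buildable from $\emptyset$, and set $B = \bigcup_i B_i$. The definition of $\rightarrow_\C$ is fundamentally finitary: to build $B$ from $\emptyset$ it suffices to produce, for each $B' \Subset B$, some finite $B''$ with $B' \subset B'' \Subset B$ and $\emptyset \nearrow_\C^* B''$. Since $B'$ is finite and $(B_i)$ is increasing, $B'$ is already contained in some $B_j$. By $\emptyset \rightarrow_\C B_j$, there exists $B''$ with $B' \subset B'' \Subset B_j$ and $\emptyset \nearrow_\C^* B''$; and since $B_j \subset B$, also $B'' \Subset B$, as required.

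Neither step presents a real obstacle: the only content in the first part is the shift-equivariance of the conditions $\vec u + C \subset A \subset \vec u + D$, and the only content in the second part is that the definition of $\rightarrow_\C$ is quantified over finite subsets of the target, which is precisely what makes compatibility with increasing unions automatic. The mild point to be careful about is to distinguish direct buildability $\nearrow_\C^*$ (which would not be closed under increasing unions, since blueprints have finite length) from buildability $\rightarrow_\C$; the lemma is stated for the latter, and that is what makes the argument go through.
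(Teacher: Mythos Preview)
Your proof is correct and follows essentially the same approach as the paper: for shifts, translate the finite subset back, build it with a blueprint, and shift the blueprint by $\vec v$; for increasing unions, observe that any finite subset of the union already lies in some $B_j$ and invoke buildability of $B_j$. Your explicit remark that the shift part relies on shift-equivariance of the rule-application condition, and your distinction between $\nearrow_\C^*$ and $\rightarrow_\C$, are exactly the points the paper's proof implicitly uses.
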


\begin{proof}
Let $C$ be buildable. Then $\vec v + C$ is buildable, because if $D \Subset \vec v + C$ is arbitrary, then some finite set $E$ between $D - \vec v \Subset C$ and $C$ is built by some blueprint $((R_1, \vec v^1), (R_2, \vec v^2), \ldots, (R_k, \vec v^k))$. It is easy to see that then $((R_1, \vec v^1 + \vec v), (R_2, \vec v^2 + \vec v), \ldots, (R_k, \vec v^k+ \vec v))$ builds $\vec v + E$, which is between $D$ and $\vec v + C$.

Let $C = \bigcup_i C_i$ where $C_i \subset C_{i+1}$. If $D \Subset C$ is finite, then $D \subset C_i$ for some $i$. Then there is a finite set $E$ between $D$ and $C_i$ which is built by the manual. But $E$ is also a set between $D$ and $C$, so indeed we can build arbitrarly large finite subsets of $C$.
\end{proof}

Let $\C$ be a manual. Let $N \Subset \Z^d$. The $N$-blocking of $\C$ is the manual $\C'$ containing $(C', D')$ precisely when $C' \Subset D' \subset \Z^d \setminus \{\vec 0\}$ and $N$ is directly $\C$-buildable from $D' + N$ with support $C' + N$. 
A \emph{convex blocking} is an $N$-blocking with (finite) convex $N$.

\begin{lemma}
\label{lem:ConvexManuals}
If $\C$ handles all singleton convex extensions, then so do all its convex blockings.
\end{lemma}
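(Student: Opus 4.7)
My plan is to take $D' = E$ and find a finite $C' \Subset E$ such that $(C', E) \in \C'$; once such a $C'$ is in hand, the rule $(C', E)$ applied at $\vec 0$ handles the blocked extension $(E, E \cup \{\vec 0\})$, because $\vec 0 \notin E$, $C' \subset E$, and $E \subset E = \vec 0 + E$. By the definition of $N$-blocking, $(C', E) \in \C'$ unpacks to: there is a $\C$-blueprint in $\Z^d$ that builds $N$ on top of $E + N$ (in the abused sense, so the true target is $(E + N) \cup N$) with all positioned supports covered by $C' + N$. The finite choice of $C'$ at the end is routine, because the blueprint uses finitely many rules with finite supports, and every point of $E + N$ is of the form $\vec e + \vec n$ for some $\vec e \in E$, so the bulk of the work is constructing the blueprint.

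For the construction I would work inside the finite convex region $\hat F = \CHull((E \cup \{\vec 0\}) + N) \cap \Z^d$, which contains $(E + N) \cup N$. I would enumerate the cells of $\hat F \setminus (E + N)$ (a finite set, since $N$ is finite) in an order compatible with the convex geometry — for example, by a sweep along a vector $\vec u$ separating $\vec 0$ from $E$, satisfying $\vec u \cdot \vec 0 < \vec u \cdot \vec e$ for all $\vec e \in E$, which exists because $\vec 0$ lies on the boundary of $\CHull(E \cup \{\vec 0\})$. At each step, for the cell $\vec v$ being added, I would pick a convex set $F$ with $F \supset$ (current working set) and $F \cup \{\vec v\}$ also convex; by hypothesis $\C$ handles the singleton convex extension $(F, F \cup \{\vec v\})$, providing a rule $(C, D) \in \C$ with $\vec v + C \subset F \subset \vec v + D$. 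The guard condition on the current set is automatic from $A \subset F \subset \vec v + D$, and the sweep ordering is chosen so the rule's support $\vec v + C$ lies inside cells already built.

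The main obstacle I anticipate is exactly this last point: the Minkowski sum $E + N$ of two discretely convex sets need not be discretely convex (simple two-point examples in $\Z^2$ show this), so we cannot take $A$ itself as $F$ at the first step of the blueprint. This forces us to also include the ``hull-completion'' cells of $\CHull(E + N) \cap \Z^d \setminus (E + N)$ in our enumeration, and to orchestrate the ordering of all additions — originals and hull-completions — so that at each step the rule's support lands inside the already-built portion. The finiteness of $\hat F$ and a careful choice of sweep direction should make this orchestration possible, but this coordination is where most of the proof's care is needed.
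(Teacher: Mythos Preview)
Your overall strategy matches the paper's: take $D' = E$ and witness $(C',E)\in\C'$ by exhibiting a $\C$-blueprint that directly builds $(E+N)\cup N$ from $E+N$, then choose $C'\Subset E$ large enough to cover the positioned supports. You are also right to worry about the discrete Minkowski sum: the paper simply asserts that $D'+N$ and $(D'\cup\{\vec 0\})+N$ are convex and then invokes the convex-stepping lemma, whereas you correctly observe that sums of discretely convex sets need not be discretely convex (e.g.\ $\{(0,0),(1,1)\}+\{(0,0),(1,-1)\}$ misses $(1,0)$).

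However, your proposed repair has two genuine gaps. First, by enumerating all cells of $\hat F=\CHull((E\cup\{\vec 0\})+N)\cap\Z^d$, your blueprint terminates at $\hat F$, not at $(E+N)\cup N$. When $\hat F\supsetneq (E+N)\cup N$ --- precisely the situation you flag --- this does \emph{not} witness a rule with guard $D'=E$ in the blocked manual: the definition of the $N$-blocking requires the blueprint to end exactly at $(D'+N)\cup N$. You cannot simply truncate the blueprint, since (as your own concern shows) the hull-completion cells may need to be added before any $N$-cell can be reached by a singleton convex extension. Second, your support argument is circular. The rule furnished by the convex extension $(F,F\cup\{\vec v\})$ only guarantees its support lies in $F$, and you have chosen $F\supsetneq A$ precisely because $A$ is not convex; a sweep ordering on the cells being \emph{added} gives you no control over where inside $F$ that rule's (a priori arbitrary) finite support sits, so ``the sweep ordering is chosen so the rule's support lies inside cells already built'' is not justified.

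The paper's argument avoids both problems \emph{provided} its convexity assertion holds, since then one takes $F=D_i=A$ at every step and both support and guard are automatic. If you want to honour your objection about discrete Minkowski sums, the natural move is not to build extra cells but to abandon $D'=E$: choose a larger $D'\supset E$ with $\vec 0\notin D'$ for which $D'+N$ and $(D'\cup\{\vec 0\})+N$ really are convex, and then run the paper's stepping argument --- though you must then separately verify that the positioned supports can be covered by $C'+N$ with $C'\subset E$, which is no longer free.
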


\begin{proof}
Let $\C$ be a manual handling all singleton convex extensions, and let $\C'$ be its $N$-blocking for convex $N$. Suppose that $H$ and $H \cup \{\vec 0\}$ are convex. We need to show that $\vec 0$ is buildable from $H$ in $\C'$, or equivalently that there is a rule $R' = (C', D') \in \C'$ such that $H \nearrow_{R'} H \cup \{\vec 0\}$.

Thus we need to find $C' \Subset D' \subset \Z^d \setminus \{\vec 0\}$ such that $N$ is directly $\C$-buildable from $D' + N$ with support $C' + N$. This is equivalent to the fact that $(D' + N) \cup N = (D' \cup \{\vec 0\}) + N$ is buildable from $D' + N$, as we can take the support $C' \Subset D'$ to be any finite set such that $C' + N$ contains the union of the positioned supports of the building blueprint.

To see that $(D' \cup \{\vec 0\}) + N$ is buildable from $D' + N$, we observe that since $D' + N$ is a convex subset of the convex set $(D' \cup \{\vec 0\}) + N$ with finite set difference, there is a sequence of convex sets $D' + N = D_0, D_1, D_2, \ldots, D_k = (D' \cup \{\vec 0\}) + N$ such that $D_{i+1} = D_i \cup \{\vec u_i\}$ for some vectors $\vec u_i$. This is a standard fact about convex sets, see e.g.\ \cite[Lemma~3.10]{Sa22d}.

Now, since $\C$ handles singleton convex extensions, it handles all the extensions $(D_i, D_{i+1})$, and thus by Lemma~\ref{lem:HandlingTransitive} it handles $(D_0, D_k) = (D' + N, (D' \cup \{\vec 0\}) + N)$.
\end{proof}

\subsection{A theorem about manuals}

In this section, we prove our main result about manuals, from which all the subshift-specific results of this paper follow with relative ease. It is the following:

\begin{theorem}
\label{thm:MainTechnical}
If a manual is down, monotone up and permissive and handles all singleton inductive interval extensions, then it handles in parallel the family of all sufficiently slanted minimal rational halfspace extensions.
\end{theorem}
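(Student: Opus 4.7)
The plan is to induct on the dimension $d$. In the base case $d = 1$, Lemma~\ref{lem:ExtensionStuff} identifies (after translation) the minimal rational halfspace extensions with singleton inductive interval extensions, which are handled by hypothesis; since $E' \setminus E$ is a single point here, the handling is automatically in parallel.

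For the inductive step, fix a sufficiently slanted $\vec v \in \Z^d$. By Lemma~\ref{lem:BoundaryTranslation} both $H_{\vec v, 0}^\circ$ and $\bar H_{\vec v, 0}$ are invariant under translations by $\partial H_{\vec v, 0}$, so handling in parallel reduces to producing a single rule $(\hat C, \hat D) \in \C$ with $\hat C \subset H_{\vec v, 0}^\circ$ and $\hat D \supset \bar H_{\vec v, 0} \setminus \{\vec 0\}$. By Lemma~\ref{lem:TraceHandles} the halfspace trace $T_\infty \C$ satisfies the hypotheses in dimension $d-1$, and by Lemma~\ref{lem:SlantednessOfSlice} the vector $\vec v'$ obtained from $\vec v$ by dropping the last coordinate is sufficiently slanted in $\Z^{d-1}$. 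The induction hypothesis yields a trace rule handling in parallel $(H_{\vec v', 0}^\circ, \bar H_{\vec v', 0})$ in $\Z^{d-1}$, which lifts (trimming by down) to a base rule $(\hat C, \hat D_0) \in \C$ with $\hat C \Subset T_{0,\infty}$ and $\hat D_0 = ((\bar H_{\vec v, 0} \cap S_0) \setminus \{\vec 0\}) \cup T_{1,\infty}$; sufficient slantedness of $\vec v$ relative to the finite set $\hat C$ then forces $\hat C \subset H_{\vec v, 0}^\circ$ via Lemma~\ref{lem:Inside}.

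The remaining task is to enlarge $\hat D_0$ downward to cover the portion of $\bar H_{\vec v, 0}$ on slices $S_{-1}, S_{-2}, \ldots$. Inductively I would construct rules $(\hat C, \hat D_n) \in \C$ with $\hat D_n \supset (\bar H_{\vec v, 0} \cap T_{-n, \infty}) \setminus \{\vec 0\}$ by adding one slice at a time, using permissivity with helper $(\hat C, \hat D_0)$: for $\vec w \in \bar H_{\vec v, 0} \cap S_{-n-1}$, a direct computation using Lemma~\ref{lem:BoundaryTranslation} gives $\vec w + \hat D_0 = (\bar H_{\vec v, \vec w \cdot \vec v} \cap S_{-n-1} \setminus \{\vec w\}) \cup T_{-n, \infty}$, so the second permissivity condition is satisfied as soon as the previously added points on the slice all have depth $\vec w' \cdot \vec v \geq \vec w \cdot \vec v$. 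Taking $\hat D = \bigcup_n \hat D_n$ via monotone up (and shrinking by down) then produces the desired rule.

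The main obstacle is the first permissivity condition $\vec w + \hat C \subset \hat D_{\text{current}}$: the slice-$S_{-n-1}$ part of $\vec w + \hat C$ consists of points strictly deeper in the halfspace than $\vec w$ (since $\hat C \cap S_0$ projects into $H_{\vec v', 0}^\circ$), all of which must already appear in the guard. This creates a dependency chain through infinitely many depth levels (bounded below by $0$ but unbounded above), so no na\"ive well-ordering of the slice will do. I would resolve this by layering permissivity with monotone up across depth: at a fixed depth the points are mutually independent (their dependencies lie at strictly larger depth), so once all higher-depth layers are present the current layer can be added one point at a time via permissivity, with finite truncations plus monotone up handling each infinite layer. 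Organizing these nested monotone-up limits so that a single support $\hat C$ persists throughout, and merging the various slantedness requirements picked up along the way (from Lemma~\ref{lem:Inside} applied to finite sets and from Lemma~\ref{lem:SlantednessOfSlice}) into one uniform threshold on $\vec v$, constitutes the central technical work.
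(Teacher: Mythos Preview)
Your inductive skeleton matches the paper's proof exactly: induct on $d$, pass to the halfspace trace $T_\infty\C$, lift the rule $(C',D')$ handling the $(d-1)$-dimensional extension to $(\hat C,\hat D_0)\in\C$ with $\hat D_0=((\bar H_{\vec v,0}\cap S_0)\setminus\{\vec 0\})\cup T_{1,\infty}$, observe that the only missing part of the guard is $\bar H_{\vec v,0}\cap T_{-\infty,-1}$, and try to fill it in slice by slice using permissivity and monotone up. You have also correctly isolated the real difficulty, namely the first permissivity hypothesis $\vec w+\hat C\subset \hat D_{\text{current}}$ on the new slice.

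The gap is in your proposed resolution. With the single helper $(\hat C,\hat D_0)$, the in-slice part of the support is $\vec w+(\hat C\cap S_0)$, a nonempty finite set of points strictly deeper than $\vec w$ on the \emph{same} slice $S_{-n-1}$. Since $\hat D_n\cap S_{-n-1}=\emptyset$, you cannot add \emph{any} first point: permissivity requires the translated support to already lie in the current guard, and monotone up only lets you pass to a union of guards each of which is already known to be in $\C$. Your ``layering across depth'' never acquires a base case, because depths on the slice are unbounded above; no nesting of monotone-up limits manufactures a rule with the needed support inclusion out of nothing. This is not a matter of organization---with only $(\hat C,\hat D_0)$ as helper, the permissivity hypothesis is simply never satisfied on a fresh slice.

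The paper closes this gap by invoking Lemma~\ref{lem:BuildHalfspaces} on $T_\infty\C$: this produces a \emph{finite} family of helper rules $\hat R_1,\ldots,\hat R_k\in\C$ (with supports in $T_\infty$ and guards containing $T_{1,\infty}$) whose $(d-1)$-dimensional projections \emph{build} every sufficiently slanted halfspace from the empty set. For each new slice $S_{-i-1}$ one takes a finite blueprint that builds an arbitrarily large piece of the projected halfspace $H_{\vec w',(i+1)\vec w_d}$; lifting it gives an ordering $\vec v_1,\ldots,\vec v_m$ of points on $S_{-i-1}$ in which the in-slice part of each positioned support lies among the \emph{previously listed} $\vec v_\ell$. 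Now permissivity applies $m$ times in succession (the out-of-slice supports land in $\bar H_{\vec w,0}\cap T_{-i,\infty}\subset D_i$ by Lemma~\ref{lem:Inside}), and monotone up then absorbs the whole slice. The essential new ingredient you are missing is precisely this: rules that build the lower-dimensional halfspace from $\emptyset$, so that the in-slice dependencies become well-founded.
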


Let us unravel the statement a bit, to clarify some fine points. The requirement of handling all singleton inductive interval extensions means that whenever $E, E'$ are shifts of inductive intervals, and $|E' \setminus E| = 1$, then some rule can be used to build $E'$ from $E$. By Lemma~\ref{lem:ExtensionStuff}, this means precisely that to all inductive intervals $E$, we can add $\vec 0$ by using some rule. Handling in parallel the family of all sufficiently slanted minimal rational halfspace extensions means in turn (again by Lemma~\ref{lem:ExtensionStuff}) that there is a single rule $(C, D)$ and a slant vector $\vec v \in \R^{d-1}$ such that for all $\vec v$-slanted positive integer vectors $\vec w \in \Z^d$, $(C, D)$ handles in parallel the extension $(H_{\vec w, 0}^\circ, \bar H_{\vec w, 0})$. 

Before we prove this, we need a technical lemma.

\begin{lemma}
\label{lem:BuildHalfspaces}
If a manual is down and handles all singleton inductive interval extensions, then it finitely builds the family of all sufficiently slanted rational halfspaces.
\end{lemma}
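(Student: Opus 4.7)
My plan is to proceed by induction on the dimension $d$, using Lemma~\ref{lem:TraceHandles} to pass to lower-dimensional traces of the manual.

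In the base case $d = 1$, the only sufficiently slanted halfspace is $[0, \infty)$. I would build it from $\emptyset$ by combining the rule $(\emptyset, D_0) \in \C$ that handles the singleton inductive interval extension $(\emptyset, \{0\})$ with the rule $(C_\infty, D_\infty) \in \C$ that handles the ``limit'' extension $((-\infty, -1], (-\infty, 0])$; the latter applies at $\vec v = n$ to $[0, n-1]$ whenever $n \geq \max(-C_\infty)$, since then $n + C_\infty \subset [0, n-1]$ and $[0, n-1] \subset n + D_\infty$. Finitely many small-$n$ cases are patched with individual rules, yielding a finite submanual building $[0, \infty)$.

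In the inductive step $d \geq 2$, by Lemma~\ref{lem:TraceHandles} each trace $T_k \C$ of $\C$ is a down manual in dimension $d - 1$ handling singleton inductive interval extensions, so by induction each finitely builds sufficiently slanted halfspaces in $\Z^{d-1}$. From $T_\infty \C$ I would pick a finite witness $\bar\C_\infty$ and lift each rule $(\bar C, \bar D)$ to a preimage $(C, D) \in \C$ with $C \Subset T_\infty$, $T_{1, \infty} \subset D$, and the prescribed projections, obtaining $\C^\uparrow_\infty \Subset \C$; letting $K$ be the maximum height of any support in $\C^\uparrow_\infty$, I would analogously lift finite witnesses from each $T_k \C$ for $k < K$, obtaining $\C^\uparrow_k \Subset \C$, and take $\C' = \C^\uparrow_\infty \cup \bigcup_{k < K} \C^\uparrow_k$.

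Given a sufficiently slanted $H \subset \Z^d$, Lemma~\ref{lem:HalfSpaceThing} and Lemma~\ref{lem:BuildClosure} reduce the goal to building $H \cap T_{0, \infty}$, which I would achieve by producing finite approximations (containing a prescribed $F \Subset H \cap T_{0, \infty}$ of maximum height $m$) row by row from row $m$ down to row $0$. Each slice $H \cap S_r$ is a shift of a sufficiently slanted halfspace in $\Z^{d-1}$ by Lemma~\ref{lem:SlantednessOfSlice} and can be built via the dim-$(d-1)$ blueprint through the lifts: the top $K$ rows use $\C^\uparrow_{m-r}$ (whose guards contain $T_{1, m-r}$ and accommodate the $m - r$ rows already built above), while the deeper rows $r \leq m - K$ use $\C^\uparrow_\infty$ (whose guards contain $T_{1, \infty}$, accommodating arbitrary upper content, and whose supports of height $\leq K$ fit within the $K$ upper rows available). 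The main technical hurdle will be to verify that each lifted support $\vec v + C$ lies inside $H$: the row-$r$ part mirrors the dim-$(d-1)$ blueprint and so fits in $H \cap S_r$ by induction, while the higher-row parts lie in $H$ because sufficient slantedness of $\vec u$ forces the positive term $\vec u_d c_d$ (for $c_d > 0$) to dominate the bounded contribution $\vec u_{[1, d-1]} \cdot c_{[1, d-1]}$, so the halfspace inequality holds automatically for support points strictly above row $r$.
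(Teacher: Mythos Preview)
Your proposal is correct and follows essentially the same approach as the paper: induction on dimension via Lemma~\ref{lem:TraceHandles}, lifting a finite witness from $T_\infty\C$ (bounding support height by some $K$) together with finite witnesses from $T_k\C$ for $k<K$, then building $H\cap T_m$ slice by slice from the top, verifying supports in higher rows by slantedness (the paper invokes Lemma~\ref{lem:Inside} for this) and in the current row by the lifted $(d{-}1)$-dimensional blueprint, and finally passing to $H$ via Lemma~\ref{lem:HalfSpaceThing} and Lemma~\ref{lem:BuildClosure}. The only real difference is cosmetic: in the base case $d=1$ the paper uses the extensions $([1,t],[0,t])$ and $((0,\infty),[0,\infty))$ to build $[0,m]$ from right to left, whereas you use $([-n,-1],[-n,0])$ and $((-\infty,-1],(-\infty,0])$ to build left to right---both are equally valid inductive interval extensions and the direction is irrelevant to the induction.
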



We note that inductive intervals are in particular convex sets, so it suffices that the manual handles all singleton convex extensions.

\begin{proof}
We prove this by induction on dimension. It suffices to consider halfspaces $H = \bar H_{\vec w, 0}$, since we can translate any halfspace to this form. Let $\C$ be our manual. 
 
\textbf{Dimension one.} Suppose first that $d = 1$.\footnote{We could alternatively give appropriate definitions for everything in the case $d = 0$, so that the one-dimensional case would also be covered by the induction step (and the base of the induction would be trivial), but it seems much clearer to explain the one-dimensional case explicitly, as the general case is much longer, but follows the same idea.} Then $H = [0, \infty)$.

Since $\C$ handles singleton inductive interval extensions, it handles the extension $((0, \infty), [0, \infty))$ meaning that there is a rule of the form $R_\infty = (C, D)$ where $C \Subset (1, \infty) \subset D$ in $\C$, in particular $C \subset [1, r]$ for some $r$. Since $\C$ handles also singleton extensions of finite inductive intervals, it contains rules $R_t = (C, D)$ where $C \subset [1, t] \subset D$ for all $t \in \N$ (where $[1,0] = \emptyset$). We claim that the manual $\{ R_\infty, R_0, R_1, \ldots, R_{r-1} \}$ builds $[0, \infty)$.

For this, it suffices to show that it builds the set $[0, m]$ for arbitrary $m \geq r$ (as this eventually contains every finite subset of $[0, \infty)$). Indeed, we have
\begin{align*} \emptyset &\nearrow_{R_0,m} \{m\} \\
&\nearrow_{R_1,m-1} \{m-1, m\} \\
&\nearrow_{R_2,m-2} \{m-2, m-1, m\} \\
&\cdots \\
&\nearrow_{R_{r-1,m-r+1}} \{m-r+1, m-r+1, \ldots, m\} \\
&\nearrow_{R_\infty,m-r} \{m-r, m-r+1, \ldots, m\} \\
&\cdots \\
&\nearrow_{R_\infty,0} \{0, 1, \ldots, m\}.
\end{align*}

\textbf{Dimensions greater than one.} Next, consider $d > 1$. Let $\C' = T_\infty \C$ be the halfspace trace of $\C$. By Lemma~\ref{lem:TraceHandles}, it is down and handles all singleton inductive interval extensions. Thus by induction there is a finite submanual $\mathcal{R}' = \{R_1, \ldots, R_k\}$ of $\C'$ that builds the family of all sufficiently slanted rational halfspaces. By the definition of $\C'$, each $R_i = (C', D')$ comes from a rule $\hat R_i = (C, D) \in \C$ where $C \subset T_{\infty}$ and $T_{1,\infty} \subset D$, by $C' = \pi(C \cap S_0), D' = \pi(D \cap S_0)$. 
We observe that the sets $C$ are finite subsets of $T_{\infty}$, and there are $k$ many of them, so for some $r$ we have $C \subset T_r$ for all $i$. Let $\hat{\mathcal{R}}' = \{\hat{R}_1, \ldots, \hat{R}_k\}$.

Next, $\C_t = T_t \C$ for $t \in [0, r-1]$. By Lemma~\ref{lem:TraceHandles}, it is down and handles all singleton inductive interval extensions. Again, there is a finite submanual $\mathcal{R}_t = \{R_{t,1}, \ldots, R_{t,k_t}\}$ of $\C_t$ that already builds the family of all sufficiently slanted rational halfspaces. By the definition of $\C_t$, each $R_{t,i} = (C', D')$ comes from a rule $\hat R_{t,i} = (C, D) \in \C$ where $C \subset T_t$, $T_{1,t} \subset D$, by $C' = \pi(C \cap S_0), D' = \pi(D \cap S_0)$. 



Now entirely analogously to the case $d = 1$, we will show that if $\vec w$ is sufficiently slanted, then
\begin{itemize}
\item $H \cap S_m$ is buildable using $\hat{\mathcal{R}}_0$,
\item the extension $(H \cap S_m, H \cap T_{m-1, m})$ is buildable using $\hat{\mathcal{R}}_1$,
\item $\ldots$
\item the extension $(H \cap T_{m-r+2, m}), H \cap T_{m-r+1, m})$ is buildable using $\hat{\mathcal{R}}_{r-1}$,
\item the extension $(H \cap T_{m-r+1, m}), H \cap T_{m-r, m})$ is buildable using $\hat{\mathcal{R}}'$,
\item $\ldots$
\item the extension $(H \cap T_{2,m}, H \cap T_{1,m})$ is buildable using $\hat{\mathcal{R}}'$,
\item the extension $(H \cap T_{1,m}, H \cap T_m)$ is buildable using $\hat{\mathcal{R}}'$.
\end{itemize}

We now write a formal proof of this argument, which is entirely straightforward but lengthy. To be able to cover in one go both the cases using $\hat{\mathcal{R}}_i$ and the cases using $\hat{\mathcal{R}}'$, we set $\hat{\mathcal{R}}_t = \hat{\mathcal{R}}'$ for $t \geq r$. We also write $\hat{\mathcal{R}}_t = \{R_{t,1}, \ldots, R_{t,k_t}\}$ in this case, with $k_t = k$ and $R_{t, i} = R_i$. 


So let us build $H \cap T_{m-t, m}$ from $H \cap T_{m-t+1, m}$ using $\hat{\mathcal{R}}_t$ for $t = 0, \ldots, m$. In the case $t = 0$, note that we want to build $H \cap T_{m, m} = H \cap S_m$ from $H \cap T_{m+1, m} = H \cap \emptyset = \emptyset$, so this case indeed corresponds to the first item. In the case $t = m$, we build $H \cap T_{0, m}$ from $H \cap T_{1, m}$ so this indeed corresponds to the last item.

We observe that by shifting by a vector on $\partial H$ with last coordinate equal to $-m + t$ (as in the proof of Lemma~\ref{lem:HalfSpaceThing}), we can equivalently build $H \cap T_{0, t}$ from $H \cap T_{1, t}$. 
By the definition of building, it suffices to build arbitrarily large finite subsets of the frontier
\[ H' = S_0 \cap H = H \cap T_{0, t} \setminus H \cap T_{1, t} \]
of the extension. In other words, it suffices to consider a finite subset $S \Subset H'$ and show that we can build a set between $S$ and $H'$ using $\hat{\mathcal{R}}_t$.

Now observe that $\pi H'$ is a halfspace in dimension $d-1$. By Lemma~\ref{lem:SlantednessOfSlice}, the slantedness of $\pi H'$ increases as a function of slantedness of $H$, thus for $H$ sufficiently slanted, also $\pi H'$ is sufficiently slanted so that $\mathcal{R}_t = \{R_{t,1}, \ldots, R_{t,k_t}\}$ builds arbitrarily large finite subsets of $\pi H'$. In particular if $S' = \pi(S)$, by the assumption, $\mathcal{R}_t$ builds some subset of $\pi H'$ larger than $S'$, i.e.\ there exists a valid blueprint $((b_1, \vec u_1), (b_2, \vec u_2), \ldots, (b_\ell, \vec u_\ell))$ for $\mathcal{R}_t$ such that
\[ S' \subset U = \{\vec u_i \;|\; i = 1, \ldots, \ell\} \subset \pi H'. \]

We will now define a blueprint $((\hat b_1, \vec v_1), (\hat b_2, \vec v_2), \ldots, (\hat b_\ell, \vec v_\ell))$ for $\hat{\mathcal{R}}_t$ such that if $H$ is sufficiently slanted, then this blueprint is valid and builds a set between $S$ and $H'$ from $H \cap T_{1, t}$ using $\hat{\mathcal{R}}_t$. For this, for $i = 1, \ldots, \ell$, define $\hat b_i = \hat R_{t, j}$ when $b_i = R_{t, j}$; and define $\vec v_i = \pi^{-1}(\vec u_i)$. First, we observe that the set built by this blueprint is $V = \{\vec v_i \;|\; i = 1, \ldots, \ell\}$, where $\vec v_i = \pi^{-1}(\vec u_i)$. Since $\pi^{-1}$ is a bijection taking $S'$ to $S$, $\pi H'$ to $H'$, and $U$ to $V$, we have $S \subset V \subset H'$, so our new blueprint indeed builds a set between $S$ and $H'$.

Now all that remains is to show that this blueprint is valid. Let $b_i = R_{t, j} = (C', D')$ and $\hat b_i = \hat R_{t, j} = (C, D)$. Recall that $C \subset T_t$, $T_{1,t} \subset D$, $C' = \pi(C \cap S_0)$, and $D' = \pi(D \cap S_0)$. Note that if $t < r$, then $C \subset T_t$, $T_{1,t} \subset D$ is directly in the assumptions, while if $t \geq r$, then the rule comes from $\mathcal{R}'$ and we have $C \subset T_r \subset T_t$ and $T_{1,t} \subset T_{1,\infty} \subset D$

\textbf{Checking the supports.} First we check the supports of the blueprint. Thus we need to show that
\begin{equation}
\label{eq:Inclusion}
\vec v_i + C \subset \{\vec v_s \;|\; s = 1, \ldots, i-1\} \cup (H \cap T_{1, t}).
\end{equation}
Since the $b_j$ form a valid blueprint that builds the set $U$ (from nothing), we know that their supports are valid, i.e.\
\begin{equation}
\label{eq:A1} \vec u_i + C' \subset \{\vec u_s \;|\; s = 1, \ldots, i-1\}.
\end{equation}

We check the inclusion~\eqref{eq:Inclusion} in the $0$-slice and elsewhere (in the sense of Lemma~\ref{lem:SetInclusionCases}). It is clear that in the $0$-slice, we have the desired containment, as in this slice the inclusion simplifies to
\[ \vec v_i + (C \cap S_0) \subset \{\vec v_s \;|\; s = 1, \ldots, i-1\} \]
which is immediate from applying the bijection $\pi^{-1}$ to \eqref{eq:A1}.

Since $C \subset T_t$ and $\vec v \in H'$, the remaining vectors outside the $S_0$-slice are of the form $\vec v \in \vec v_i + (C \cap T_{1, t})$. Since there are only finitely many rules $\hat R_{t, j}$ to consider, $\vec v$ is at a bounded distance from $\vec v$, thus at bounded distance from $H' = S_0 \cap H$. Since they belong to $T_{1, t} \subset T_{1, \infty}$, by Lemma~\ref{lem:Inside}, if $H$ is sufficiently slanted, we have $\vec v \in H$. And of course we then even have $\vec v \in H \cap T_{1, t}$, which completes the proof of the inclusion.

\textbf{Checking the guards.} Next, we check the guards of the blueprint. We need to show that
\[ \{\vec v_s \;|\; s = 1, \ldots, i-1\} \cup (H \cap T_{1,t}) \subset \vec v_i + D. \]
Since the $b_j$ form a valid blueprint that builds the set $U$ (from nothing), we know that their guards are valid, i.e.\
\begin{equation}
\label{eq:A2} \{\vec u_s \;|\; s = 1, \ldots, i-1\} \subset \vec u_i + D'.
\end{equation}

We apply Lemma~\ref{lem:SetInclusionCases} and check the inclusion separately in the $0$-slice and elsewhere. It is clear that in the $0$-slice, we have the desired containment, as in this slice the inclusion simplifies to
\[ \{\vec v_s \;|\; s = 1, \ldots, i-1\} \subset \vec v_i + (D \cap S_0). \]
which is immediate from applying the bijection $\pi^{-1}$ to \eqref{eq:A2}.

The remaining vectors are in $H \cap T_{1,t}$. We have by the assumption on $D$ (and since $\vec v_i \in S_0$) that $-\vec v_i + T_{1,t} = T_{1,t} \subset D$, from which
\[ H \cap T_{1,t} \subset T_{1,r} \subset \vec v_i + D, \]
which completes the proof of the inclusion.

\textbf{Wrapping up.} By Lemma~\ref{lem:BuildabilityTransitive}, combining the steps gives us that $H \cap T_m$ is buildable for all $m$ if $H$ is sufficiently slanted. Looking at the proof, we see that this sufficient slant only depends on geometric properties of the finitely many rules in $\hat{\mathcal{R}}$. Indeed, the choice of these rules implied some new slantedness requirements, but at this point $m$ was not yet fixed.

For each $m$, during the building process, we again required various slants, but in each case these only depended on the maximal norm of a vector in one of the sets $C$ appearing in the guards of the rules, given by Lemma~\ref{lem:Inside}. Thus, these requirements did not actually depend on $m$. All in all, we have only finitely many slantedness requirements on $H$, that allow all of the $H \cap T_m$ to be built, and we can combine into one using Remark~\ref{rem:SlantednessJoin}.

Then by Lemma~\ref{lem:BuildClosure} we can build $H \cap T_\infty$, and by Lemma~\ref{lem:HalfSpaceThing} and Lemma~\ref{lem:BuildClosure} we can then build $H$. All of this is done using the finite manual $\hat{\mathcal{R}}$, so $H$ is indeed finitely built.
\end{proof}


Now we prove Theorem~\ref{thm:MainTechnical}.

\begin{proof}[Proof of Theorem~\ref{thm:MainTechnical}]
Let $\C$ be a down, monotone up and permissive manual that handles all inductive interval extensions. We prove by induction on dimension that $\C$ handles in parallel the family of all sufficiently slanted rational halfspace extensions. 


The case $d = 1$ is trivial: $\C$ handles the extension $((0, \infty), [0, \infty))$ so some rule handles it. But up to translation, $((0, \infty), [0, \infty))$ is the only rational halfspace extension. Since it is a singleton extension, the rule handles it in parallel. 

Now suppose $d \geq 2$, and suppose $\vec w \in \Z^d$ is sufficiently slanted (we will state the precise slantedness requirements as they arise). We will consider the halfspace extension $(H_{0, \vec w}^\circ, \bar H_{0, \vec w})$. By Lemma~\ref{lem:TraceHandles}, $T_\infty \C$ is a down, monotone up and permissive manual that handles all singleton inductive interval extensions in dimension $d-1$. By induction, $T_\infty \C$ handles in parallel the family of all sufficiently slanted minimal rational halfspace extensions in dimension $d-1$. Let $R' = (C', D')$ be the rule of $T_\infty \C$ that handles this family.

Let $\vec w'$ be the vector obtained from $\vec w$ by removing the last coordinate. By Lemma~\ref{lem:SlantednessOfSlice}, if $\vec w$ is sufficiently slanted, then $R'$ handles the extension $(H_{\vec w', 0}^\circ, \bar H_{\vec w', 0})$ in parallel. 
 Recall that this implies that $C' \subset H_{\vec w', 0}^\circ$ and $\bar H_{\vec w', 0} \setminus \{\vec 0\} \subset D'$. Since $T_\infty \C$ is down (by Lemma~\ref{lem:TraceHandles}), we may assume $D' = \bar H_{\vec w', 0} \setminus \{\vec 0\}$.


By the definition of $T_\infty \C$ 
there is a rule $R = (C, D) \in \C$ such that $C \subset T_\infty$, $T_{1,\infty} \subset D$, and $C' = \pi(C \cap S_0), D' = \pi(D \cap S_0)$. Since $\C$ is down, we can take $D = T_{1, \infty} \sqcup (D \cap S_0)$, and since we assumed $D' = \bar H_{\vec w', 0} \setminus \{\vec 0\}$, we have precisely
\[ D = T_{1, \infty} \cup (S_0 \cap (\bar H_{\vec w, 0} \setminus \{\vec 0\})). \]

We now \textit{attempt to} show that $R$ handles in parallel all sufficiently slanted parallel halfspace extensions. For this, we would need to show that if $H$ is sufficiently slanted, then we have $C \subset H_{\vec w, 0}^\circ$ (support) and $\bar H_{\vec w, 0} \setminus \{\vec 0\} \subset D$ (guard).

\textbf{Support and guard on the slice.} We use Lemma~\ref{lem:SetInclusionCases} with the slice $S_0$. Inside the slice, we have to check $C \cap S_0 \subset H_{\vec w, 0}^\circ$ and $(\bar H_{\vec w, 0} \cap S_0) \setminus \{\vec 0\} \subset D$. For this, simply apply $\pi^{-1}$ on both sides of the inclusions $C' \subset H_{\vec w', 0}^\circ$ and $\bar H_{\vec w', 0} \setminus \{\vec 0\} \subset D'$.

\textbf{Support.} Outside the slice, consider first $C \subset H_{\vec w, 0}^\circ$. Since $C \subset T_\infty$, it suffices to show that $C \cap T_{1, \infty} \subset H_{\vec w, 0}^\circ$. If $\vec w$ is sufficiently slanted, this is immediate from Lemma~\ref{lem:Inside}, since $C$ is finite.

\textbf{Guard.} As for $\bar H_{\vec w, 0} \setminus \{\vec 0\} \subset D$, use again Lemma~\ref{lem:SetInclusionCases} now on $T_{1, \infty}$ to get the cases
\[ \bar H_{\vec w, 0} \cap T_{1, \infty} \setminus \{\vec 0\} \subset D\]
and
\[ \bar H_{\vec w, 0} \cap T_{-\infty, -1} \setminus \{\vec 0\} \subset D. \]
The first of these is trivial since $D \supset T_{1, \infty}$.

Unfortunately,
\[ \bar H_{\vec w, 0} \cap T_{-\infty, -1} \setminus \{\vec 0\} \subset D \]
is not necessarily true, as we only know $T_{1,\infty} \subset D$. We will now use permissivity to show that there exists $(C, \bar D) \in \C$ where $\bar D$ contains $D \cup \bar H_{\vec w, 0} \cap T_{-\infty, -1}$. Then $(C, \bar D)$ still works for all the previous cases (since $C$ stays the same and $\bar D \supset D$), and also this last containment checks out.


We first apply Lemma~\ref{lem:BuildHalfspaces} to $T_\infty \C$ to obtain that there is a finite set of rules $(R_1, \ldots, R_k)$ in $T_\infty \C$ which can build every sufficiently slanted rational halfspace. By the definition of $T_\infty \C$, each $R_i = (A', B')$ comes from $\hat R_i = (A, B) \in \C$ such that $A' = \pi(A \cap S_0)$, $B' = \pi(B \cap S_0)$, $A \Subset T_\infty$ and $T_{1, \infty} \subset B$.



Let $D_i = D \cup (H_{\vec w, 0} \cap T_{-i, -1})$ and $\bar D = \bigcup_i D_i$, so indeed $\bar D = D \cup (H_{\vec w, 0} \cap T_{-\infty, -1})$. We note that $D_{i+1} \setminus D_i = H_{\vec w, 0} \cap S_{-i-1}$. This is because we chose $D = T_{1, \infty} \cup (D \cap S_0)$. Note also that
\[ D_i \supset \bar H_{\vec w, 0} \cap T_{-i, \infty} \setminus \{\vec 0\} \]
because $D \supset S_0 \cap (\bar H_{\vec w, 0} \setminus \{\vec 0\}))$ from above.

We prove by induction that $(C, D_i) \in \C$ for all $i$. We have $(C, D_0) = (C, D) \in \C$ by assumption. Assume $(C, D_i) \in \C$. For obtaining $(C, D_{i+1}) \in \C$, we will use that $\C$ is monotone up, so it suffices to show $(C, D_i \cup E) \in \C$ for arbitrarily large sets $E \subset D_{i+1} \setminus D_i = H_{\vec w, 0} \cap S_{-i-1}$. Fix a finite set $E \subset H_{\vec w, 0} \cap S_{-i-1}$.

Note that $H_{\vec w, 0} \cap S_{-i-1}$ contains precisely vectors $\vec u$ such that $\vec u_d = -i-1$ and $\vec u' \in H_{\vec w', (i+1) \vec w_d}$, where $\vec u'$ is obtained from $\vec u$ by removing the last coordinate. Let $E' \subset H_{\vec w', (i+1) \vec w_d}$ be obtained from $E$ by removing the last coordinates of each vector (which are all $-i-1$).

Since $(R_1, \ldots, R_k)$ can build every sufficiently slanted rational halfspace (in dimension $d-1$), it can build the halfspace $H_{\vec w', (i+1) \vec w_d}$, in particular it can directly build a finite set $F'$ with $E' \subset F' \Subset H_{\vec w', (i+1) \vec w_d}$. Let $((S_1', \vec v_1'), \ldots, (S_m', \vec v_m'))$ be a blueprint that builds such $F'$, where $S_\ell' \in \{R_j \;|\; j = 1, \ldots, k\}$ for all $\ell$. So $F' = \{\vec v_j' \;|\; j = 1, \ldots, m\}$, and if $R_j = (A', B')$ then
\[ \vec v_j' + A' \subset \{\vec v_\ell' \;|\; \ell < j\} \subset \vec v_j' + B'. \]

Define $\vec v_\ell = (\vec v_\ell', -i-1)$ and $S_\ell = \hat R_j \iff S_\ell' = R_j$. Of course, then $((S_1, \vec v_1), \ldots, (S_k, \vec v_k))$ builds a set $F$, which is obtained from $F'$ by adding $-i-1$ as the last coordinate of each vector. Such a set $F$ then contains $E$ and is contained in $H_{\vec w, 0} \cap S_{-i-1}$ so it suffices to show that this blueprint is valid for $D_i$.

For this, consider the application of $\hat R_j$ at $\vec v_j$. If $R_i = (A', B')$, then $\hat R_i = (A, B) \in \C$ such that $A' = \pi(A \cap S_0)$, $B' = \pi(B \cap S_0)$, $A \Subset T_\infty$ and $T_{1, \infty} \subset B$.

We need to check
\[ \vec v_j + A \subset D_i \cup \{\vec v_\ell \;|\; \ell < j\} \subset \vec v_j + B. \]
Apply Lemma~\ref{lem:SetInclusionCases} to the slice $S_{-i-1}$. In this slice, the formula simplifies to 
\[ \vec v_j + (A \cap S_{i-1}) \subset \{\vec v_\ell \;|\; \ell < j\} \subset \vec v_j + (B \cap S_{i-1}) \]
which is immediate from
\[ \vec v_j' + A' \subset \{\vec v_\ell' \;|\; \ell < j\} \subset \vec v_j' + B' \]
(which is the same chain of inclusions but with $-i-1$ as the last coordinate of each vector).

Outside the slice, for supports we check $\vec t \in \vec v_j + A$ where $\vec t_d > (\vec v_j)_d$. Since $A$ is one of finitely many sets, we have $\vec t \in B_r(\vec v_j)$ for some bounded $r$, and it follows from Lemma~\ref{lem:Inside} that if $\vec w$ is sufficiently slanted, then $\vec t \subset D_i$ because $D_i \supset \bar H_{\vec w, 0} \cap T_{-i, \infty}$. 

For guards outside the slice, we need to check $D_i \cup \{\vec v_\ell \;|\; \ell < j\} \subset \vec v_j' + \bar B$. But in vectors of $D_i \cup \{\vec v_\ell \;|\; \ell < j\}$ outside $S_0$, the $d$-coordinate is at least $-i$, and since $B$ contains $T_{1, \infty}$, $v_j' + B$ contains all such vectors.
\end{proof}

\section{Avoshifts, unishifts, and their manuals}

\begin{definition}
The \emph{avomanual} of a subshift $X$ is the manual containing $(C, D)$ if for all $x \in X$, $\follow_X(x|D, \vec 0) = \follow_X(x|C, \vec 0)$. A subshift $X$ is an \emph{avoshift}, or just \emph{avo}, \emph{for the family of sets $\SSS$}, if its avomanual handles all singleton $\SSS$-extensions.
\end{definition}

\begin{definition}
The subshift $X$ is an \emph{unishift}, or just \emph{uni}, for the family of sets $\SSS$, if whenever $D, D \cup \{\vec 0\} \in \Z^d + \SSS$, we have $|\follow_X(x|D, \vec 0)| = \follow_X(y|D, \vec 0)|$ for all $x, y \in X$.
\end{definition}

Here, $\Z^d + \SSS$ is the family of translations of sets $S \in \SSS$ (this is needed because our definition of inductive intervals is not shift-invariant). We can also state the definition of a unishift in terms of manuals. This is a little technical, but has the benefit that it allows direct application of manual theory to unishifts.

\begin{definition}
The \emph{unimanual} of a subshift $X$ is the manual containing $(C, D)$ if for all $x \in X$, $\follow_X(x|D, \vec 0) = \follow_X(x|C, \vec 0)$, and furthermore $|\follow_X(x|D, \vec 0)| = |\follow_X(y|D, \vec 0)|$ for all $x, y \in X$.
\end{definition}

Note that the unimanual is a subset of the avomanual.

For the next lemma, a family of sets $\C \subset \pow(\Z^d)$ is \emph{monotone up} if $C_i \in \C$ and $C_i \subset C_{i+1}$ for all $i$ imply $\bigcup_i C_i \in \C$. A family of sets $\C \subset \pow(\Z^d)$ has a \emph{basis of finite sets at the origin} if every $C \in \C$ such that $C \cup \{\vec 0\} \in \C$ is an increasing union of finite sets $D \Subset C$ such that $D, D \cup \{\vec 0\} \in \C$.

\begin{lemma}
Let $\SSS$ be a shift-invariant monotone up family of sets which has a basis of finite sets at the origin. A subshift $X$ is uni for $\SSS$ if and only if its unimanual handles all singleton $\SSS$-extensions.
\end{lemma}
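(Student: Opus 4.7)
The plan is to prove the two implications separately. The direction $(\Leftarrow)$ is essentially formal and follows from monotonicity of follower sets. The more substantive direction $(\Rightarrow)$ must upgrade the uni property, which a priori is a statement about arbitrary (possibly infinite) $D \in \SSS$ with $D \cup \{\vec 0\} \in \SSS$, into a rule with a \emph{finite} support $C$; this is exactly where the ``basis of finite sets at the origin'' hypothesis will be indispensable.

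For $(\Leftarrow)$, assume the unimanual handles every singleton $\SSS$-extension and take $D, D \cup \{\vec 0\} \in \Z^d + \SSS$. Shift-invariance of $\SSS$ lets us assume the extension point is $\vec 0$, so that $(D, D \cup \{\vec 0\})$ is itself a singleton $\SSS$-extension, handled by some rule $(C, D') \in$ unimanual with $C \subset D \subset D'$. The unimanual definition gives $\follow_X(x|D', \vec 0) = \follow_X(x|C, \vec 0)$ for every $x$, with this set having cardinality independent of $x$. By Lemma~\ref{lem:FollowMonotone}, $\follow_X(x|D, \vec 0)$ is sandwiched between these two coinciding sets, so all three agree and $|\follow_X(x|D, \vec 0)|$ is also independent of $x$.

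For $(\Rightarrow)$, assume $X$ is uni for $\SSS$ and fix a singleton $\SSS$-extension $(E, E')$; using shift-invariance again we may assume $E' = E \cup \{\vec 0\}$ with $E, E \cup \{\vec 0\} \in \SSS$. The basis-of-finite-sets hypothesis lets us write $E = \bigcup_i D_i$ as an increasing union of finite sets with $D_i, D_i \cup \{\vec 0\} \in \SSS$. The uni property then produces cardinalities $n_i := |\follow_X(x|D_i, \vec 0)|$ and $n := |\follow_X(x|E, \vec 0)|$, each independent of $x$. Picking a single $x_0 \in X$, Lemma~\ref{lem:FollowCompact} supplies a finite $F \Subset E$ with $\follow_X(x_0|F, \vec 0) = \follow_X(x_0|E, \vec 0)$; choosing $I$ large enough that $F \subset D_I$ forces $n_I = n$, and because $n_i$ does not depend on the choice of point, this single witness suffices to conclude $n_i = n$ for all $i \geq I$. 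Then for every $x$ the inclusion $\follow_X(x|D_I, \vec 0) \supset \follow_X(x|E, \vec 0)$ from Lemma~\ref{lem:FollowMonotone} combined with equal cardinalities forces equality, so $(D_I, E)$ is a unimanual rule that handles the extension at $\vec 0$. The only real subtlety, as flagged above, is the uniform-in-$x$ stabilization of the follower sets, which works precisely because the $n_i$ were already constant in $x$ by the uni hypothesis.
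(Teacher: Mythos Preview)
Your proof is correct and follows essentially the same approach as the paper's. Both directions match: for $(\Leftarrow)$ you sandwich $\follow_X(x|D,\vec 0)$ between $\follow_X(x|D',\vec 0)$ and $\follow_X(x|C,\vec 0)$ exactly as the paper does, and for $(\Rightarrow)$ you use the basis of finite sets together with compactness (Lemma~\ref{lem:FollowCompact}) to find a finite $D_I$ where the follower set stabilizes for one test point, then transfer to all points via the uni hypothesis---the paper does the same, though it phrases the stabilization step as ``$|\follow_X(z|C_i,\vec 0)|$ is a decreasing sequence'' without explicitly naming Lemma~\ref{lem:FollowCompact}.
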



\begin{proof}
Suppose $X$ is uni for the family $\SSS$. Consider a singleton $\SSS$-extension $(D, D \cup \{\vec 0\})$. Since $X$ is uni for $\SSS$, we have $|\follow_X(x|D, \vec 0)| = \follow_X(y|D, \vec 0)|$ for all $x, y \in X$. Let $z$ be arbitrary. Let $C_i \in \SSS$ be finite sets such that $D = \bigcup_i C_i$ and $C_i \cup \{\vec 0\} \in \SSS$. Then $|\follow_X(z|C_i, \vec 0)|$ is a decreasing sequence by Lemma~\ref{lem:FollowMonotone}, therefore $|\follow_X(z|C, \vec 0)| = |\follow_X(z|D, \vec 0)|$ for some $C = C_i$.

Since $C, C \cup \{\vec 0\} \in \SSS$, we have for any $x \in X$ that
\[ |\follow_X(x|C, \vec 0)| = |\follow_X(z|C, \vec 0)| = |\follow_X(z|D, \vec 0)| = |\follow_X(x|D, \vec 0)|, \]
which again by Lemma~\ref{lem:FollowMonotone} implies that $\follow_X(x|D, \vec 0) = \follow_X(x|C, \vec 0)$ for all $x \in X$, and furthermore $|\follow_X(x|D, \vec 0)| = |\follow_X(z|D, \vec 0)| = |\follow_X(y|D, \vec 0)|$ for all $x, y \in X$. We conclude that $(C, D)$ is in the unimanual. It clearly handles the extension $(D, D \cup \{\vec 0\})$.

In the other direction, suppose the unimanual handles all $\SSS$-extensions. Then in particular for any $\SSS$-extension $(D, D \cup \{\vec 0\})$ we have some rule $(C, D')$ in the unimanual such that $C \subset D, D \subset D'  \not\ni \vec 0$, and $\follow_X(x|D', \vec 0) = \follow_X(x|C, \vec 0)$ for all $x \in X$, and $|\follow_X(x|D', \vec 0)| = |\follow_X(y|D', \vec 0)|$ for all $y \in X$. Since $C \subset D \subset D'$, it follows from $\follow_X(x|D', \vec 0) = \follow_X(x|C, \vec 0)$ that $\follow_X(x|D, \vec 0) = \follow_X(x|D', \vec 0)$ for all $x \in X$, thus $|\follow_X(x|D, \vec 0)| = |\follow_X(y|D, \vec 0)|$ meaning $X$ is uni for $\SSS$.
\end{proof}

If $N \Subset \Z^d$ and $X \subset A^{\Z^d}$ is a subshift, the \emph{$N$-blocking} of $X$ is the subshift with alphabet $B = X|N$ and configurations
\[ Y = \{y \in B^{\Z^d} \;|\; \exists x \in X: x|\vec v + N = y_{\vec v}\} \]
where we identify patterns up to a shift. Note that $X$ and $Y$ are topologically conjugate (when $N$ is nonempty).

\begin{lemma}
Let $N \Subset \Z^d$ and let $X \subset A^{\Z^d}$ be a subshift with avomanual (resp.\ unimanual) $\C$. Them the avomanual (resp.\ unimanual) $\D$ of the $N$-blocking $Y$ of $X$ contains the $N$-blocking of $\C$.
\end{lemma}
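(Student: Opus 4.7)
The plan is to show that any $(C', D')$ in the $N$-blocking of $\C$ lies in $\D$; that is, $\follow_Y(y|D', \vec 0) = \follow_Y(y|C', \vec 0)$ for every $y \in Y$. The inclusion $\subseteq$ is immediate from Lemma~\ref{lem:FollowMonotone}, so the substance lies in the reverse inclusion. I identify each $y \in Y$ with an underlying $x \in X$ so that $y_{\vec v}$ corresponds to $x|(\vec v+N)$, and identify each symbol $c \in B$ with its $X$-pattern $q \in X|N$.

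Given $c \in \follow_Y(y|C', \vec 0)$, I fix a witness $x' \in X$ satisfying $x'|(C' + N) = x|(C' + N)$ and $x'|N = q$. The hypothesis on $(C', D')$ supplies a $\C$-blueprint $((R_1,\vec v_1),\ldots,(R_k,\vec v_k))$ with $R_j=(C_j,D_j)$ that builds $(D' \cup \{\vec 0\}) + N$ from $D' + N$ whose positioned supports, intersected with $D' + N$, lie inside $C' + N$; in particular $\{\vec v_1,\ldots,\vec v_k\} = N \setminus (D' + N)$. Starting from $r_0 = x|(D' + N)$, I would inductively set $(r_j)_{\vec v_j} := x'_{\vec v_j}$, aiming to produce a globally valid $X$-pattern $r_k$ on $(D' + N)\cup N$.

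The key step, which I expect to be the main technical point, is verifying at each stage that $x'_{\vec v_j}$ is a valid extension of $r_{j-1}$. Since $R_j$ lies in the avomanual, its defining property yields $\follow_X(r_{j-1},\vec v_j) = \follow_X(r_{j-1}|(\vec v_j+C_j),\vec v_j)$. Blueprint validity combined with the support condition forces $\vec v_j + C_j \subset (C' + N) \cup \{\vec v_i : i < j\}$, and an induction (base case: $C' + N \subset D' + N$ means $r_0$ already agrees with $x'$ there) shows $r_{j-1}$ coincides with $x'$ on this union, hence $r_{j-1}|(\vec v_j+C_j) = x'|(\vec v_j+C_j)$. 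Since $x' \in X$, the value $x'_{\vec v_j}$ lies in $\follow_X(x'|(\vec v_j+C_j),\vec v_j) = \follow_X(r_{j-1},\vec v_j)$, so the step is valid. Compactness extends $r_k$ to some $x'' \in X$, yielding $y'' \in Y$ with $y''|D' = y|D'$ and $y''_{\vec 0} = c$, so $c \in \follow_Y(y|D', \vec 0)$. For the unimanual version, the same blueprint consists of unimanual rules by assumption, so $|\follow_X(r_{j-1},\vec v_j)|$ is independent of $r_{j-1}$ at each step; multiplying these constants along the blueprint shows that the count of $Y$-extensions of $y|D'$ to $D' \cup \{\vec 0\}$ is the same for every $y$, placing $(C', D')$ in the unimanual of $Y$.
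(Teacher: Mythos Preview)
Your argument is correct and follows essentially the same route as the paper: both translate the $Y$-follower problem to an $X$-follower problem via $\follow_Y(y|D',\vec 0) \leftrightarrow \follow_X(x|(D'+N),N)$, then walk along the $\C$-blueprint, at each step using the avomanual property (and sandwiching of the current domain between $\vec v_j+C_j$ and $\vec v_j+D_j$) to transplant the value from the witness configuration. Your treatment of the unimanual case, multiplying the constant extension counts along the blueprint, is slightly more explicit than the paper's, which simply declares the proof ``similar''.
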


\begin{proof}
Let $N, \C, \D, X, Y$ be as in the statement, and let $\phi : X \to Y$ be the $N$-blocking conjugacy. We show the claim for the avomanual, the proof for the unimanual being similar. Let $(C', D')$ be in the $N$-blocking of $\C$, meaning $C' \Subset D' \subset \Z^d \setminus \{\vec 0\}$ and $N$ is directly $\C$-buildable from $D' + N$ with support $C' + N$. This means there is a blueprint $(((C_1, D_1), \vec v_1), \ldots, ((C_k, D_k), \vec v_k))$ that builds it with positioned supports contained in $C' + N$. This in turn means $(C_i, D_i) \in \C$, (possibly up to removing unnecessary rule applications) $V_k = N$ where we write $V_j = \{\vec v_i \;|\; i = 1, \ldots, j\}$,
\[ \vec v_i + C_i \subset C' + N \cup V_{i-1}, \]
and
\[ D' + N \cup V_{i-1} \subset \vec v_i + D_i \]

We need to show $(C', D') \in \D$, i.e.\ that for all $y \in Y$, we have $\follow_X(y|D', \vec 0) = \follow_X(y|C', \vec 0)$. Let $D = D' + N$ and $C = C' + N$. Then in fact $\follow_Y(y|D', \vec 0) = \follow_X(\phi^{-1}(y)|D, N)$ and $\follow_Y(y|C', \vec 0) = \follow_X(\phi^{-1}(y)|C, N)$, with the understanding that on the right we have an $N$-pattern, and on the left we have a single symbol which represents an $N$-pattern.

Thus it suffices to show $\follow_X(x|C, N) = \follow_X(x|D, N)$ for all $x \in X$, and by Lemma~\ref{lem:FollowMonotone} it suffices to show $\follow_X(x|C, N) \subset \follow_X(x|D, N)$. For this, consider $p \in X|C \cup N$ (so $p|N \in \follow_X(p|C, N)$). Since $p|C \in X|C$, there exists $x \in X|D$ such that $x|C = p|C$. Define 
\[ x^i = x \sqcup (\vec v_1 \mapsto p_{\vec v_1}) \sqcup \cdots \sqcup (\vec v_i \mapsto p_{\vec v_i}). \]
In other words, we have $x^0 = x$, and $x^{i+1} = x^i \sqcup (\vec v_{i+1} \mapsto p_{\vec v_{i+1}})$.

We prove by induction that $x^i \sqsubset X$. Of course $x^0 = x \sqsubset X$. Now assume that $x^i \sqsubset X$. We now recall that $(C_{i+1}, D_{i+1}) \in \C$ i.e.\
\begin{equation}
\label{eq:Derpo}\follow_X(x|\vec v_{i+1} + C_{i+1}, \vec v_{i+1}) = \follow_X(x|\vec v_{i+1} + D_{i+1}, \vec v_{i+1})
\end{equation}
Since
\[ \vec v_{i+1} + C_{i+1} \subset (C' + N) \cup V_{i-1} \subset (D' + N) \cup V_i \subset \vec v_{i+1} + D_{i+1}, \]
it follows from Lemma~\ref{lem:FollowMonotone} that
\[ \follow_X(x^i|\vec v_{i+1} + C_{i+1}, \vec v_{i+1}) = \follow_X(x^i|(D' + N) \cup V_i, \vec v_{i+1}), \]
where we observed that $(D' + N) \cup V_i$ is precisely the domain of $x^i$, and applied \eqref{eq:Derpo} to any globally valid extension of $x^i$.

We claim we have $p_{\vec v_{i+1}} \in \follow_X(x^i|\vec v_{i+1} + C_{i+1}, \vec v_{i+1})$. This is because
\[ \vec v_{i+1} + C_{i+1} \subset C' + N \cup V_i \subset C \cup N, \]
which is the domain of $p$, and
\[ \follow_X(x^i|\vec v_{i+1} + C_{i+1}, \vec v_{i+1}) = \follow_X(p|\vec v_{i+1} + C_{i+1}, \vec v_{i+1}) \ni p_{\vec v_{i+1}} \]
since $p \sqsubset X$.

It follows that $p_{\vec v_{i+1}} \in \follow_X(x^i|(D' + N) \cup V_i, \vec v_{i+1})$, so $x^{i+1} = x^i \sqcup (\vec v_{i+1} \mapsto p_{\vec v_{i+1}})$ is valid.
\end{proof}

\begin{lemma}
Suppose $X$ is avo (resp.\ uni) for convex sets, and $Y$ is the $N$-blocking of $X$ for a convex finite set $N$. Then $Y$ is avo (resp.\ uni) for convex sets.
\end{lemma}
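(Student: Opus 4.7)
The plan is to assemble the proof from the two preceding lemmas (the one relating the avomanual/unimanual of $Y$ to the $N$-blocking of that of $X$, and Lemma~\ref{lem:ConvexManuals}) together with the manual-based characterization of unishifts.

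First I would translate the hypothesis on $X$ into the language of manuals. For the avo case this is just the definition: the avomanual $\C$ of $X$ handles all singleton convex extensions. For the uni case, I would invoke the preceding characterization lemma on the family of convex sets; this requires checking that convex sets are shift-invariant, monotone up under countable increasing unions, and have a basis of finite sets at the origin. The first two are immediate, and the last one follows from approximating a convex $C$ by the finite convex sets $C \cap [-n,n]^d$: if $C \cup \{\vec 0\}$ is convex then so is $(C \cup \{\vec 0\}) \cap [-n,n]^d = (C \cap [-n,n]^d) \cup \{\vec 0\}$ (using that $\vec 0 \in [-n,n]^d$). Hence the unimanual of $X$ handles all singleton convex extensions.

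Next I would apply Lemma~\ref{lem:ConvexManuals} to the manual $\C$, with the convex set $N$, to conclude that the $N$-blocking $\C'$ of $\C$ still handles all singleton convex extensions. Then the preceding lemma about blockings of manuals says that the avomanual (resp.\ unimanual) $\D$ of the $N$-blocking $Y$ of $X$ satisfies $\D \supset \C'$. Since enlarging a manual can only increase the class of extensions it handles, $\D$ handles all singleton convex extensions.

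Finally I would conclude: for the avo case this is literally the definition, so $Y$ is avo for convex sets. For the uni case I would invoke the characterization lemma once more in the reverse direction (again using that convex sets form a shift-invariant, monotone up family with a basis of finite sets at the origin) to conclude that $Y$ is uni for convex sets. There is no real obstacle here beyond this bookkeeping; the only point requiring any verification is the basis property used in both directions of the uni characterization, which is the step I highlighted above.
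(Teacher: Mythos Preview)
Your proposal is correct and follows essentially the same route as the paper: translate the hypothesis into ``$\C$ handles all singleton convex extensions'', apply Lemma~\ref{lem:ConvexManuals} to the $N$-blocking of $\C$, then use the preceding lemma to conclude that the avomanual (resp.\ unimanual) of $Y$ contains this blocking and hence also handles all singleton convex extensions. The paper's proof is terser and does not spell out the verification that convex sets satisfy the hypotheses of the characterization lemma for the uni case; your check that $C \cap [-n,n]^d$ gives a basis of finite convex sets at the origin is a welcome addition.
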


\begin{proof}
Let $\C$ be the avomanual (resp.\ unimanual) of $X$. Since $X$ is avo (resp.\ uni) the assumption is that $\C$ handles all singleton convex extensions. By Lemma~\ref{lem:ConvexManuals}, so do all its convex blockings. By the previous lemma, if $Y$ is the $N$-blocking of $X$, then its avomanual (resp.\ unimanual) $\D$ contains the $N$-blocking $\mathcal{E}$ of $\C$. If $\mathcal{E}$ handles all singleton convex extensions, $\D$ does as well.
\end{proof}

\begin{lemma}
\label{lem:AvomanualPermissive}
The avomanual (resp.\ unimanual) of any subshift is down, monotone up and permissive.
\end{lemma}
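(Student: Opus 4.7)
The plan is to verify each of the three properties separately. The first two are short applications of Lemma~\ref{lem:FollowMonotone} (follower sets shrink as the domain grows) and Lemma~\ref{lem:FollowCompact} (they stabilize on some finite subdomain), while permissivity is the substantial case and requires a shift-invariance/commutation argument. In each case, once the relevant follower-set equality is established for the avomanual, the unimanual version is immediate, because the cardinality condition on $\follow_X(x|C, \vec 0)$ is inherited from the rule $(C, D)$ used to witness the property.

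For \emph{down}: if $(C, D) \in \C$ and $C \subset D' \subset D$, Lemma~\ref{lem:FollowMonotone} gives $\follow_X(x|D, \vec 0) \subset \follow_X(x|D', \vec 0) \subset \follow_X(x|C, \vec 0)$ for every $x \in X$, and the defining equality on the outer terms forces $\follow_X(x|D', \vec 0) = \follow_X(x|C, \vec 0)$. For \emph{monotone up} along an increasing chain $D_i$ with $D = \bigcup_i D_i$, fix $x \in X$ and use Lemma~\ref{lem:FollowCompact} to pick a finite $D'' \Subset D$ with $\follow_X(x|D, \vec 0) = \follow_X(x|D'', \vec 0)$; since $D'' \subset D_i$ for some $i$, a sandwich identical to the previous case gives $\follow_X(x|D, \vec 0) = \follow_X(x|D_i, \vec 0) = \follow_X(x|C, \vec 0)$.

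For \emph{permissivity}, suppose $(C, D), (C', D') \in \C$ with $\vec v + C' \subset D$ and $D \cup \{\vec 0\} \subset \vec v + D'$. To show $(C, D \cup \{\vec v\}) \in \C$ we verify $\follow_X(x|D \cup \{\vec v\}, \vec 0) = \follow_X(x|C, \vec 0)$ for all $x \in X$; the $\subset$ direction is routine. For $\supset$, fix $q \in \follow_X(x|C, \vec 0) = \follow_X(x|D, \vec 0)$ and pick $y \in X$ with $y|D = x|D$ and $y(\vec 0) = q$. Shift-invariance of $X$ combined with $(C', D') \in \C$ implies that for every $z \in X$ we have $\follow_X(z|(\vec v + D'), \vec v) = \follow_X(z|(\vec v + C'), \vec v)$. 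Now $\vec v + C' \subset D$ gives $y|(\vec v + C') = x|(\vec v + C')$, so
\[ x(\vec v) \in \follow_X(x|(\vec v + C'), \vec v) = \follow_X(y|(\vec v + C'), \vec v) = \follow_X(y|(\vec v + D'), \vec v). \]
Hence there is $z \in X$ with $z|(\vec v + D') = y|(\vec v + D')$ and $z(\vec v) = x(\vec v)$, and the inclusion $D \cup \{\vec 0\} \subset \vec v + D'$ yields $z|D = y|D = x|D$ and $z(\vec 0) = y(\vec 0) = q$, so $z$ witnesses $q \in \follow_X(x|D \cup \{\vec v\}, \vec 0)$.

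The main obstacle is conceptual rather than computational: one must identify the right commutation of extensions for permissivity. The argument above is the formal translation of the informal picture in the paper, where applying the rule $(C', D')$ at $\vec v$ and applying $(C, D)$ at $\vec 0$ commute at the level of follower sets precisely when $\vec v + C' \subset D$ and $D \cup \{\vec 0\} \subset \vec v + D'$.
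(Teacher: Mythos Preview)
Your proof is correct and follows essentially the same approach as the paper: the \emph{down} and \emph{monotone up} cases are handled identically via Lemmas~\ref{lem:FollowMonotone} and~\ref{lem:FollowCompact}, and your permissivity argument is the direct version of the paper's proof by contradiction, using the same commutation of the two rule applications (the paper fixes $x|D$, assumes a bad pair $(a,b)$, and derives a contradiction by building exactly your configuration $z$). The only cosmetic difference is the direct-vs.-contrapositive framing.
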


\begin{proof}
We show these for the avomanual, the proofs for the unimanual being similar. Let $\C$ be the avomanual of a subshift $X$.

\textbf{Down.} Let $(C, D) \in \C$, so that for all $x \in X$, $\follow_X(x|D, \vec 0) = \follow_X(x|C, \vec 0)$. If $C \subset D' \subset D$, then by Lemma~\ref{lem:FollowMonotone}
\[ \follow_X(x|D, \vec 0) \subset \follow_X(x|D', \vec 0) \subset \follow_X(x|C, \vec 0), \]
so $(C, D') \in \C$ as well.

\textbf{Monotonicity up.} Let $(C, D_i) \in \C$ and $D_i \subset D_{i+1}$ for all $i$. Then for all $x \in X$, $\follow_X(x|D_i, \vec 0) = \follow_X(x|C, \vec 0)$. Let $D = \bigcup_i D_i$, and observe that $\follow_X(x|D, \vec 0) \subset \follow_X(x|C, \vec 0)$ by Lemma~\ref{lem:FollowMonotone}. By Lemma~\ref{lem:FollowCompact}, $\follow_X(x|D, \vec 0) = \follow_X(x|E, \vec 0)$ for $E \Subset D$ large enough, in particular $E \subset D_i$ for some $i$, so
\[ \follow_X(x|D, \vec 0) = \follow_X(x|E, \vec 0) \supset \follow_X(x|D_i, \vec 0) = \follow_X(x|C, \vec 0). \]

\textbf{Permissivity.} Suppose $(C, D), (C', D') \in \C$,  $\vec v + C' \subset D$ and $D \cup \{\vec 0\} \subset \vec v + D'$. We need to show $(C, D \cup \{\vec v\}) \in \C$. Suppose not, so that there $x \in X|D$ such that
\[ b \in \follow_X(x, \vec 0) \setminus \follow_X(x \sqcup (\vec v \mapsto a), \vec 0) \]
for some $a, b \in A$ such that $y = x \sqcup (\vec v \mapsto a) \sqcup X$. We have also $z = x \sqcup (\vec 0 \mapsto b) \sqcup X$, since $b \in \follow_X(x, \vec 0)$.

Since
\[ \vec v + C' \subset D \subset D \cup \{\vec 0\} \subset \vec v + D', \]
we have $\follow_X(z, \vec v) = \follow_X(x, \vec v)$. In particular, $a \in \follow_X(z, \vec v)$ so $z \sqcup (\vec v \mapsto a) \sqcup X$. But
\[ z \sqcup (\vec v \mapsto a) = y \sqcup (\vec 0 \mapsto b) \]
so $b \in \follow_X(y, \vec 0)$, a contradiction.
\end{proof}

\section{Rotating and blocking an avoshift}

\begin{definition}
If $M \in \GL(d, \Z)$ and $x \in A^{\Z^d}$, define $y = Mx$ by $y_{\vec v} = x_{M^{-1}\vec v}$. If $X \subset A^{\Z^d}$, write $MX \subset A^{\Z^d}$ for $\{Mx \;|\; x \in X\}$.
\end{definition}

\begin{lemma}
\label{lem:MatrixAvo}
If $M \in \GL(d, \Z)$ and $X \subset A^{\Z^d}$, then $Y = MX$ is a subshift. The avomanual of $Y$ consists of pairs $(MC, MD)$ such that $(C, D)$ is in the avomanual of $X$.
\end{lemma}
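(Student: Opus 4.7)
The plan is to first check that $Y=MX$ is indeed a subshift, and then reduce the claim about avomanuals to the single identity $\follow_Y(Mx|MD,\vec 0) = \follow_X(x|D,\vec 0)$, which in turn comes from the fact that $\vec v \mapsto M\vec v$ is a bijection on $\Z^d$ fixing $\vec 0$.

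First I would verify that the map $x \mapsto Mx$ is a homeomorphism of $A^{\Z^d}$ (continuity is clear since each coordinate of $Mx$ depends on a single coordinate of $x$, and the inverse is $M^{-1}$). For shift-invariance, I would compute that $\sigma_{\vec u}(Mx)_{\vec v} = x_{M^{-1}\vec v + M^{-1}\vec u} = M(\sigma_{M^{-1}\vec u}(x))_{\vec v}$, so $\sigma_{\vec u}(Mx) = M\sigma_{M^{-1}\vec u}(x) \in MX$ whenever $x \in X$ and $X$ is shift-invariant. Combined with closedness (the continuous image of a compact set under a homeomorphism is closed), this yields that $Y$ is a subshift.

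Next I would establish the follower-set identity. Given $E \subset \Z^d$ and $x \in X$, set $y = Mx$. Unpacking the definitions, $(y|ME)_{M\vec v} = x_{M^{-1}(M\vec v)} = x_{\vec v}$ for all $\vec v \in E$, so the $ME$-pattern $y|ME$ corresponds bijectively to the $E$-pattern $x|E$ under the relabeling $\vec v \leftrightarrow M\vec v$. Moreover $M\vec 0 = \vec 0$, so $\vec 0 \notin MC$ iff $\vec 0 \notin C$, i.e.\ the ``rule shape'' is preserved. A symbol $b$ lies in $\follow_Y(Mx|ME, \vec 0)$ iff there exists $y' = Mx' \in Y$ with $y'|ME = Mx|ME$ and $y'_{\vec 0} = b$; translating back through $M^{-1}$, this happens iff there is $x' \in X$ with $x'|E = x|E$ and $x'_{\vec 0} = b$, i.e.\ $b \in \follow_X(x|E, \vec 0)$. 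Therefore
\[ \follow_Y(Mx|ME, \vec 0) = \follow_X(x|E, \vec 0) \quad \text{for every } x \in X, \; E \subset \Z^d. \]

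Finally, I would apply this identity with $E = C$ and $E = D$ and use that $x \mapsto Mx$ is a bijection $X \to Y$: the rule $(MC,MD)$ is in the avomanual of $Y$ iff
\[ \follow_Y(y|MD,\vec 0) = \follow_Y(y|MC,\vec 0) \text{ for all } y \in Y, \]
which by the identity above and the bijection is equivalent to $\follow_X(x|D,\vec 0) = \follow_X(x|C,\vec 0)$ for all $x \in X$, i.e.\ $(C,D)$ being in the avomanual of $X$. Since $M$ is a bijection on $\Z^d$, every pair of subsets of $\Z^d$ is of the form $(MC,MD)$ for unique $(C,D)$, so this characterizes the avomanual of $Y$ exactly as claimed. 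There is no genuine obstacle here; the only thing to be careful about is the bookkeeping of indices $\vec v$ versus $M\vec v$ and the fact that $M\vec 0 = \vec 0$, which is what allows the origin to play the same role on both sides.
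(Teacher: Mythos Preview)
Your proof is correct and follows essentially the same approach as the paper: verify shift-invariance of $MX$ by the computation $\sigma_{\vec u}(Mx)=M\sigma_{M^{-1}\vec u}(x)$, and then transfer the avomanual through the bijection $\vec v\mapsto M\vec v$ using $M\vec 0=\vec 0$. The only cosmetic difference is that the paper invokes the group action $X\mapsto MX$ to reduce to a single containment (if $(C,D)$ is in the avomanual of $X$ then $(MC,MD)$ is in that of $Y$), whereas you prove the two-sided follower identity $\follow_Y(Mx|ME,\vec 0)=\follow_X(x|E,\vec 0)$ directly and read off both containments at once; this is arguably cleaner and certainly equivalent.
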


\begin{proof}
Obviously $Y$ is closed. For shift-invariance, we have
\begin{align*}
\sigma_{\vec v}(Mx)_{\vec u} &= Mx_{\vec v + \vec u} \\
&= x_{M^{-1}(\vec v + \vec u)} \\
&= x_{M^{-1}(\vec v) + M^{-1}(\vec u)} \\
&= \sigma_{M^{-1}(\vec v)}(x)_{M^{-1}(\vec u)} \\
&= M(\sigma_{M^{-1}(\vec v)}(x)){\vec u}.
\end{align*}

For the second claim, it is easy to check that $X \mapsto MX$ is a group action of $\GL(d,\Z)$ on subshifts. Thus, it suffices to show that $(MC, MD)$ is in the avomanual of $Y$ whenever $(C, D)$ is in the avomanual of $X$. For this, we need to show that if $y \in Y$, then $\follow_X(y|D', \vec 0) = \follow_X(y|C', \vec 0)$. Say $y = Mx$. Since $y_{M \vec v} = x_{\vec v}$, $y|MD$ and $y|MC$ contain respectively the contents of $x|D$ and $x|C$. The latter determine the same followers at $\vec 0$ for $x|C, x|D$, and thus we have the same followers at $M \vec 0 = \vec 0$ for $y|MC, y|MD$.
\end{proof}

\section{Nondeterministic spacetimes}
\label{sec:Nondeterministic}

\begin{definition}
Let $Z \subset A^{\Z^{d-1}}$ be a nonempty subshift. Let $N \Subset \Z^{d-1}$ and $F : A^N \to \pow(A)$ satisfies $F(p) \neq \emptyset$ for all $p \in A^N$. Suppose that the relation
\[ R = \{ (z, z') \;|\; z \in Z, z' \in A^{\Z^{d-1}}, \forall \vec v \in \Z^{d-1}: z'_{\vec v} \in F(z|{\vec v + N}) \}. \]
is contained in $Z \times Z$. Then $R$ is the \emph{nondeterministic cellular automaton} on $Z$ defined by $F$. The \emph{nondeterministic spacetime} or \emph{NS} of $R$ is 
\[ \{x \in A^{\Z^d} \;|\; \mbox{for all } i \in \Z: S_i(x) \in Z \mbox{ and } (S_i(x), S_{i+1}(x)) \in R \}. \]
We say a subshift is a \emph{binondeterministic spacetime} or \emph{BS} if it is a nondeterministic spacetime, and its image under $M$ is also a nondeterministic spacetime, where $M$ reflects the $d$-dimensional basis vector. In dimension $1$, we take binondeterministic spacetime to mean a vertex shift (i.e.\ an SFT with window $\{0,1\}$).
\end{definition}

Let us make two preliminary remarks about this definition. First, if $F : A^N \to \pow(A)$ always gives a singleton set, then the NS subshift is the usual spacetime subshift defined e.g.\ in \cite{Sa14}, whose configurations are the spacetime diagrams of the cellular automaton. We drop the word ``subshift'' for short, following \cite{CyFrKr19}. Under the same restriction on both $F$ and the local rule for the inverted direction, BS subshifts correspond to spacetime diagrams of reversible cellular automata (shift-commuting self-homeomorphisms of the subshift).

Second, in general, it is clear that if $X \subset A^{\Z^d}$ is BS, then $X|\Z^{d-1} \times \{0\} = Z$ (identifying $\Z^{d-1} \times \{0\}$ with $\Z^d$ in the obvious way): Any $z \in Z$ can be positioned in $\Z^{d-1} \times \{0\}$ and then extended to a point of $X$ by using the local rules arbitrarily to determine the contents of slices above and below $z$. Because of the assumption $R \subset Z \times Z$, indeed inductively on $|i|$ we see that all slices $\Z^{d-1} \times \{i\}$ contain points of $Z$ (again up to identifying $\Z^{d-1} \times \{i\}$ with $\Z^{d-1}$), therefore the resulting point is indeed in $X$.

\begin{definition}
We say a $\Z^d$-subshift is a \emph{uniform nondetermistic spacetime} if it is a nondeterministic spacetime, and we can pick the rule $F : A^N \to \pow(A)$ so that all $F(p)$ have the same cardinality. A \emph{uniform binondetermistic spacetime} is a binondeterministic spacetime where this holds in both directions. In dimension $1$, we take a binondeterministic spacetime to mean a vertex shift where each node has the same number of valid successors, and the same number of valid predecessors.
\end{definition}

\begin{lemma}
\label{lem:Entropy}
The entropy of a uniform nondeterministic spacetime is $\log n$ where $n$ is the cardinality of any of the images $F(p)$.
\end{lemma}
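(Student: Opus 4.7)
The plan is to compute $h(X) = \log n$ directly along the F\o{}lner sequence of boxes $F_i = [-K_i, K_i]^{d-1} \times [0, k_i-1]$ with $K_i, k_i \to \infty$, by obtaining matching upper and lower bounds; here I use the uniform rule $F$ supplied by the definition, and fix $r$ with $N \subset [-r, r]^{d-1}$. Note that $|F_i| = (2K_i+1)^{d-1} k_i$, and these boxes do form a F\o{}lner sequence in $\Z^d$.

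For the upper bound on $\#X|F_i$, I read a pattern $p \in X|F_i$ slice by slice. The bottom slice $p|(F_i \cap S_0) \in Z|[-K_i, K_i]^{d-1}$ has at most $|A|^{(2K_i+1)^{d-1}}$ values. Given slices $0, \ldots, j-1$ restricted to $F_i$, each of the $(2(K_i-r)+1)^{d-1}$ \emph{interior} cells $v \in F_i \cap S_j$ with $\pi(v)+N \subset [-K_i, K_i]^{d-1}$ satisfies $p_v \in F(p|\pi(v)+N)$, giving at most $n$ choices, while each of the $O(K_i^{d-2})$ remaining \emph{boundary} cells admits the trivial bound $|A|$. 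Dividing the resulting product by $|F_i|$ and letting $i \to \infty$, the slice-$0$ contribution vanishes by the factor $1/k_i$, the per-slice boundary shell contributes $O(K_i^{-1})\log|A|$ and also vanishes, and the main term is $(1+o(1))\log n$, giving $h(X) \leq \log n$.

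For the lower bound, fix any $x^* \in X$ and build configurations $y \in X$ by keeping $S_j(y) = S_j(x^*)$ for $j \leq 0$ and, for each $j \geq 1$ and each cell $v \in \Z^{d-1}$ independently, choosing $S_j(y)_v$ from $F(S_{j-1}(y)|v+N)$, a set of exactly $n$ elements. Each $S_j(y)$ then lies in $Z$ because $R \subset Z \times Z$, so $y \in X$. Because I fixed the \emph{entire} slice $S_0(y)$ rather than just its restriction to a box, every cell at every height $j \geq 1$ has its full $N$-neighborhood known, and hence exactly $n$ valid choices. Varying these choices independently at the $(k_i-1)(2K_i+1)^{d-1}$ cells of $F_i$ at heights $\geq 1$ produces $n^{(k_i-1)(2K_i+1)^{d-1}}$ distinct patterns in $X|F_i$, so $\liminf_i \log \#X|F_i / |F_i| \geq \tfrac{k_i-1}{k_i}\log n \to \log n$.

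The main thing requiring care is the boundary bookkeeping in the upper bound: cells at slice $j$ whose $N$-neighborhood leaves $[-K_i, K_i]^{d-1}$ cannot be constrained directly by the local rule and must be assigned the crude $|A|$-bound, but the shell has only $O(K_i^{d-2})$ cells per slice against the $(2K_i+1)^{d-1}$ bulk, so the excess contribution vanishes as $K_i \to \infty$. No analogous difficulty arises in the lower bound because fixing the entire slice $S_0(y)$ makes every cell's neighborhood automatically known.
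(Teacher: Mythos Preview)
Your proof is correct and follows the same core idea as the paper's: compute the entropy along a F\o{}lner sequence by counting the $n$ choices per cell supplied by the uniform local rule $F$. The only real difference is the choice of F\o{}lner sequence. The paper uses \emph{spacetime cones}
\[
F_k = \{(\vec u, i) \;|\; i \in [0, k],\; \vec u \in [-r(k - i), r(k - i)]^{d-1} \},
\]
which are shaped so that every cell above the base has its full $N$-neighborhood inside the cone; this removes the boundary issue entirely, and the only ``error term'' is the base slice. You instead use rectangular boxes and absorb the boundary discrepancy by an explicit $O(K_i^{d-2})$ shell estimate per slice in the upper bound, and by fixing an entire slice $S_0(y)=S_0(x^*)$ (rather than a finite piece) in the lower bound. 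Both devices are perfectly valid; the paper's cones make the counting shorter at the cost of a less standard F\o{}lner sequence, while your boxes are more familiar but require the extra bookkeeping you already carried out.

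One small omission: you invoke $x^*\in X$ without verifying that $X$ is nonempty. The paper handles this explicitly (place some $z\in Z$ at height $-k$, apply $F$ up to height $k$, and extract a limit point). You should add a sentence to this effect, since the definition of a nondeterministic spacetime does not make nonemptiness of $X$ automatic, only of $Z$.
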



\begin{proof}
Let $Z \subset A^{\Z^{d-1}}$, $N \Subset \Z^{d-1}$ and $F : A^N \to \pow(A)$ be as in the definition. First we observe that the spacetime is nonempty: pick any $z \in Z$ and apply $F$ arbitrarily to obtain a sequence $z_{-k} = z, z_{-k+1}, \ldots, z_k$ such that $(z_i, z_{i+1}) \in R$ for all $i$. We can see these as a pattern on $\Z^{d-1} \times [-k, k]$ and any limit point is in the INS.

Now suppose $N \subset B_r$. Let 
\[ F_k = \{(\vec u, i) \;|\; i \in [0, k], \vec u \in [-r(k - i), r(k - i)]^{d-1} \} \]
this is a ``spacetime cone'' for $F$ in the sense that if we know the bottom pattern in $[-rk,rk]^{d-1} \times \{0\}$, the possible contents for the rest of the pattern are given by the rule $F$. Clearly for this F\o{}lner sequence we have $\log(\# X|F_n)/\#F_n \rightarrow \log n$: We first choose the bottom pattern, noting that there is at least one choice by the previous paragraph, and there are too few choices to affect the entropy. Then, we give the choices made by $F$, and we have roughly $n^{F_k}$ of them (since $F_k$ are a F\o{}lner sequence and the bottom slice indeed takes up only a small proportion).
\end{proof}

It is clear from Lemma~\ref{lem:Entropy} that for a uniform binondeterministic spacetime, the constant cardinality of images $F(p)$ is the same for both directions

\begin{definition}
We say a $\Z^d$-subshift is an \emph{inductive binondeterministic spacetime} if it is a binondeterministic spacetime, and if $d \geq 2$, then its $\Z^{d-1}$-trace is an inductive binondeterministic spacetime. We similarly define \emph{inductive uniform binondeterministic spacetimes} or \emph{IUBS}.
\end{definition}

\begin{lemma}
\label{lem:IBSPeriodic}
Every INS (thus every IBS) has a totally periodic point.
\end{lemma}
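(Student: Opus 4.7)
The plan is to prove the lemma by induction on the dimension $d$. For the base case $d = 1$, an INS is the nondeterministic spacetime of an NCA over a $0$-dimensional base subshift, which reduces to a directed graph on the finite vertex set $Z \subset A$ with edges $a \to b$ whenever $b \in F(a)$; since $F$-values are nonempty and $R \subset Z \times Z$, every vertex has positive out-degree, so following edges forward from any start must eventually revisit a vertex, producing a cycle and hence a totally periodic point of $X$.

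For the inductive step ($d \geq 2$), I would let $X \subset A^{\Z^d}$ be an INS with base subshift $Z \subset A^{\Z^{d-1}}$, local rule $F : A^N \to \pow(A)$, and transition relation $R \subset Z \times Z$. By the recursive nature of the definition, the $\Z^{d-1}$-trace $Z'$ of $X$ is itself an INS of one lower dimension, so by the induction hypothesis it contains a totally periodic point $z^{(0)}$, with period lattice $L \subset \Z^{d-1}$ of finite index. Since $Z' \subset Z$, this $z^{(0)}$ lies in $Z$ and is a valid starting slice for iterating the NS dynamics upward.

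The heart of the argument is the claim that whenever $w \in Z$ is $L$-periodic, there is an $L$-periodic $w' \in Z$ with $(w, w') \in R$. To prove it I would fix a set $V$ of coset representatives of $\Z^{d-1}/L$, pick arbitrarily an $a_v \in F(w|v + N)$ for each $v \in V$ (nonempty by the definition of NCA), and extend $L$-periodically by $w'_{v + \ell} := a_v$ for $v \in V$, $\ell \in L$. Validity of $(w, w') \in R$ then follows because $L$-periodicity of $w$ makes $F(w|\vec u + N)$ depend only on the coset of $\vec u$ modulo $L$, so the same choice works simultaneously at all shift-equivalent positions; membership $w' \in Z$ is automatic from $R \subset Z \times Z$. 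This periodicity-preservation step is the main obstacle I anticipate, but it reduces immediately to the shift-invariance of $F$ together with the nonemptiness of its values.

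Iterating the claim from $z^{(0)}$ produces a sequence $z^{(0)}, z^{(1)}, z^{(2)}, \ldots$ of $L$-periodic elements of $Z$ with all consecutive pairs in $R$. Since the set of $L$-periodic configurations in $A^{\Z^{d-1}}$ is finite (at most $|A|^{[\Z^{d-1} : L]}$ elements), pigeonhole yields indices $0 \leq i < j$ with $z^{(i)} = z^{(j)}$. Unrolling the cycle $z^{(i)}, z^{(i+1)}, \ldots, z^{(j-1)}, z^{(i)}, \ldots$ bi-infinitely gives a configuration $x \in A^{\Z^d}$ defined by $S_k(x) = z^{(i + ((k - i) \bmod (j - i)))}$ for all $k \in \Z$, which lies in $X$ by construction and is invariant under the finite-index sublattice of $\Z^d$ generated by $L \times \{0\}$ and $(j - i) e_d$, hence is totally periodic.
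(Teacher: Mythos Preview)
Your proof is correct and follows essentially the same approach as the paper: induct on dimension, obtain a totally periodic point in the trace, push it forward by the nondeterministic rule while preserving the period lattice, and use pigeonhole on the finitely many $L$-periodic configurations to close up a cycle and extend bi-infinitely. The one cosmetic difference is that the paper compresses your coset-by-coset choice into a single step by picking a \emph{section} $f : A^N \to A$ of $F$ (any function with $f(p) \in F(p)$); since $f$ is then an honest deterministic cellular automaton, it commutes with all shifts and hence automatically maps $L$-periodic points to $L$-periodic points, which bypasses your explicit verification of periodicity preservation.
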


\begin{proof}
This is clear in dimension $1$, where INS just means one-step SFT. We proceed by induction. The $\Z^{d-1}$-trace of an INS has a periodic point $x$. Now pick any section of $F : A^N \to \pow(A)$, i.e.\ a map $f : A^N \to A$ such that $f(p) \in F(p)$ for all $p$. Then $f$ is a cellular automaton, and the one-sided spacetime diagram starting $x$ gives an infinite sequence of periodic points with the same period. From this, we can extract a periodic point in the $d$-dimensional subshift.
\end{proof}

\begin{lemma}
\label{lem:IUBSPeriodic}
Every IUBS has dense totally periodic points.
\end{lemma}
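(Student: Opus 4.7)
The plan is induction on $d$, using density in the trace to produce an $L$-periodic slice, and then closing a path in an auxiliary finite graph into a cycle via Lemma~\ref{lem:UnionOfCycles}.

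\textbf{Base case $d = 1$.} An IUBS in dimension $1$ is a vertex shift in which every vertex has in-degree $n$ and out-degree $n$, so by Lemma~\ref{lem:UnionOfCycles} the graph is an edge-disjoint union of cycles. A short counting argument -- a source strongly connected component $S$ would have all its in-edges internal, and the sum of in-degrees over $S$ (namely $n|S|$) equals the sum of internal out-degrees, forcing no edges to leave $S$ and hence $S$ to be isolated -- shows every weakly connected component is strongly connected. Given any finite walk $v_0 \cdots v_n$, its endpoints lie in a common component, so any path from $v_n$ back to $v_0$ concatenated with the walk gives a closed walk, i.e.\ a totally periodic point extending the walk.

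\textbf{Inductive step $d \geq 2$.} Let $X$ be a $d$-dimensional IUBS with trace $Z$, which by induction has dense totally periodic points. Given $p \in X|D$ for $D = [-N, N]^{d-1} \times [-k, k]$, extend $p$ to $p' \in X|D'$ on a wider slab $D' = [-R, R]^{d-1} \times [-k, k]$, with $R \gg N + 2kr$, where $r$ bounds the radii of the forward and backward NDCA neighborhoods. Apply the inductive density hypothesis to the bottom slice $p'_{-k}$ to obtain a totally periodic $z^*_{-k} \in Z$ extending it, with lattice $L \subset \Z^{d-1}$. Iteratively define $L$-periodic $z^*_{-k+1}, \ldots, z^*_k \in Z$ by applying the forward NDCA $F$ with $L$-periodic choices reproducing $p'$ on the shrinking windows $[-R + (i+k)r, R - (i+k)r]^{d-1}$; since the $N$-neighborhood of a coordinate at step $i+1$ lies inside the window preserved at step $i$, the choices are valid, and each $z^*_i$ agrees with $p$ on $[-N, N]^{d-1} \times \{i\}$. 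Now form the finite graph $G_L$ whose vertices are the $L$-periodic configurations of $Z$ and whose edges are pairs $(z, z')$ with $z' \in F(z)$; by uniformity of $F$ and $F^-$ (with common image size $n$ by Lemma~\ref{lem:Entropy}), every vertex of $G_L$ has in-degree and out-degree $n^{|\Z^{d-1}/L|}$, and the same SCC argument as the base case makes each weakly connected component strongly connected. Our slices form a path $z^*_{-k} \to \cdots \to z^*_k$ in $G_L$; extending it by any path from $z^*_k$ back to $z^*_{-k}$ closes it into a cycle, and this cycle describes a totally periodic point of $X$ (spatially $L$-periodic, temporally of period the cycle length) that restricts to $p$ on $D$.

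\textbf{The main obstacle.} The delicate point is the $L$-periodic propagation: the chosen NDCA value at each coordinate $\vec v$ of the kept window must equal $p'|\vec v$, yet also depend only on the coset of $\vec v$ modulo $L$. This is possible only if any two points of the window that are $L$-congruent already carry equal values in $p'$. When $L$ has small nonzero vectors this can fail. The clean fix is to strengthen the induction to ``density with arbitrarily sparse period lattice,'' so that $L \cap B_{2R'} = \{0\}$ for the relevant window width $R'$ and the consistency becomes vacuous; in dimension $1$ this strengthening amounts to lengthening the closing cycle by inserting extra side loops, which exist because of the uniform out-degree.
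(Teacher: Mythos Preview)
Your proof is correct and follows essentially the same route as the paper: induction on $d$, lift a periodic slice from the trace, pass to the finite graph of $L$-periodic configurations, and close the resulting path into a cycle using the equal in/out degree structure. The paper packages the last step more cleanly by observing that the vertical dynamics restricted to $L$-periodic points is itself (conjugate to) a one-dimensional IUBS, and then simply invokes the base case, rather than repeating the strong-connectivity counting argument.

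One remark on your ``main obstacle'': the strengthening of the induction hypothesis to arbitrarily sparse lattices is unnecessary. A point totally periodic under a lattice $L$ is automatically periodic under every sublattice $mL$, so once periodic points are dense you already have periodic points with period lattice as sparse as you like. The paper simply writes ``where $U$ is very sparse'' for this reason. Your proposed fix via inserting extra side loops in dimension one works, but replacing $L$ by $mL$ for large $m$ is both simpler and what is actually being used.
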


\begin{proof}
Let $X$ be our subshift. Let $F, F'$ be the nondeterministic CA rules in the two directions.

In dimension $1$, UIBS means a vertex shift where each node has the same number of successors and predecessors, say $k$. By Lemma~\ref{lem:UnionOfCycles}, as a graph this shift is a union of edge-disjoint cycles, from which it is clear that periodic points are dense.

We proceed by induction on dimension. Consider any pattern $p$, which we may extend to a hypercubic pattern. Let $q$ be the bottom slice of $p$ (i.e.\ the elements with minimal $d$-coordinate). We may assume its support is in $\Z^{d-1} \times \{0\}$. Now the nondeterministic CA rule $F$ can be used to build $p$ from some $(\Z^{d-1} \times\{0\})$-extension of $q$. Let $Y$ be the trace in dimension $d-1$. By induction, $q$ appears at the origin in a totally periodic point with a period lattice $U$ which is of finite index in $\Z^{d-1}$, and where $U$ is very sparse. We may think of $U$ as also a subgroup of $\Z^d$ by appending $0$ in the vectors.

Now consider the subshift $Z$ of $X$ consisting of points that are $U$-periodic. These points are obtained precisely by taking $U$-periodic points in $Y$, and then applying specializations of $F$ and $F'$ which apply the same realization at $\vec v$ and $\vec v'$ whenever $\vec v \equiv \vec v' \bmod U$. If $U$ is sparse enough, then $p$ appears in $Z$.

Clearly the vertical dynamics of $Z$ (i.e.\ $Z$ under the $\Z$ action by $\sigma_{e_d}$) is conjugate to a one-dimensional IUBS, therefore periodic points are dense in it from the case of dimension one. We concude that $p$ appears in a periodic point of $X$ as desired.
\end{proof}

\begin{lemma}
\label{lem:UNSFactor}
Every UNS has an equal entropy full shift factor.
\end{lemma}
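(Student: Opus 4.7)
The plan is to realize the UNS $X$ as a full-shift factor by encoding, at every cell, the nondeterministic choice taken from the output of $F$. Because $X$ is \emph{uniform}, the image $F(p)$ has the same cardinality $n$ at every $p \in A^N$, and by Lemma~\ref{lem:Entropy} the topological entropy of $X$ equals $\log n$. So if I can produce a factor map from $X$ onto the $n$-letter full shift $B^{\Z^d}$ with $B = \{0, 1, \ldots, n-1\}$, the equal-entropy conclusion is automatic.

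For each $p \in A^N$, fix an arbitrary bijection $\beta_p : F(p) \to B$, and define $\phi : X \to B^{\Z^d}$ by
\[
\phi(x)_{(\vec v, i)} \;=\; \beta_{\,S_{i-1}(x)|(\vec v + N)}\bigl(x_{(\vec v, i)}\bigr).
\]
This is well-defined because $x \in X$ forces $x_{(\vec v, i)} \in F(S_{i-1}(x)|(\vec v + N))$ at every cell; it is continuous since each output coordinate depends on a finite window of $x$; and it is shift-equivariant because the defining formula is translation-invariant in $\Z^d$. These parts are immediate.

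The real content is surjectivity. Fix any reference point $x_0 \in X$ (which exists, as argued at the start of the proof of Lemma~\ref{lem:Entropy}). Given a target $y \in B^{\Z^d}$ and an integer $m \geq 0$, I would build an approximation $x^{(m)} \in X$ as follows: set $x^{(m)}$ equal to $x_0$ at all cells of height $\leq -m$, and then define cells of height $\geq -m+1$ recursively by
\[
x^{(m)}_{(\vec v, i+1)} \;=\; \beta^{-1}_{\,S_i(x^{(m)})|(\vec v + N)}\bigl(y_{(\vec v, i+1)}\bigr).
\]
The hypothesis $R \subseteq Z \times Z$ then propagates membership in $Z$ to every slice of height $\geq -m+1$ and consecutive slices are $R$-related (by construction upward, and because $x_0 \in X$ below height $-m$), so $x^{(m)} \in X$; and by construction $\phi(x^{(m)})$ coincides with $y$ at every cell of height $\geq -m+1$. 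Taking a subsequential limit as $m \to \infty$ yields some $x \in X$ with $\phi(x) = y$, and the image full shift has entropy $\log n = h(X)$.

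The only place I anticipate needing care is the interface between the ``frozen'' part (inherited from $x_0$) and the ``built'' part (driven by $y$) in the approximations $x^{(m)}$; this works out simply because the forward transition from slice $-m$ to slice $-m+1$ uses $F$ applied to a slice already lying in $Z$, so no compatibility issue arises. Beyond that, the argument is just the formalisation of the intuition that uniformity of $F$ lets us record the $n$-fold choice made at every cell as an independent coordinate of a full shift.
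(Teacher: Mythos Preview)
Your proof is correct and follows essentially the same approach as the paper: both define $\phi$ by recording, at each cell, which element of $F(p)$ was chosen (via a fixed enumeration/bijection of $F(p)$), and both establish surjectivity by fixing an arbitrary base slice and building upward using $F$ to realise any prescribed sequence of choices. The only cosmetic difference is that you extract the full preimage via a compactness limit of approximants $x^{(m)}$, whereas the paper observes that $\phi(X)$ is a subshift containing every half-space pattern and hence must be the full shift; these are interchangeable endings to the same construction.
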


\begin{proof}
Let $X$ be our subshift. We may suppose $X$ is nonempty. Let $F : A^N \to \pow(A)$ be the rule, where $N \Subset \Z^{d-1}$. We see $N$ also as a subset of $\Z^d$ by appending a zero to the vectors. Suppose $|F(p)| = k$ for all patterns $p$. For each pattern $p \in A^N$, pick an ordering of $F(p)$, say $F(p) = \{a_{p, 1}, \ldots, a_{p, k}\}$. Now, we can map $x \in X$ to the configuration $y \in \Z_k^{\Z^d}$ defined by
\[ y_{\vec v} = i \iff x_{\vec v} = a_{p, i} \mbox{ where } p = -\vec w + x|(\vec w + N) \]
where $\vec w = \vec v - e_d$ (the role of $\vec w$ is just to translate the pattern below the symbol at $\vec v$ to the correct shape $N$ for applying $F$). Let $\phi$ be the map defined by this formula. It is clearly shift-commuting and continuous. Its image is a subshift of $\Z_k^{\Z^d}$. By Lemma~\ref{lem:Entropy}, it suffices to show surjectivity.

For surjectivity, pick a configuration $y \in \Z_k^{\Z^d}$. 
We construct $x \in X$ such that $\phi(x)|\Z^{d-1} \times \N = y|\Z^{d-1} \times \N$ i.e.\ the top halfspace in the image has arbitrary content.
Pick $S_{-1}(x)$ arbitrarily. Applying the local rule we can pick the contents of the slices $S_i(x)$ for increasing $i$ so that the image configuration $y$ has any desired contents in the top halfspace in its $\phi$-image. We conclude that $\phi(X)$ can contain any pattern over $\Z_k$ in the top halfspace. Since the image is a subshift it must be the full shift.
\end{proof}

\section{Convex avoshifts are inductive binondeterministic spacetimes up to a twist}

For a subshift $Y$, write $Y^{[k]}$ for the $(\{\vec 0^{d-1}\} \times \{0, \ldots, k-1\})$-blocking of $Y$.

\begin{lemma}
Let $X \subset A^{\Z^d}$ be an avoshift for inductive intervals. Then there exist $M \in \GL(d,\Z)$ and $k$ such that $Z = (MX)^{[k]}$ is a binondeterministic spacetime.
\end{lemma}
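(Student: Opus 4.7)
My plan is to invoke Theorem~\ref{thm:MainTechnical} twice -- once on $X$ and once on the reflected subshift $(-I)X$ -- to produce two halfspace rules of opposite orientations, then align both with the $e_d$-axis by a single change of basis $M$ and absorb their finite supports into a single vertical block of height $k$.

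First I would verify the hypotheses for both applications. By Lemma~\ref{lem:AvomanualPermissive}, the avomanual $\C_X$ is down, monotone up, and permissive, and by assumption it handles all singleton inductive interval extensions. I would also observe that the class of inductive intervals in $\Z^d$ is invariant under $-I$: the zero-grazing intervals in $\Z$ are closed under negation, and the recursive definition transports this symmetry up; combined with Lemma~\ref{lem:MatrixAvo} this shows $(-I)X$ is again avo for inductive intervals. Applying Theorem~\ref{thm:MainTechnical} to $\C_X$ and $\C_{(-I)X}$ and joining the two slantedness requirements via Remark~\ref{rem:SlantednessJoin}, I fix a positive integer vector $\vec w$ satisfying both slant conditions with $\gcd(\vec w_1, \ldots, \vec w_d) = 1$. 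The first application yields a rule $(C_+, D_+) \in \C_X$ handling in parallel $(H_{\vec w, 0}^\circ, \bar H_{\vec w, 0})$; the second, after pulling back along $-I$ via Lemma~\ref{lem:MatrixAvo} (which preserves handling in parallel, reflecting the handled extension by $-I$), yields a rule $(C_-, D_-) \in \C_X$ handling in parallel $(H_{-\vec w, 0}^\circ, \bar H_{-\vec w, 0})$. Thus $C_+$ is a finite subset of the strict $\vec w$-upper halfspace and $C_-$ a finite subset of the strict $\vec w$-lower halfspace.

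Using $\gcd(\vec w_1, \ldots, \vec w_d) = 1$, I then pick $M \in \GL(d, \Z)$ whose last row is $\vec w^T$. Then $M \bar H_{\vec w, 0} = T_{0,\infty}$ and $M \bar H_{-\vec w, 0} = T_{-\infty, 0}$, and by Lemma~\ref{lem:MatrixAvo} the rules $(MC_+, MD_+)$ and $(MC_-, MD_-)$ lie in the avomanual of $Y := MX$ and handle in parallel $(T_{1,\infty}, T_{0,\infty})$ and $(T_{-\infty,-1}, T_{-\infty, 0})$ respectively. Since $MC_\pm$ are finite, I choose $k \geq 1$ large enough that $MC_+ \subset T_{1, k}$ and $MC_- \subset T_{-k, -1}$, so that each rule's support fits inside a single vertical $k$-window, and set $Z := Y^{[k]}$.

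To conclude $Z$ is a BS I would produce the two nondeterministic cellular automata on the $(d{-}1)$-dimensional trace $\hat Z$. Each blocked symbol at $(\vec v, i)$ encodes the $y$-pattern on $\{(\vec v, j) : i \leq j < i+k\}$. The rule $(MC_-, MD_-)$ yields an upward local rule $F_+$ on $\hat Z$: to produce the row at height $i+1$ from the row at height $i$, copy the last $k{-}1$ entries of each blocked symbol from row $i$ and choose the new top entry $y_{(\vec v, i+k)}$ nondeterministically from $\follow_Y(y|((\vec v, i+k) + MC_-), (\vec v, i+k))$, a set depending only on a finite window of the row-$i$ data (and nonempty whenever the input encodes a valid pattern; for other inputs $F_+$ can be set arbitrarily). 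The rule $(MC_+, MD_+)$ symmetrically gives a downward local rule $F_-$. The forward inclusions $Z \subset \mathrm{NS}(F_+)$ and $M'Z \subset \mathrm{NS}(F_-)$ (with $M'$ reflecting $e_d$) follow immediately from the avomanual equations. The main technical point -- and the step I expect to be the main obstacle -- is the reverse inclusion: a configuration whose slices are in $\hat Z$ and whose consecutive upward pairs are related by $F_+$ must lie in $Z$. By a compactness argument it suffices to show every finite pattern appears in $Y$, and starting from the lowest slice $S_a$ of the pattern (which is globally extendable since $S_a(z) \in \hat Z$), one iterates $F_+$ upward slice-by-slice, using that handling in parallel ensures the independent follower-set choices at distinct points of each new slice are jointly compatible in $Y$. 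The analogous argument for $F_-$ in the reflected subshift then yields $M'Z = \mathrm{NS}(F_-)$, establishing the BS property.
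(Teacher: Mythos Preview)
Your proposal is correct and follows essentially the same route as the paper: invoke Lemma~\ref{lem:AvomanualPermissive} and Theorem~\ref{thm:MainTechnical} to obtain a parallel-handling rule for a sufficiently slanted halfspace extension, use the $-I$-invariance of inductive intervals to get the opposite-direction rule, change basis to align the halfspace with $e_d$, and block vertically to absorb the finite support into one slice. The only cosmetic differences are that you fix both directions upfront (the paper does one and then says ``apply the same discussion to the inversion''), and your matrix convention is the inverse of the paper's (you take $M$ with last row $\vec w^T$ and set $Y=MX$; the paper takes $M$ with columns $\bar e_i$ and sets $Y=M^{-1}X$); these are equivalent. Your explicit construction of $F_\pm$ and the compactness/parallel-handling argument for the reverse inclusion $\mathrm{NS}(F_+)\subset Z$ is more detailed than the paper, which simply asserts the conclusion once the blocked rule has support in a single adjacent slice.
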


\begin{proof}
Since $X$ is an avoshift for inductive intervals, its avomanual $\C$ by definition handles inductive interval extensions. By Lemma~\ref{lem:AvomanualPermissive}, $\C$ is also down, monotone up and permissive. Now, Theorem~\ref{thm:MainTechnical} says that if a manual is down, monotone up, permissive, and handles all inductive interval extensions, then it handles in parallel all sufficiently slanted rational halfspace extensions. Therefore, $\C$ handles in parallel all sufficiently slanted rational halfspace extensions.

Let now $\vec w$ be sufficiently slanted. The conclusion is that the avomanual $\C$ contains $(C, D)$ such that $C \Subset H_{\vec w, 0}^\circ$ and $\bar H_{\vec w, 0} \setminus \{\vec 0\} \subset D$. The definition of an avoshift says that for all $x \in X$, we have $\follow_X(x|D, \vec 0) = \follow_X(x|C, \vec 0)$

Let $\bar e_d = \vec u$ be any vector such that $\vec u \cdot \vec w = 1$ (which exists since $\mathrm{gcd}(\vec w_1, \ldots, \vec w_d) = 1$). Pick a free generating set $\bar e_1, \ldots, \bar e_{d-1}$ for the group $\partial H_{\vec w, 0}$ (for this, note that the latter is a torsion-free abelian group and thus isomorphic to some $\Z^k$, and $k = d-1$ is easy to see). It is easy to see that the $\bar e_i$ are then a basis of $\Z^d$. Namely, if $\vec v \in \Z^d$ then $\vec v - (\vec v \cdot \vec w) \vec u \in \partial H_{\vec w, 0}$.

Now let $M$ be the matrix with the $\bar e_i$ as columns, i.e.\ $Me_i = \bar e_i$, and let $Y = M^{-1} X$. Then we have $y_{\vec v} = x_{M \vec v}$. By Lemma~\ref{lem:MatrixAvo}, $(C', D') = (M^{-1}C, M^{-1}D)$ is in the avomanual of $Y$. 

Now we observe that
\[ M^{-1} H_{\vec w, 0}^\circ = H_{e_d, \vec 0}^\circ \mbox{ and } M^{-1} \bar H_{\vec w, 0} = \bar H_{e_d, \vec 0} \]
because $H_{\vec w, 0}^\circ$ (resp.\ $\bar H_{\vec w, 0}$) consists of precisely the vectors $\vec v$ whose canonical representation in the basis $\{\bar e_i\}$ uses a positive coefficient for $\vec u$s (resp.\ nonnegative coefficient).
Therefore,
\[ C' \Subset H_{e_d, 0}^\circ, \bar H_{e_d, 0} \setminus \{\vec 0\} \subset D'. \]
Since $C'$ is finite, $C' \subset H_{e_d, 0}^\circ \cap T_{1, k}$ for some $k$.

Define now $Z = Y^{[k]}$. Then in $Z$, the rule $(C', D')$ gives rise to a rule $(C'', D'')$, where
\[ C' \Subset S_1, \bar H_{e_d, 0} \setminus \{\vec 0\} \subset D'. \]

Now we observe that in all this discussion, we just needed $\vec w$ to be sufficiently slanted and $k$ sufficiently large. Thus we can simultaneously apply the discussion to the inversion of $X$: inductive intervals are invariant under the inversion map $\vec v \mapsto -\vec v$, so the inversion $X'$ of $X$ is an avoshift for inductive intervals. Furthermore, the above construction applied to $X'$ gives precisely the inversion $Z'$ of $Z$ when using the same vector $\vec w$ to define the halfspaces.

We conclude that for $\vec w$ sufficiently slanted and $k$ sufficiently large, $Z$ is an IBS as was to be proved.
\end{proof}


\begin{theorem}
Let $X \subset A^{\Z^d}$ be an avoshift for convex sets. Then there exist $M \in \GL(d,\Z)$ and a finite convex set $N$ such that $Z = (MX)^{[N]}$ is an inductive binondeterministic spacetime.
\end{theorem}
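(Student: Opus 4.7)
The plan is to induct on the dimension $d$. In the base case $d = 1$, an avoshift for convex sets is in particular an avoshift for inductive intervals (every singleton inductive interval extension is a singleton convex extension by Lemma~\ref{lem:ExtensionStuff} together with the lemma that inductive intervals are convex), so the preceding lemma gives $k$ such that $X^{[N]}$ with $N = \{0, \ldots, k-1\}$ is a binondeterministic spacetime; in dimension one, IBS coincides with BS (there is no trace condition), so we are done with $M = I$.

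For the inductive step $d \geq 2$, I would first apply the preceding lemma to obtain $M_1 \in \GL(d, \Z)$ and $k$ such that $Y_1 := (M_1 X)^{[N_1]}$, with $N_1 = \{\vec 0^{d-1}\} \times \{0, \ldots, k-1\}$, is a BS. By Lemma~\ref{lem:MatrixAvo} (applied to convex sets, which are preserved by $\GL(d, \Z)$) together with the earlier lemma that convex blockings of convex avoshifts remain avo for convex sets, $Y_1$ is itself an avoshift for convex sets.

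Next I would establish the auxiliary fact that the $(d-1)$-trace $Y_1'$ of any avoshift $Y_1$ for convex sets is again avo for convex sets in $\Z^{d-1}$. Given convex $C \subset \Z^{d-1}$ with $C \cup \{\vec 0\}$ convex, view $C$ as a subset of the slice $S_0 \subset \Z^d$; the ambient avo property yields a finite $C' \subset C$ determining all followers at $\vec 0$ in $Y_1$, and since $C \cup \{\vec 0\}$ lies entirely in $S_0$, the $Y_1'$-followers coincide with the $Y_1$-followers, so the same $C'$ works for the trace. By the induction hypothesis applied to $Y_1'$, there exist $M_2 \in \GL(d-1, \Z)$ and convex $N_2' \subset \Z^{d-1}$ such that $(M_2 Y_1')^{[N_2']}$ is IBS.

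Finally, extend $M_2$ to $\hat M_2 \in \GL(d, \Z)$ fixing $e_d$, set $\hat N_2 := N_2' \times \{0\}$, and take $M := \hat M_2 M_1$ together with $N := N_1 + \hat N_2 = N_2' \times \{0, \ldots, k-1\}$, which is convex. Up to the canonical conjugacy of iterated blockings, $Z := (MX)^{[N]}$ agrees with $(\hat M_2 Y_1)^{[\hat N_2]}$, using that $\hat M_2$ fixes every element of $N_1$. Because $\hat M_2$ does not mix the $e_d$-direction with other directions and $\hat N_2 \subset S_0$, the BS property of $Y_1$ transports through both operations (the nondeterministic CA rules in both directions are conjugated by $M_2$ and then lift through the horizontal blocking to local nondeterministic rules on the new alphabet), so $Z$ is BS; the $(d-1)$-trace of $Z$ is, by the same computation, conjugate to $(M_2 Y_1')^{[N_2']}$, which is IBS by induction, so $Z$ is IBS. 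The main obstacle I expect is precisely this last paragraph of bookkeeping: verifying that iterated blockings realize Minkowski-sum blockings, that rotations fixing $e_d$ and horizontal convex blockings preserve the BS property in both directions, and that the $(d-1)$-trace commutes with both operations. Each point is elementary once the identifications between $S_0 \subset \Z^d$ and $\Z^{d-1}$ are pinned down, but the composed conjugacies must be tracked carefully.
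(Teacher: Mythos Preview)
Your proposal follows precisely the paper's own proof: induct on dimension, apply the preceding lemma to obtain a BS after a basis change and a vertical blocking, observe that the result is still convex-avo so its $(d-1)$-trace is convex-avo, apply the inductive hypothesis to that trace, and then combine the two ``orthogonal'' transformations into a single $M \in \GL(d,\Z)$ and a single convex $N$. The paper compresses all of your steps 2--5 into the single sentence ``Since the blocking and matrix applied there is orthogonal to $M$ and the $k$-blocking, we can combine the two,'' so your explicit unpacking (trace is avo, iterated blockings realize a Minkowski-sum blocking, $\hat M_2$ fixes $N_1$ pointwise, the trace of $Z$ matches $(M_2 Y_1')^{[N_2']}$) is exactly the bookkeeping the paper leaves implicit, and you correctly flag the preservation of the BS property under the horizontal operations as the point requiring care.
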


\begin{proof}
We prove this by induction on dimension. By the previous lemma, we can pick a matrix $M$ and a vertical blocking size $k$ so that $Y = (MX)^{[k]}$ is a binondeterministic spacetime. Since $X$ is an avoshift for convex sets, so is $Y$. Thus, we can apply induction to the $\Z^{d-1}$-trace of $Y$. Since the blocking and matrix applied there is orthogonal to $M$ and the $k$-blocking, we can combine the two. This gives the statement.
\end{proof}

From results on nondeterministic spacetimes, we obtain as immediate corollaries the results from the introduction.

\begin{theorem}
Every convex avoshift has a periodic point.
\end{theorem}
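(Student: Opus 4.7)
The plan is to read this off the previous theorem together with Lemma~\ref{lem:IBSPeriodic}. More precisely, given a convex avoshift $X \subset A^{\Z^d}$, the preceding theorem produces $M \in \GL(d,\Z)$ and a finite convex set $N \Subset \Z^d$ such that $Z = (MX)^{[N]}$ is an inductive binondeterministic spacetime. By Lemma~\ref{lem:IBSPeriodic}, $Z$ contains a totally periodic point $z$, and the task is then to transport $z$ back to a periodic point of $X$.

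For the transport, I would first recall that the $N$-blocking map $\phi_N : MX \to (MX)^{[N]} = Z$ is a topological conjugacy, since $N$ is nonempty (noted just after the definition of $N$-blocking). A conjugacy intertwines the $\Z^d$-actions, so its inverse sends any configuration with period lattice $L \leq \Z^d$ of finite index to a configuration with the same period lattice. Thus from $z \in Z$ totally periodic, I obtain $y = \phi_N^{-1}(z) \in MX$ totally periodic with the same lattice of periods.

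Next, the map $x \mapsto Mx$ from $X$ to $MX$ is a homeomorphism whose effect on the shift action, unwound in Lemma~\ref{lem:MatrixAvo}, is $\sigma_{\vec v}(Mx) = M(\sigma_{M^{-1}\vec v}(x))$. Consequently, if $y = Mx$ is fixed by $\sigma_{\vec v}$ for every $\vec v$ in some finite-index sublattice $L \leq \Z^d$, then $x$ is fixed by $\sigma_{M^{-1}\vec v}$ for every such $\vec v$, i.e.\ $x$ is totally periodic with period lattice $M^{-1} L$, which still has finite index in $\Z^d$ because $M \in \GL(d,\Z)$. Setting $x = M^{-1} y \in X$ therefore gives a totally periodic point of $X$, which in particular is a periodic point, as required.

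There is essentially no hard step here: the content lies entirely in Lemma~\ref{lem:IBSPeriodic} and in the previous theorem, and the only things to check are the standard, trivial facts that block-coding conjugacies and $\GL(d,\Z)$-changes of basis preserve the property of being totally periodic (though they may permute or rescale the period lattice). If anything deserves a line of care, it is the observation that $\GL(d,\Z)$ sends finite-index sublattices of $\Z^d$ to finite-index sublattices, so ``totally periodic'' is indeed preserved by $x \mapsto M^{-1}x$.
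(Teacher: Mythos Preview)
Your proof is correct and follows exactly the paper's approach: the paper's own proof is the single line ``This follows from Lemma~\ref{lem:IBSPeriodic},'' and you have simply spelled out the implicit transport of the periodic point from $Z$ back through the $N$-blocking conjugacy and the $\GL(d,\Z)$ base change to $X$. The additional care you take in verifying that conjugacies and base changes preserve total periodicity is routine and matches what the paper leaves unstated.
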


\begin{proof}
This follows from Lemma~\ref{lem:IBSPeriodic}.
\end{proof}

\begin{theorem}
Let $X$ be a unishift for convex sets. Then $X$ has dense totally periodic points.
\end{theorem}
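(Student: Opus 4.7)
The plan is to mirror the proof of the preceding theorem (convex avoshifts are inductive binondeterministic spacetimes up to a twist), but using the unimanual in place of the avomanual, and to upgrade each step to track that the relevant follower sets have a cardinality that does not depend on the context. By Lemma~\ref{lem:AvomanualPermissive} (the unimanual version), the unimanual $\mathcal{C}$ of a convex unishift is down, monotone up and permissive; since convex unishifts are in particular convex avoshifts, $\mathcal{C}$ handles all singleton convex extensions, and in particular all singleton inductive interval extensions. Thus Theorem~\ref{thm:MainTechnical} applies to $\mathcal{C}$ and yields a single rule $(C, D) \in \mathcal{C}$ handling in parallel every sufficiently slanted minimal rational halfspace extension. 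By definition of the unimanual, the cardinality $n = |\follow_X(x|C, \vec 0)|$ is then the same for all $x \in X$.

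Next I would repeat the rotation-and-blocking step of the preceding theorem: choose $\bar e_d = \vec u$ with $\vec u \cdot \vec w = 1$, complete $\bar e_1, \ldots, \bar e_{d-1}$ to a basis of $\partial H_{\vec w, 0}$, let $M$ be the corresponding basis change and $k$ large enough that $C' = M^{-1}C \subset H_{e_d, 0}^\circ \cap T_{1,k}$, and form $Z = (MX)^{[k]}$. In $Z$, the rule $(C', D')$ becomes a local rule whose neighborhood sits entirely on the slice $S_1$; the image set $F(p)$ is then precisely the set of allowed values at $\vec 0$ on the slice $S_0$ given the context $p$ on $S_1$, which by the unimanual property has constant size $n$ (here I would also need the analogues of Lemma~\ref{lem:MatrixAvo} and the blocking lemma for the unimanual to show that $Z$ is again a convex unishift, so that the same argument works for the inverse direction; these are routine variants of the lemmas already proved for the avomanual). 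Thus $Z$ is a uniform binondeterministic spacetime, and by induction on dimension applied to the $\Z^{d-1}$-trace of $Z$ (itself a convex unishift) we obtain that, after a further rotation and convex blocking orthogonal to the first, the resulting subshift is an inductive uniform binondeterministic spacetime.

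Finally, I would invoke Lemma~\ref{lem:IUBSPeriodic} to conclude that the IUBS has dense totally periodic points, and pull this property back through the topological conjugacies (basis change and convex blocking) to get dense totally periodic points in the original unishift $X$.

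The main obstacle I anticipate is the bookkeeping in the second paragraph: showing that both the basis change $M \in \GL(d,\Z)$ and the convex blocking $[N]$ send unimanuals to unimanuals (so that the unishift property, not just the avoshift property, is preserved at every step and descends to the $\Z^{d-1}$-trace). These mirror the avomanual lemmas proved earlier, but one must keep track of cardinalities of follower sets as well as equalities, which requires slightly more care than in the avoshift setting. Once this preservation is in place, the uniformity of the cellular automaton rule is immediate from the definition of the unimanual, and the rest is a direct application of the already-established structure theorem and Lemma~\ref{lem:IUBSPeriodic}.
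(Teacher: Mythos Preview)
Your proposal is correct and follows essentially the same approach as the paper: the paper's proof is the single line ``This follows from Lemma~\ref{lem:IUBSPeriodic}'', leaving implicit exactly the unishift-to-IUBS upgrade that you spell out. Note that the paper already states the unimanual versions of the blocking and convexity-preservation lemmas (the ``resp.\ unimanual'' and ``resp.\ uni'' clauses), so most of the bookkeeping you anticipate is already in place; only the unimanual analogue of Lemma~\ref{lem:MatrixAvo} is not explicitly stated, and it is indeed routine.
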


\begin{proof}
This follows from Lemma~\ref{lem:IUBSPeriodic}.
\end{proof}

\begin{theorem}
Let $X$ be a unishift for convex sets. Then $X$ has an equal entropy full shift factor.
\end{theorem}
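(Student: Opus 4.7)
My plan is to deduce this theorem from the structure theory in the same way the preceding two theorems are deduced. The key observation is that the chain of constructions used to prove the structure theorem for convex avoshifts actually produces a uniform binondeterministic spacetime when the input is a unishift, because the defining condition of the unimanual (constant cardinality of follower sets) carries through every step.

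First, I would verify the unishift analogue of the preceding structure theorem: if $X$ is a unishift for convex sets, then there exist $M \in \GL(d,\Z)$ and a finite convex set $N$ such that $Z = (MX)^{[N]}$ is an inductive uniform binondeterministic spacetime (IUBS). Since the unimanual is a submanual of the avomanual (satisfying all the same closure properties as shown in Lemma~\ref{lem:AvomanualPermissive}, which explicitly mentions both cases), Theorem~\ref{thm:MainTechnical} applies to it, producing a rule $(C,D)$ in the unimanual that handles in parallel all sufficiently slanted rational halfspace extensions. The additional clause in the definition of the unimanual guarantees that for this rule, $|\follow_X(x|D, \vec 0)|$ is independent of $x \in X$, which after the change of basis by $M$ and the $k$-blocking produces a nondeterministic CA local rule $F$ with $|F(p)|$ constant. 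Running the induction on dimension (on the $\Z^{d-1}$-trace, which inherits the unishift property) and symmetrically for the reverse direction gives IUBS rather than merely IBS.

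Once $Z$ is known to be an IUBS, in particular it is a uniform nondeterministic spacetime (UNS), so Lemma~\ref{lem:UNSFactor} applies and provides an equal entropy full shift factor $\phi : Z \to \Z_k^{\Z^d}$ where $k = |F(p)|$. Since the matrix action by $M \in \GL(d,\Z)$ and the $N$-blocking by a nonempty finite convex set $N$ are both topological conjugacies of subshifts, $X$ is conjugate to $Z$ via some $\psi : X \to Z$, and thus $h(X) = h(Z) = \log k$ by Lemma~\ref{lem:Entropy}. The composition $\phi \circ \psi : X \to \Z_k^{\Z^d}$ is then a factor map onto a full shift of equal entropy.

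The main obstacle is the first step: rigorously checking that the structure theorem upgrades from IBS to IUBS for unishifts. The argument needs to track, at each application of Theorem~\ref{thm:MainTechnical} and each convex blocking step, that the constant-cardinality property of follower sets is inherited. The down, monotone-up and permissive proofs for the unimanual run in strict parallel to those for the avomanual (as indicated in Lemma~\ref{lem:AvomanualPermissive}), and blocking of unimanuals behaves analogously to blocking of avomanuals, so these are routine verifications; the only real content is confirming that the specific rule $(C,D)$ produced by Theorem~\ref{thm:MainTechnical} when applied to the unimanual has uniform follower cardinality, which is immediate since it is a rule of the unimanual by construction.
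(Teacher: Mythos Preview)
Your proposal is correct and takes essentially the same approach as the paper, whose own proof is the single line ``This follows from Lemma~\ref{lem:UNSFactor}''; you have simply unpacked the implicit step that for unishifts the structure theorem yields a \emph{uniform} nondeterministic spacetime, because the rule produced by Theorem~\ref{thm:MainTechnical} lies in the unimanual. One small imprecision: the matrix action by $M \in \GL(d,\Z)$ is not a conjugacy of $\Z^d$-subshifts in the strict sense (it intertwines $\sigma_{\vec v}$ with $\sigma_{M\vec v}$), but since full shifts are invariant under this action and entropy is preserved, the conclusion that $X$ factors onto an equal-entropy full shift is unaffected.
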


\begin{proof}
This follows from Lemma~\ref{lem:UNSFactor}.
\end{proof}

\section{The natural measure of a unishift}
\label{sec:Measure}

Finally, we explain why unishifts (almost by definition) admit a natural measure.

\begin{theorem}
\label{thm:NaturalMeasure}
Equation~\ref{eq:unimeasure} defines a full-support measure of maximal entropy for unishifts.
\end{theorem}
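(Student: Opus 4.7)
The plan is to define $\mu$ first on patterns with convex shape using Equation~\ref{eq:unimeasure}, extend to arbitrary finite patterns by marginalization, and then verify the consistency, invariance, support, and entropy claims. Concretely, for any finite $D \Subset \Z^d$ and $p \in X|D$, pick a convex set $C \supset D$ (e.g.\ the convex hull of $D$) and set
\[ \mu(p) = \frac{\#\{q \in X|C \;|\; q|D = p\}}{\#X|C}. \]
For $D$ itself convex, this reduces to $\mu(p) = 1/\#X|D$, matching Equation~\ref{eq:unimeasure}. Once the resulting family of numbers is shown to satisfy the single-cell additivity condition from Section~\ref{sec:Definitions}, it defines a Borel probability measure on $X$.

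The main technical step is well-definedness of $\mu(p)$, i.e.\ independence from the choice of $C$. I would take two convex $C_1 \subset C_2$ both containing $D$, and exploit the fact (Lemma~3.10 of \cite{Sa22d}, already invoked in Lemma~\ref{lem:ConvexManuals}) that $C_1$ may be enlarged to $C_2$ by successively adding single cells while keeping each intermediate set convex. At each such step $D_i \to D_{i+1} = D_i \cup \{\vec u_i\}$, the unishift property says that every pattern in $X|D_i$ has the same number $k_i$ of extensions to $X|D_{i+1}$. Hence $\#X|C_2 = \#X|C_1 \cdot \prod_i k_i$, and for every $r \in X|C_1$ with $r|D = p$ the number of extensions in $X|C_2$ is the same $\prod_i k_i$; the ratio defining $\mu(p)$ is therefore unchanged. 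The single-cell additivity condition for $\mu$ and shift-invariance of $\mu$ then follow directly: for the former, taking a convex $C$ containing $D \cup \{\vec v\}$ and marginalizing over the extension symbol telescopes to $\mu(p)$; for the latter, $\#X|C = \#X|(\vec u + C)$ by shift-invariance of $X$.

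Full support is immediate: if $p \sqsubset X$ then the numerator $\#\{q \in X|C \;|\; q|D = p\}$ is at least $1$, so $\mu(p) > 0$. For maximal entropy, apply the F\o{}lner sequence of cubes $F_n = [0,n-1]^d$, which are convex, so $\mu$ is uniform on $X|F_n$ and
\[ \sum_{p \in X|F_n} -\mu(p) \log \mu(p) = \log \#X|F_n. \]
Dividing by $n^d$ and letting $n \to \infty$ gives $h_\mu(X) = h(X)$, so $\mu$ achieves the topological entropy and is a measure of maximal entropy by the variational principle. The main obstacle is really just the consistency step; once convex enlargement by single cells and the unishift property are combined, everything else is bookkeeping.
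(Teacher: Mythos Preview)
Your proposal is correct and follows essentially the same approach as the paper: the entropy computation via the convex F\o{}lner sequence of cubes is identical, and the construction of $\mu$ with its consistency and shift-invariance is precisely what the paper delegates to \cite[Theorem~5.5]{Sa22d}, which you have spelled out directly. The only minor point is that for well-definedness you should compare two arbitrary convex supersets $C_1, C_2 \supset D$ (not just nested ones), but this is immediate by passing through a common finite convex superset such as $\CHull(C_1 \cup C_2) \cap \Z^d$.
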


\begin{proof}
Let $X$ be a unishift on convex sets. The proof of \cite[Theorem~5.5]{Sa22d} shows that Equation~\ref{eq:unimeasure} defines a shift-invariant measure $\mu$ on $X$. Full support is obvious.

For the entropy, since $[0,n-1]^d]$ is convex, $\mu(p) = 1/\# X|[0,n-1]^d$ for all $p \in X|[0,n-1]^d$. Thus
\[ -\mu(p) \log \mu(p) = (\log (\# X|[0,n-1]^d)) / \# X|[0,n-1]^d) \]
so
\[ \frac{\sum_{p \in X|[0,n-1]^d} -\mu(p) \log \mu(p)}{n^d} = \frac{\log \# X|[0, n-1]^d}{n^d}, \]
thus the topological and measure-theoretic entropies coincide.
\end{proof}

\section{Questions}

There is a missing entry in Table~\ref{tab:Properties}:

\begin{question}
Does every avoshift factor onto any full shift of lower (or even equal) entropy?
\end{question}

This is true in dimension $1$ \cite[Theorem 5.5.8]{LiMa95} (where avoshifts are just the SFTs).

In \cite{Sa24} it was shown that it is decidable whether a given SFT is an avoshift for inductive intervals. Here, we assume the avo property on convex sets, and the same proof does not seem to work.

\begin{question}
Given an SFT $X \subset A^{\Z^d}$, is it decidable whether $X$ is avo for convex sets?
\end{question}

In \cite{Sa24}, it was shown that if a subshift is avo for inductive intervals, then it is also SFT on inductive intervals, in the sense that there is a finite set of forbidden patterns such that a locally valid configuration on an inductive interval is also globally valid. We do not have a similar result for avoshifts on convex sets.

\begin{question}
If an SFT $X \subset A^{\Z^d}$ is avo for convex sets, is it SFT on convex sets? 
\end{question}

We have
\[ \mbox{convex-avo} \overset{*}\subset \mbox{IBS} \subset \mbox{II-avo} \]
where II means inductive intervals, and $\overset{*}\subset$ means inclusion up to convex blocking and a base change of $\Z^d$. The first inclusion is the main structure result in this paper, and the second inclusion is not difficult to show (of course, one should use the same ordering of the dimensions). Nevertheless, we feel that we do not have a clear picture of what the connections are between being avo for inductive intervals, being avo for convex sets, and being obtained inductively from nondeterministic cellular automata in the sense of IBS.

The theory in \cite{Sa24} was built more generally for polycyclic groups. However, the theory developed in the present article depends on geometric properties of the group $\Z^d$, and we cannot prove similar results even on the three-dimensional Heisenberg group. 

\begin{question}
Does every avoshift on a polycyclic group have a periodic point?
\end{question}

\begin{question}
Does every unishift on a polycyclic group have dense periodic points?
\end{question}

\begin{question}
Does every unishift on a polycyclic group have an equal entropy full shift factor?
\end{question}

Theorem~\ref{thm:NaturalMeasure} shows that unishifts have a measure of maximal entropy with full support. This measure need not be ergodic, since a disjoint union of two copies of the same unishift (one with renamed symbols) is clearly a unishift, and the measure we obtain is an affine combination of the natural measures of the two pieces.

\begin{question}
Can a unishift have infinitely many ergodic measures of maximal entropy?
\end{question}

\begin{question}
What can be said about shift-invariant measures on avoshifts?
\end{question}

\bibliographystyle{plain}
\bibliography{../../../bib/bib}{}

\end{document}